\makeatletter\patchcmd{\@startsection}{\@afterindenttrue}{\@afterindentfalse}{}{}\makeatother    
\patchcmd{\section}{\scshape}{\bfseries}{}{}\makeatletter\renewcommand{\@secnumfont}{\bfseries}\makeatother           
\providecommand \@dotsep{5} \def\listtodoname{List of Todos} \def\listoftodos{\@starttoc{tdo}\listtodoname} \makeatother 
\DeclareRobustCommand{\gobblefour}[4]{}
\DeclareSymbolFont{sfoperators}{OT1}{bch}{m}{n}
\DeclareSymbolFontAlphabet{\mathsf}{sfoperators}
\def\operator@font{\mathgroup\symsfoperators}\makeatother 
\DeclareSymbolFont{cmletters}{OML}{cmm}{m}{it}
\DeclareSymbolFont{cmsymbols}{OMS}{cmsy}{m}{n}
\DeclareSymbolFont{cmlargesymbols}{OMX}{cmex}{m}{n}
\DeclareMathSymbol{\myjmath}{\mathord}{cmletters}{"7C}
\let\jmath\myjmath 
\DeclareMathSymbol{\myalpha}{\mathord}{cmletters}{"0B}
\let\alpha\myalpha 
\DeclareMathSymbol{\mybeta}{\mathord}{cmletters}{"0C} \let\beta\mybeta
\DeclareMathSymbol{\mygamma}{\mathord}{cmletters}{"0D} \let\gamma\mygamma
\DeclareMathSymbol{\mydelta}{\mathord}{cmletters}{"0E} \let\delta\mydelta
\DeclareMathSymbol{\myepsilon}{\mathord}{cmletters}{"0F}
\let\epsilon\myepsilon
\DeclareMathSymbol{\myzeta}{\mathord}{cmletters}{"10} \let\zeta\myzeta
\DeclareMathSymbol{\myeta}{\mathord}{cmletters}{"11} \let\eta\myeta
\DeclareMathSymbol{\mytheta}{\mathord}{cmletters}{"12} \let\theta\mytheta
\DeclareMathSymbol{\myiota}{\mathord}{cmletters}{"13} \let\iota\myiota
\DeclareMathSymbol{\mykappa}{\mathord}{cmletters}{"14} \let\kappa\mykappa
\DeclareMathSymbol{\mylambda}{\mathord}{cmletters}{"15} \let\lambda\mylambda
\DeclareMathSymbol{\mymu}{\mathord}{cmletters}{"16} \let\mu\mymu
\DeclareMathSymbol{\mynu}{\mathord}{cmletters}{"17} \let\nu\mynu
\DeclareMathSymbol{\myxi}{\mathord}{cmletters}{"18} \let\xi\myxi
\DeclareMathSymbol{\mypi}{\mathord}{cmletters}{"19} \let\pi\mypi
\DeclareMathSymbol{\myrho}{\mathord}{cmletters}{"1A} \let\rho\myrho
\DeclareMathSymbol{\mysigma}{\mathord}{cmletters}{"1B} \let\sigma\mysigma
\DeclareMathSymbol{\mytau}{\mathord}{cmletters}{"1C} \let\tau\mytau
\DeclareMathSymbol{\myupsilon}{\mathord}{cmletters}{"1D}
\let\upsilon\myupsilon
\DeclareMathSymbol{\myphi}{\mathord}{cmletters}{"1E} \let\phi\myphi
\DeclareMathSymbol{\mychi}{\mathord}{cmletters}{"1F} \let\chi\mychi
\DeclareMathSymbol{\mypsi}{\mathord}{cmletters}{"20} \let\psi\mypsi
\DeclareMathSymbol{\myomega}{\mathord}{cmletters}{"21} \let\omega\myomega
\DeclareMathSymbol{\myvarepsilon}{\mathord}{cmletters}{"22}\let\varepsilon\myvarepsilon
\DeclareMathSymbol{\myvartheta}{\mathord}{cmletters}{"23}
\let\vartheta\myvartheta
\DeclareMathSymbol{\myvarpi}{\mathord}{cmletters}{"24} \let\varpi\myvarpi
\DeclareMathSymbol{\myvarrho}{\mathord}{cmletters}{"25} \let\varrho\myvarrho
\DeclareMathSymbol{\myvarsigma}{\mathord}{cmletters}{"26}
\let\varsigma\myvarsigma
\DeclareMathSymbol{\myvarphi}{\mathord}{cmletters}{"27} \let\varphi\myvarphi
\newtheorem{thmA}{Theorem} 
\newaliascnt{conjA}{thmA}\aliascntresetthe{conjA}
\newaliascnt{propA}{thmA}\newtheorem{propA}[propA]{Proposition}\aliascntresetthe{propA}
\newaliascnt{corA}{thmA}\aliascntresetthe{corA}
\theoremstyle{plain}
\newtheorem{thm}{Theorem}[section] 
\newaliascnt{lemma}{thm}\newtheorem{lemma}[lemma]{Lemma}\aliascntresetthe{lemma}
\newaliascnt{cor}{thm}\newtheorem{cor}[cor]{Corollary}\aliascntresetthe{cor}
\newaliascnt{prop}{thm}\newtheorem{prop}[prop]{Proposition}\aliascntresetthe{prop}
\newtheorem*{notation}{Notation}
\newtheorem*{thm*}{Theorem}
\newtheorem*{lem*}{Lemma}
\newtheorem*{conj*}{Conjecture}
\newtheorem*{cor*}{Corollary}
\newtheorem*{problem*}{Problem}
\theoremstyle{definition}
\newaliascnt{df}{thm}\newtheorem{df}[df]{Definition}\aliascntresetthe{df}
\newaliascnt{rem}{thm}\newtheorem{rem}[rem]{Remark}\aliascntresetthe{rem}
\newaliascnt{ex}{thm}\newtheorem{ex}[ex]{Example}\aliascntresetthe{ex}
\newtheorem*{df*}{Definition}
\newtheorem*{ex*}{Example}
\newtheorem*{rem*}{Remark}
\patchcmd{\@startsection}{\@afterindenttrue}{\@afterindentfalse}{}{}             
\patchcmd{\part}{\bfseries}{\bfseries\LARGE}{}{}
\patchcmd{\section}{\scshape}{\bfseries}{}{}\renewcommand{\@secnumfont}{\bfseries} 
\patchcmd{\@settitle}{\uppercasenonmath\@title}{\large}{}{}
\patchcmd{\@setauthors}{\MakeUppercase}{}{}{}
\DeclareFontFamily{OT1}{pzc}{}                                
\DeclareFontShape{OT1}{pzc}{m}{it}{<-> s * [1.10] pzcmi7t}{}
\DeclareMathAlphabet{\mathpzc}{OT1}{pzc}{m}{it}
\DeclareSymbolFont{sfoperators}{OT1}{bch}{m}{n} \DeclareSymbolFontAlphabet{\mathsf}{sfoperators} \makeatletter\def\operator@font{\mathgroup\symsfoperators}\makeatother 
\DeclareSymbolFont{cmletters}{OML}{cmm}{m}{it}     
\DeclareMathOperator{\Spec}{Spec}
\DeclareMathOperator{\MSpec}{MSpec}
\DeclareMathOperator{\MSch}{MSch}
\DeclareMathOperator{\Cong}{Cong}
\DeclareMathOperator{\SCong}{SCong}
\DeclareMathOperator{\im}{im}
\DeclareMathOperator{\congker}{congker}
\newcommand\N{{\mathbb N}}
\newcommand\Z{{\mathbb Z}}
\newcommand\Q{{\mathbb Q}}
\newcommand\R{{\mathbb R}}
\newcommand\C{{\mathbb C}}
\newcommand\F{{\mathbb F}}
\newcommand\G{{\mathbb G}}
\newcommand\PL{{\mathbb P}}
\newcommand\T{{\mathbb T}}
\newcommand\A{{\mathbb A}}
\title{Strong congruence spaces and dimension in $\F_1$-geometry}
\author{Manoel Jarra}
\address{\rm Manoel Jarra, University of Groningen, the Netherlands, and IMPA, Rio de Janeiro, Brazil}
\email{{manoel.jarra@impa.br}}
\begin{document}

\begin{abstract}
We introduce strong congruence spaces, which are topological spaces that provide a useful concept of dimension for monoid schemes. We study their properties and show that, given a toric monoid scheme over an algebraically closed basis, its strong congruence space and the complex toric variety associated to its fan have the same dimension.
\end{abstract}

\maketitle

\begin{small} \tableofcontents \end{small}

\VerbatimFootnotes   
\thispagestyle{empty} 

\section*{Introduction}
\label{introduction}

Dimension theory for $\F_1$-geometry has not yet been developed in a satisfactory way. The main problem is that the underlying topological space of a monoid scheme can have too few points, so it fails to reflect the expected dimension. For example, if $\F$ is a pointed group, the underlying topological space of the $n$-torus $\T^n_{\F}$ is a singleton, so its Krull dimension (in the sense of \cite[Def. 5.10.1]{stacks-project}) is zero, while the dimension of the complex variety $\T^n_\C$ is $n$. We address this issue by attaching a new topological space to a monoid scheme $X$ over a basis $k$ whose Krull dimension behaves as expected if $k$ is an algebraically closed pointed group.

Throughout this text all monoids are assumed to be commutative. A \textit{congruence} on a pointed monoid $A$ is an equivalence relation $\mathfrak{c} \subseteq A\times A$ satisfying $(\lambda a, \lambda b) \in \mathfrak{c}$ whenever $(a, b) \in \mathfrak{c}$ and $\lambda \in A$, which implies that the projection $A \twoheadrightarrow A / \mathfrak{c}$ induces a natural structure of pointed monoid on the quotient. We say that a congruence $\mathfrak{c}$ is $\textit{prime}$ if $A / \mathfrak{c}$ is integral. We refer the reader to \autoref{preliminaries} for basic facts. Spaces of prime congruences were considered in \cite{Berkovich12} for monoids, in \cite{Deitmar13} for \textit{sesquiads} and more recently in \cite{Lorscheid-Ray23} for monoid schemes. The set of prime congruences of a  pointed monoid $A$ endowed with the topology generated by the open sets 
\[
U_{a, b} := \{ \mathfrak{c} \mid (a, b) \notin \mathfrak{c} \}
\]
is the \textit{congruence space} of $A$, denoted by $\Cong A$. For a monoid scheme $X = \bigcup \; \MSpec A_i$, the \textit{congruence space} of $X$ is constructed by gluing the affine pieces $\Cong A_i$, such that the result $X^\textup{cong}$ is independent of the choice of affine covering. 

We introduce the term \textit{domain} for a pointed monoid that can be embedded as a multiplicative submonoid into a field. \autoref{intrinsic domains} gives an intrinsic characterization of domains, without mention to fields. A prime congruence $\mathfrak{c}$ on a pointed monoid $A$ is $\textit{strong}$ if $A / \mathfrak{c}$ is a domain. If $A$ is a $k$-algebra, we say that $\mathfrak{c}$ is a \textit{$k$-congruence} if it is strong and the composition $k \rightarrow A \rightarrow A / \mathfrak{c}$ is injective. The set of $k$-congruences of $A$ with the topology induced from $\Cong A$ is denoted by $\SCong_k A$. Analogous to the case of prime congruences, we globalize this construction to the case of a $k$-scheme $X$ to define $\SCong_k X$, the \textit{strong $k$-congruence space} of $X$. 

An extension of pointed groups $\F \hookrightarrow \F'$ is \textit{algebraic} if for every $\alpha \in \F'$ there exists $n \geq 1$ such that $\alpha^n \in \F$. A pointed group $\F$ that is also a domain is called \textit{algebraically closed} if there is no non-trivial algebraic extension $\F \hookrightarrow \F'$ with $\F'$ being a pointed group and a domain. The following result, which is \autoref{thm-toric dimension}, shows that the strong congruence space of a toric monoid scheme over an algebraically closed basis reflects its expected dimension.

\begin{thmA}
Let $\F$ be an algebraically closed pointed group and $\Sigma$ a fan in a finite dimensional $\R$-vector space. Let $X(\Sigma)$ be the toric monoid scheme associated to $\Sigma$ and $Y$ the strong $\F$-congruence space of $X(\Sigma) \times_{\F_1}\MSpec \F$. Then $Y$ is a catenary space whose Krull dimension is the same as that of the complex toric variety $X(\Sigma)_\C$.
\end{thmA}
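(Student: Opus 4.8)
The plan is to reduce everything to an explicit classification of the strong $\F$-congruences on each affine chart and then to exhibit a dimension function on $Y$. First I would record that the complex toric variety has dimension $n := \dim_\R N_\R$, where $N_\R$ is the ambient space of $\Sigma$: the dense torus $T_N \cong (\C^*)^n$ corresponds to the cone $\{0\}$, which is a face of every cone of $\Sigma$, so $\dim X(\Sigma)_\C = n$ irrespective of $\Sigma$. Thus the target number is $n$, and it suffices to prove that $Y$ is catenary with $\dim Y = n$.

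Next I would analyse a single chart. Writing $S = \sigma^\vee \cap M$ for $\sigma \in \Sigma$, the base change is $A = \F[S]$, whose nonzero elements are $g\chi^s$ with $g \in \F^*$ and $s \in S$. Given a strong $\F$-congruence $\mathfrak{c}$ with quotient $B = A/\mathfrak{c}$, the elements sent to $0$ form a prime monoid ideal, hence equal $\F[S\setminus F_\tau]$ for a unique face $\tau$ of $\sigma$, where $F_\tau = \tau^\perp \cap S$. On $F_\tau$ the quotient is cancellative, so $B^*$ is the image of $\F^*\times F_\tau$ in a quotient $\Gamma_\tau/H$ of its Grothendieck group $\Gamma_\tau = \F^*\times M(\tau)$, with $M(\tau) = \tau^\perp\cap M$; and the $\F$-congruence condition is exactly $H\cap\F^* = 1$. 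One checks that $\mathfrak{c}$ is then completely determined by the pair $(\tau,H)$.

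The main obstacle, and the place where the hypothesis that $\F$ is algebraically closed is indispensable, is to pin down which subgroups $H$ occur. I claim the image $L$ of $H$ under the projection $\Gamma_\tau\to M(\tau)$ must be saturated (that is, $M(\tau)/L$ is torsion-free), so that $H$ is the graph of a homomorphism $L\to\F^*$. Indeed, any further torsion would produce $\bar t\in B^*$ with $\bar t\notin\F^*$ but $\bar t^{\,d}\in\F^*$ for some $d$; then $\F^*\langle\bar t\rangle\cup\{0\}$ is a pointed group, a domain (as a submonoid of the domain $B$), and a nontrivial algebraic extension of $\F$, contradicting algebraic closedness. In positive characteristic the same conclusion holds, since a $d$ prime to the characteristic is excluded by this argument while $p$-torsion cannot embed in any field at all. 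This classification is the heart of the proof; everything after it is bookkeeping.

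With the classification in hand I would introduce the function $d(\tau,H) = (n - \dim \tau) - \operatorname{rank} L$ and show it drops by exactly $1$ under each immediate specialization: such a step either enlarges $\tau$ to a face covering it or enlarges $L$ by a rank-one saturated extension, and the compatibility $L\cap M(\tau') \subseteq L'$ forces that no step lowers $d$ by more than one and that intermediate steps can always be interpolated. Since the generic point $(\{0\},1)$ has $d = n$ and the torus-fixed points have $d = 0$, each chart is catenary of dimension $n$. Finally, $d$ is intrinsic, being the length of the longest specialization chain issuing from a point, so it glues to a dimension function on all of $Y$; as every chart contains the torus locus $\tau = \{0\}$, which already has dimension $n$, I conclude that $Y$ is catenary with $\dim Y = n = \dim X(\Sigma)_\C$.
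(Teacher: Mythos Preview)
Your approach is essentially the paper's: reduce to the affine charts $\F[S_\sigma]$, classify the strong $\F$-congruences by pairs (face of $\sigma$, saturated sublattice equipped with a character to $\F^\times$), invoke algebraic closedness of $\F$ to force the saturation, introduce a numerical rank function, and show it changes by exactly $1$ along covering relations before gluing. Your $d(\tau,H)$ is precisely $n-\bigl(\mathfrak{N}(\mathfrak{c})+\mathfrak{T}(\mathfrak{c})\bigr)$ in the paper's notation, and your ``nontrivial algebraic extension'' argument for saturation is equivalent to the paper's root-counting argument via the intrinsic characterisation of algebraically closed pointed groups.

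The one point where you understate the difficulty is the step you dismiss as ``bookkeeping''. The claim that an immediate specialisation drops $d$ by exactly $1$ is in fact the longest and most delicate part of the paper's argument (the two cases in \autoref{dimension - affine case}). When the face strictly grows from $\tau$ to $\tau'$, the sublattice $L$ need not simply persist: one first has to rule out $L' \supsetneq L\cap M(\tau')$ by interpolating, then pass to the quotient by $L'$, and finally verify that the drop in $\operatorname{rank} M(\tau)$ and the possible drop in $\operatorname{rank} L$ combine to give exactly $1$, which requires a further interpolation argument using an auxiliary strongly convex cone in $\widetilde M/H_{\mathfrak s}$. Your one-line summary is the right plan, but it hides essentially all of the work. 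The aside about positive characteristic is unnecessary and slightly misplaced---$\F$ is a pointed group, not a field, and your extension argument already goes through without any characteristic hypothesis---so you can simply delete it.
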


\subsection*{Relation to classical scheme theory} 
Let $R \mapsto R^\bullet$ be the forgetful functor from rings to pointed monoids. It has a left adjoint, given by $A \mapsto A_\Z := \Z[A] / \langle 1_\Z 0_A \rangle$ on objects. The inclusion 
\begin{align*}
A & \longrightarrow A_\Z^\bullet\\
a & \longmapsto 1_\Z  a
\end{align*}
induces a continuous map $p: \Spec (A_\Z) \rightarrow \MSpec A$ that sends a prime ideal $\mathfrak{p}$ of $\A_\Z$ to the prime ideal $\mathfrak{p} \cap A$ of $A$. The inclusion also induces a map $\gamma: \Spec (A_\Z) \rightarrow \SCong A$, such that $p$ factorizes through it, given by 
\[
\mathfrak{p} \mapsto \gamma(\mathfrak{p}) := \{(a, b) \in A \times A \mid \overline{a} = \overline{b} \in A_\Z / \mathfrak{p}\}.
\]

The construction above is globalized as follows: let $Y \mapsto Y^\bullet$ be the forgetful functor from Grothendieck schemes to monoidal spaces. Let $X$ be a monoid scheme and $\{\MSpec A_i\}_{i \in I}$ an affine open covering of $X$. Let $X_\Z = \textup{colim} \Spec(A_{i, \Z})$ be the base extension of $X$ to $\Z$, which is a Grothendieck scheme that does not depend on the choice of the covering $\{\MSpec A_i\}_{i \in I}$. There exists a morphism of monoidal spaces $X_\Z^\bullet \rightarrow X$ generalizing the map $p$ above (see \autoref{base extension to rings}).

\begin{thmA}
\label{intro factorization}
The underlying continuous map of the morphism of monoidal spaces $X_\Z^\bullet \rightarrow X$ factorizes through a surjective continuous map $\gamma: X_\Z \rightarrow \SCong X$ characterized by the property that if $U$ is an affine open of $X$, then $\gamma(U_\Z) \subseteq \SCong U$ and $\gamma|_{U_\Z}: U_\Z \rightarrow \SCong U$ is induced by the inclusion $\Gamma U \rightarrow (\Gamma U_\Z)^\bullet$ (see \autoref{factorization from schemes} and \autoref{XZ to SCong X is surjective}).
\end{thmA}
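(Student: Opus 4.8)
The plan is to construct $\gamma$ affine-locally, glue the local maps by a naturality square, identify the factorizing map as a natural support projection $\SCong X \to X$, and finally obtain surjectivity by embedding abstract domains into honest fields via the adjunction $(-)_\Z \dashv (-)^\bullet$. First I would treat the affine case. For a pointed monoid $A$ and $\mathfrak p \in \Spec(A_\Z)$, set $\gamma_A(\mathfrak p) := \congker\bigl(A \to (A_\Z/\mathfrak p)^\bullet\bigr)$, which is exactly the relation $\{(a,b) \mid \overline{1_\Z a} = \overline{1_\Z b} \text{ in } A_\Z/\mathfrak p\}$ from the introduction. Since $\mathfrak p$ is prime, $A_\Z/\mathfrak p$ is an integral domain, so $(A_\Z/\mathfrak p)^\bullet$ embeds, preserving $0$, into the multiplicative monoid of its fraction field; as $A/\gamma_A(\mathfrak p)$ is the image of $A$ inside $(A_\Z/\mathfrak p)^\bullet$, it is a submonoid of a domain, hence a domain, so $\gamma_A(\mathfrak p) \in \SCong A$. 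Continuity is immediate, since $\gamma_A^{-1}(U_{a,b}) = \{\mathfrak p \mid 1_\Z a - 1_\Z b \notin \mathfrak p\} = D(1_\Z a - 1_\Z b)$. Finally the support map $\pi_A : \SCong A \to \MSpec A$, $\mathfrak c \mapsto \{a \mid (a,0_A) \in \mathfrak c\}$, is well defined (integrality of the quotient makes this set a prime ideal) and continuous (with $\pi_A^{-1}(D(f)) = U_{f,0}$), and a one-line computation gives $\pi_A(\gamma_A(\mathfrak p)) = \mathfrak p \cap A = p(\mathfrak p)$; so $p = \pi_A \circ \gamma_A$.

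Next I would prove the naturality that glues everything. For a monoid morphism $\phi : A \to B$, write $\SCong\phi : \SCong B \to \SCong A$ for the pullback $\mathfrak c \mapsto \phi^{-1}(\mathfrak c)$ and $\phi_\Z : A_\Z \to B_\Z$ for the induced ring map. A direct check shows the square with top $\gamma_B$, bottom $\gamma_A$, left $\Spec(\phi_\Z)$, and right $\SCong\phi$ commutes: for $\mathfrak q \in \Spec(B_\Z)$ both composites consist of the pairs $(a,a')$ with $\overline{1_\Z\phi(a)} = \overline{1_\Z\phi(a')}$ in $B_\Z/\mathfrak q$, using $\phi_\Z(1_\Z a) = 1_\Z\phi(a)$ and the embedding $A_\Z/\phi_\Z^{-1}(\mathfrak q) \hookrightarrow B_\Z/\mathfrak q$. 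Now both $X_\Z$ and $\SCong X$ are glued from the same covering $\{A_i\}$ along transition maps that are, on each side, the $\Spec$ and the pullback attached to the localizations $A_i \to C$ defining the overlaps; applying the naturality square to these localizations shows that $\gamma_{A_i}$ and $\gamma_{A_j}$ restrict to the same map on each common affine piece. Hence the $\gamma_{A_i}$ glue to a continuous $\gamma : X_\Z \to \SCong X$ satisfying $\gamma(U_\Z) \subseteq \SCong U$ with $\gamma|_{U_\Z}$ induced by $\Gamma U \to (\Gamma U_\Z)^\bullet$ on every affine open $U$, and this local description determines $\gamma$ uniquely because the affine pieces cover. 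The same naturality glues the $\pi_{A_i}$ to $\pi : \SCong X \to X$, and the affine identity upgrades to $p = \pi \circ \gamma$, the asserted factorization.

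The hard part is surjectivity, the only step that genuinely exploits the domain hypothesis to manufacture a point of $X_\Z$. Affine-locally, take $\mathfrak c \in \SCong A$, so $A/\mathfrak c$ is a domain; by the intrinsic characterization of \autoref{intrinsic domains} it embeds into a field $K$, and the pointed-monoid map $A \to A/\mathfrak c \hookrightarrow K^\bullet$ transposes under the adjunction to a ring homomorphism $A_\Z \to K$. Its kernel $\mathfrak p$ is prime (as $K$ is a domain), and unwinding definitions through the embedding $A_\Z/\mathfrak p \hookrightarrow K$ gives $\gamma_A(\mathfrak p) = \{(a,b) \mid \overline a = \overline b \text{ in } A/\mathfrak c\} = \mathfrak c$; thus $\gamma_A$ is surjective. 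Globally, $\SCong X = \bigcup_i \SCong A_i$ is an open cover, so any $\mathfrak c \in \SCong X$ lies in some $\SCong A_i$; the affine result yields $\mathfrak p \in \Spec(A_{i,\Z})$ with $\gamma_{A_i}(\mathfrak p) = \mathfrak c$, and viewing $\mathfrak p$ in $X_\Z$ via the open immersion $\Spec(A_{i,\Z}) \hookrightarrow X_\Z$ gives $\gamma(\mathfrak p) = \mathfrak c$ by the local description. The one delicate point is that $K$ is not canonical, so the production of preimages is non-constructive; but since only existence of a preimage is needed — and we never assert injectivity of $\gamma$ — the choices cause no compatibility issues.
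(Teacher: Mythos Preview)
Your construction of $\gamma$ and the factorization through $\pi : \SCong X \to X$ matches the paper's \autoref{factorization from schemes} closely: both define $\gamma$ affine-locally via congruence kernels, check continuity on subbasic opens $U_{a,b}$, and glue by naturality over localizations (the paper makes the overlap step explicit using \autoref{principal intersecion}, which your phrase ``the localizations $A_i \to C$ defining the overlaps'' is implicitly invoking).

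Your surjectivity argument, however, takes a shorter route than the paper's. You observe that if $\mathfrak c \in \SCong A$ then $A/\mathfrak c$ is, by the very \emph{definition} of a domain, a submonoid of some $K^\bullet$ (you cite \autoref{intrinsic domains}, but that is unnecessary here; Definition~\ref{df-domain} already gives the embedding), and then the adjunction $(-)_\Z \dashv (-)^\bullet$ transposes $A \to K^\bullet$ to a ring map $A_\Z \to K$ whose kernel is a prime preimage of $\mathfrak c$. The paper instead deduces surjectivity as \autoref{XZ to SCong X is surjective}, a corollary of the more general \autoref{image = SCong_k}: taking $k = \F_1 \subseteq \Q^\bullet$ there gives $\gamma(X_\Q) = \SCong_{\F_1} X = \SCong X$, and surjectivity on $X_\Z$ follows since $X_\Q \to X_\Z \to \SCong X$ commutes. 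The paper's detour is natural in context because \autoref{image = SCong_k} is needed independently, but your argument is the most direct proof of the stated theorem and bypasses the closed-subscheme and tensor-product machinery entirely.
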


We have a description of the strong $k$-congruence space of a $k$-scheme in terms of a map coming from a Grothendieck scheme.

\begin{propA}
Let $k$ be a domain, $X$ a $k$-scheme and $F$ a field containing $k$ as a multiplicative submonoid. Let $\gamma_F$ be the composition $(X_\Z \times \Spec F)^\bullet \rightarrow X_\Z^\bullet \rightarrow \SCong X$. Then there exists a closed subscheme $Z$ of $X_\Z \times \Spec F$ whose image by $\gamma_F$ is $\SCong_k X$ (see \autoref{image = SCong_k}).
 \end{propA}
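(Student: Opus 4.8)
The plan is to exhibit $Z$ explicitly as the locus on which the two monoid maps $k \to \mathcal O_{X_\Z \times \Spec F}$ — one coming from the $k$-algebra structure of $X$, the other from the inclusion $k \hookrightarrow F$ — coincide, and then to verify two inclusions. I would work on an affine open $U = \MSpec A$ of $X$, so that $A$ is a $k$-algebra and $U_\Z \times \Spec F = \Spec A_F$ with $A_F := A_\Z \otimes_\Z F$. Define the ideal $I_U := \langle \lambda \otimes 1 - 1 \otimes \lambda \mid \lambda \in k \rangle \subseteq A_F$, so that $A_F / I_U = A_\Z \otimes_{k_\Z} F$. Since the two maps $k \to A_F$ are induced by global structure morphisms, the $I_U$ agree on overlaps and glue to a quasi-coherent ideal sheaf cutting out a closed subscheme $Z \subseteq X_\Z \times \Spec F$. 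By the affine-local characterization of $\gamma$ in \autoref{intro factorization}, it then suffices to prove $\gamma_F\bigl(\Spec(A_\Z \otimes_{k_\Z} F)\bigr) = \SCong_k A$ for each such $U$.

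For the inclusion $\gamma_F(Z) \subseteq \SCong_k X$, let $\mathfrak q$ be a prime of $A_F$ containing $I_U$. Then $A_F/\mathfrak q$ is an integral domain, so the monoid map $A \to A_F/\mathfrak q$, $a \mapsto a \otimes 1$, has image inside a field and its congruence kernel $\gamma_F(\mathfrak q)$ is a strong prime congruence. Moreover on $Z$ one has $\lambda \otimes 1 = 1 \otimes \lambda$ in $A_F/\mathfrak q$ for every $\lambda \in k$; since $F \to A_F/\mathfrak q$ is injective (a field mapping to a nonzero ring), distinct elements of $k$ have distinct images $\lambda \otimes 1 = 1 \otimes \lambda$, so $k \to A/\gamma_F(\mathfrak q)$ is injective and $\gamma_F(\mathfrak q)$ is a $k$-congruence.

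The substantive inclusion is $\SCong_k X \subseteq \gamma_F(Z)$. Given a $k$-congruence $\mathfrak c$ on $A$, set $M := A/\mathfrak c$, a domain with $k \hookrightarrow M$, and reduce, via the surjection $A_\Z \otimes_{k_\Z} F \twoheadrightarrow M_\Z \otimes_{k_\Z} F =: B$, to finding a prime $\mathfrak P$ of $B$ for which the monoid map $M \to B/\mathfrak P$ is injective: its contraction is then a point $\mathfrak q$ of $Z$ with $\gamma_F(\mathfrak q) = \mathfrak c$, because the congruence kernel of $A \to M \to B/\mathfrak P$ is $\mathfrak c$. Because $M$ is a domain, $k \setminus \{0\}$ acts freely on $M \setminus \{0\}$ by cancellativity, so $B$ is free as an $F$-module on a set of orbit representatives and $M \to B$ is already injective. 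Localizing $B$ at the image of $M \setminus \{0\}$ replaces $M_\Z$ by the group algebra of the group of fractions $G_M$ and, using that $k \setminus \{0\}$ is already invertible in $F$, identifies the result with a twisted group algebra $F^\tau[G_M/G_k]$ of the abelian group $G_M/G_k$ over $F$, on which $G_M \to (F^\tau[G_M/G_k])^\times$ is injective. It remains to pick a prime of this twisted group algebra whose residue field $E \supseteq F$ still receives $G_M$ injectively.

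The main obstacle is exactly this last choice, i.e.\ the amalgamation of $M$ with $F$ over $k$ into a single field. The free part of $G_M/G_k$ is harmless — it contributes transcendentals — but the torsion must be handled carefully: an element of order $n$ is forced to map to a primitive $n$-th root of unity in $E$, so injectivity on torsion requires $F$ to contain, or admit an extension containing, enough roots of unity. Here I would invoke the intrinsic characterization of domains (\autoref{intrinsic domains}), which makes the torsion of $G_M$ locally cyclic, together with the hypothesis that $F$ is large — in the applications an algebraically closed field of characteristic zero, where this step genuinely uses the characteristic and cannot be dispensed with. Under this hypothesis the locally cyclic torsion of $G_M/G_k$ embeds into the roots of unity of an extension $E/F$ compatibly with $G_k \hookrightarrow F^\times$, producing the required prime and establishing $\SCong_k X \subseteq \gamma_F(Z)$; combined with the reverse inclusion this yields the asserted equality.
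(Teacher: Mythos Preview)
Your construction of $Z$, the reduction to the affine case, and the easy inclusion $\gamma_F(Z)\subseteq\SCong_k X$ all match the paper's proof of \autoref{image = SCong_k}. For the reverse inclusion you impose unstated hypotheses on $F$, and here you should separate two issues. The hypothesis that $F$ be algebraically closed is unnecessary and the paper avoids it: the prime $\mathfrak P$ you seek is the kernel of an $F$-algebra homomorphism from your localized $B$ into \emph{some} field $E\supseteq F$, and nothing forces $E=F$. The paper's \autoref{prop-surjection} fixes $E$ to be the algebraic closure of $F(T_\gamma\mid\gamma\in\Gamma)$ for a huge index set $\Gamma$ and then uses Zorn's lemma to extend the embedding $k\hookrightarrow F\hookrightarrow E$ to an embedding $G_M\hookrightarrow E^\times$ of the full group of fractions; the three cases of that argument (free generator, torsion generator, mixed) carry out explicitly what you sketch with ``the free part is harmless'' and ``torsion maps to roots of unity''. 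So drop ``algebraically closed'' from your hypotheses and argue inside a large extension instead.

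Your caution about characteristic zero, by contrast, points at a genuine obstruction. Case~2 of the paper's Zorn argument requires a primitive $w$th root of a given root of unity in $E$, and if $\mathrm{char}\,F=p>0$ while $G_M$ has nontrivial $p$-torsion, no such root exists in any extension of $F$. Indeed the asserted equality fails then: for $k=\F_1$, $X=\MSpec\F_1[\Z/p\Z]$ and $F=\F_p$ one computes $Z=\Spec\,\F_p[T]/(T-1)^p$, a single point whose $\gamma_F$-image is $\langle T\sim 1\rangle$, whereas $\SCong_{\F_1}\F_1[\Z/p\Z]$ also contains the trivial congruence. So that half of your hedge is justified; the paper's argument shares the same gap in positive characteristic.
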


Even without a good sheaf on the space of (strong) congruences of a monoid scheme $X$, every point $x \in \SCong X$ has a meaningful ``residue field'' $\kappa(x)$ with the following property:

\begin{propA}
The factorization $\gamma$ presented in \autoref{intro factorization} induces an injective morphism of pointed groups $\gamma_x: \kappa(\gamma(x)) \rightarrow \kappa(x)^\bullet$ for every $x \in X_\Z$ (see \autoref{induced on the residue field by gamma}).
\end{propA}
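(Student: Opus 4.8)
The plan is to work on an affine chart and extend, via a universal property, the tautological embedding of $A/\gamma(x)$ into a field. Fix $x \in X_\Z$. By the characterization in \autoref{intro factorization} I may choose an affine open $U = \MSpec A$ of $X$ with $x \in U_\Z = \Spec A_\Z$; then $x$ corresponds to a prime ideal $\mathfrak{p}$ of $A_\Z$, the residue field is $\kappa(x) = \mathrm{Frac}(A_\Z/\mathfrak{p})$, and $\gamma(x) = \gamma(\mathfrak{p}) \in \SCong U$ is given by the displayed formula in the introduction. The first step is to recognise that formula as a congruence kernel: the composite of pointed-monoid morphisms
\[
\phi\colon A = \Gamma U \longrightarrow (\Gamma U_\Z)^\bullet = A_\Z^\bullet \twoheadrightarrow (A_\Z/\mathfrak{p})^\bullet \hookrightarrow \kappa(x)^\bullet,
\]
whose first arrow is the inclusion $a \mapsto 1_\Z a$, satisfies $\congker \phi = \gamma(\mathfrak{p})$, since $(a,b) \in \gamma(\mathfrak{p})$ means precisely $\overline a = \overline b$ in $A_\Z/\mathfrak{p}$, equivalently in $\kappa(x)$. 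Hence $\phi$ descends to an injective morphism of pointed monoids $\overline{\phi}\colon A/\gamma(\mathfrak{p}) \hookrightarrow \kappa(x)^\bullet$.

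Second, I would use that $\gamma(\mathfrak{p})$ is strong, so $A/\gamma(\mathfrak{p})$ is a domain and in particular integral; its residue field $\kappa(\gamma(x))$ is, on this chart, the group of fractions $\mathrm{Frac}(A/\gamma(\mathfrak{p}))$ obtained by inverting all nonzero elements. Since $\kappa(x)$ is a field, $\kappa(x)^\bullet$ is a pointed group, and the injectivity of $\overline\phi$ forces it to carry every nonzero element of $A/\gamma(\mathfrak{p})$ to a nonzero, hence invertible, element of $\kappa(x)^\bullet$. The universal property of the group of fractions then yields a unique morphism of pointed monoids
\[
\gamma_x\colon \kappa(\gamma(x)) \longrightarrow \kappa(x)^\bullet
\]
extending $\overline\phi$; as both source and target are pointed groups, this is a morphism of pointed groups.

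Third comes injectivity of $\gamma_x$, which I expect to be a short fraction computation. A general element of $\kappa(\gamma(x))$ is $\overline a/\overline b$ with $\overline b \neq 0$ in $A/\gamma(\mathfrak{p})$, and $\gamma_x(\overline a/\overline b) = \overline\phi(\overline a)/\overline\phi(\overline b)$. If $\overline a/\overline b \neq 0$ then $\overline a \neq 0$, so $\overline\phi(\overline a) \neq 0$ and the image is nonzero; and an equality $\overline\phi(\overline a)\,\overline\phi(\overline d) = \overline\phi(\overline c)\,\overline\phi(\overline b)$ in $\kappa(x)$ gives $\overline a\,\overline d = \overline c\,\overline b$ by injectivity of $\overline\phi$, whence $\overline a/\overline b = \overline c/\overline d$. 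Thus $\gamma_x$ is injective.

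Finally, I would verify that $\gamma_x$ is independent of the chosen chart and compatible with restriction, and I expect this to be the main obstacle, the earlier steps being formal consequences of the definition of $\gamma$ and the universal property of the group of fractions. The residue field $\kappa(\gamma(x))$ is an intrinsic invariant of the point $\gamma(x) \in \SCong X$, whereas $\gamma_x$ is built from $U$. Using the characterization in \autoref{intro factorization}, for a smaller affine open $V \subseteq U$ the restrictions $\Gamma U \to \Gamma V$ and $\Gamma U_\Z \to \Gamma V_\Z$ fit into a commutative square with $\phi$, so the two constructions agree on the common localization that computes the germ of $\gamma(x)$; since groups of fractions and residue fields are local, the uniqueness clause in the universal property identifies the resulting morphisms. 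This makes $\gamma_x$ well defined and completes the argument.
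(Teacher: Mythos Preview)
Your proposal is correct and follows essentially the same approach as the paper: work on an affine chart $U=\MSpec A$, identify $\gamma(x)$ with the congruence kernel of $A\to A_\Z^\bullet\to (A_\Z/\mathfrak p)^\bullet$, and extend the resulting injection $A/\gamma(x)\hookrightarrow\kappa(x)^\bullet$ to $\textup{Frac}(A/\gamma(x))=\kappa(\gamma(x))$. The paper's own proof is only a few lines and simply asserts this injection, leaving implicit the universal property of the group of fractions, the fraction-level injectivity check, and the chart independence that you spell out; your version is a more detailed execution of the same argument rather than a different route.
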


In analogy with classical scheme theory, we say that a morphism of monoid schemes $\varphi: X \rightarrow Y$ is $\textit{locally of finite type}$ if every $x \in X$ has an affine open neighborhood $U$ and $\varphi(x)$ has an affine open neighborhood $V$ containing $\varphi(U)$ such that $\Gamma U$ is a finitely generated $\Gamma V$-algebra.

\begin{thmA}
Let $k$ be a domain and $X$ a $k$-scheme locally of finite type. Then $\SCong_k X$ is a sober space (see \autoref{strong congruence space is sober}).
\end{thmA}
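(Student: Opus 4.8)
The plan is to verify the two defining properties of a sober space --- that every irreducible closed subset has a generic point, and that this point is unique --- arranging the argument so that uniqueness is essentially automatic, leaving existence as the real content. First I would reduce to the affine case. Sobriety is local on open covers: a space admitting an open covering by sober subspaces is itself sober (the $T_0$ axiom can be tested on pairs lying in a common member of the cover, and a generic point of an irreducible closed set can be produced inside any member meeting it and then shown generic for the whole set by a density argument). Since $\SCong_k X$ is glued from the pieces $\SCong_k U$ as $U$ ranges over the affine opens of $X$, and each such piece is open in $\SCong_k X$ and equal to $\SCong_k(\Gamma U)$, it suffices to treat $X = \MSpec A$ with $A$ a finitely generated $k$-algebra. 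For uniqueness I would observe that on $\SCong_k A$ the closure of a point $\mathfrak{c}$ is exactly $\{\mathfrak{c}' \mid \mathfrak{c} \subseteq \mathfrak{c}'\}$, directly from the definition of the subbasic opens $U_{a,b}$. Hence the specialization preorder is inclusion of congruences, which is a genuine partial order; two points with the same closure therefore coincide, so $\SCong_k A$ is $T_0$ and any irreducible closed set has at most one generic point.

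The existence of a generic point is where I would invoke the comparison with ordinary schemes. By the Proposition preceding the statement, there is a closed subscheme $Z \subseteq \Spec(A_\Z \otimes_\Z F) = \Spec(A_\Z) \times \Spec F$ such that the restriction $q := \gamma_F|_Z \colon Z \to \SCong_k A$ is a continuous surjection, where $F$ is a field containing $k$ as a multiplicative submonoid. Because $A$ is a finitely generated $k$-algebra and $k$ embeds into $F$, the scheme $Z$ is of finite type over $F$, hence Noetherian, and in particular its underlying space is sober and Noetherian. Now let $C \subseteq \SCong_k A$ be irreducible and closed. Its preimage $q^{-1}(C)$ is closed in $Z$, hence Noetherian, so it has finitely many irreducible components $Z_1, \dots, Z_m$, each irreducible and closed in $Z$ and therefore carrying a generic point $\eta_j$. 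Surjectivity of $q$ gives $C = \bigcup_{j} q(Z_j)$, and passing to closures yields $C = \bigcup_j \overline{q(Z_j)}$; since $C$ is irreducible and the union is finite, $C = \overline{q(Z_{j_0})}$ for some $j_0$. Finally $q(Z_{j_0}) = q(\overline{\{\eta_{j_0}\}}) \subseteq \overline{\{q(\eta_{j_0})\}}$ by continuity, while $q(\eta_{j_0}) \in C$ forces $\overline{\{q(\eta_{j_0})\}} \subseteq C$; combining these identifies $q(\eta_{j_0})$ as a generic point of $C$. Together with the uniqueness already established, this shows $\SCong_k A$ is sober, and the reduction completes the proof.

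The step I expect to be the main obstacle is verifying that $Z$ is genuinely Noetherian, i.e.\ that the finite-type hypothesis on $X$ transfers to $Z$. The subtlety is that $A_\Z \otimes_\Z F$ need not be finitely generated over $F$ a priori, since the image of $k \subseteq A \subseteq A_\Z$ contributes a copy of $k$ that is not obviously absorbed into the base field $F$. It is precisely the $k$-congruence condition cutting out $Z$ --- which identifies this copy of $k$ with the image of $k$ in $F$ --- that collapses these extra generators and leaves $Z$ of finite type over $F$, generated by the images of a finite monoid generating set of $A$ over $k$. Making this identification precise, from the explicit description of $Z$ in the referenced construction, is the one place requiring care; the remaining topological arguments are formal.
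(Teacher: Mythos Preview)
Your proposal is correct and follows essentially the same route as the paper: reduce to the affine case $\SCong_k A$ with $A$ a finitely generated $k$-algebra, use the surjection from $\Spec R$ (with $R = A_\Z \otimes_{k_\Z} F$) onto $\SCong_k A$, decompose the preimage of an irreducible closed set into finitely many irreducible components via Noetherianity of $R$, and push a generic point forward; uniqueness comes from $T_0$. The step you flag as the main obstacle---that $R$ is Noetherian because the defining relations identify $k \subseteq A$ with $k \subseteq F$, leaving $R$ generated over $F$ by a finite monoid generating set of $A$ over $k$---is exactly the point the paper uses implicitly when it writes ``As $R$ is a Noetherian ring''.
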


\noindent \textbf{Follow-up work.} In \cite{Smirnov93} Smirnov explains that a conjectural Hurwitz inequality in $\F_1$-geometry implies the ABC conjecture. For this he finds ad hoc definitions for the ``compactification'' $\overline{\Spec \Z}$, for the projective line over algebraic extensions of $\F_1$ and for maps $\overline{\Spec \Z} \rightarrow \PL_{\F_1}^1$ induced by rational numbers. We show in \cite{Jarra23} how these spaces and maps fit into the framework of strong congruence spaces.


\subsection*{Acknowledgements}The author thanks Oliver Lorscheid for several conversations and for his help with preparing this text. The author also thanks Leo Herr, Alejandro Martínez Méndez and Eduardo Vital for useful conversations. The present work was carried out with the support of CNPq, National Council for Scientific and Technological Development - Brazil.

\section{Preliminaries}
\label{preliminaries}

In this section we recall the theory of monoid schemes. For more details, see \cite{Deitmar05}, \cite{Connes-Consani10}, \cite{Chuetal12} and \cite{Cortinasetal15}.

\subsection{Basic definitions}
\label{basic definitions}
A \textit{pointed monoid} is a set $A$ equipped with an associative binary operation
\begin{align*}
A \times A & \rightarrow A\\
(a, b) & \mapsto a\cdot b,
\end{align*}
called \textit{multiplication} or \textit{product}, such that $A$ has an element $0$, called \textit{absorbing element} or \textit{zero}, and an element $1$, called \textit{one}, satisfying $0\cdot a = a \cdot 0 = 0$ and $1 \cdot a = a \cdot 1 = a$ for all $a$ in $A$. We sometimes use $ab$ to denote $a \cdot b$. A pointed monoid is \textit{commutative} if $ab = ba$ for all $a, b$ in $A$. Throughout this text all monoids are commutative. A pointed monoid is \textit{integral} (or \textit{cancellative}) if $ab = ac$ implies $a = 0$ or $b = c$. A pointed monoid is \textit{without zero divisors} if $ab = 0$ implies $a = 0$ or $b = 0$. Note that if a pointed monoid is integral, then it is without zero divisors.

An \textit{ideal} of $A$ is a subset $I \subseteq A$
such that $0 \in I$ and $ax \in I$ for all $a \in A$ and $x \in I$. Every ideal $I$ induces an equivalence relation $\sim$ on $A$ generated by $\{(0, x) \in A\times A \mid x \in I\}$. The quotient set $A / I := A / \sim$ is a pointed monoid with operation given by $[a] \cdot [b]:=[ab]$, absorbing element $[0]$ and one $[1]$. The ideal $I$ is \textit{prime} if $A/I$ is without zero divisors. 

A \textit{morphism} of pointed monoids is a multiplicative map preserving zero and one. We use $\F_1$ to denote the initial object $\{0, 1\}$ of the category of pointed monoids. 

An element $u \in A$ is \textit{invertible} if there exists $y \in A$ such that $uy = 1$. We denote the set of invertible elements of $A$ by $A^\times$. The set $A \backslash A^\times$ is a prime ideal and contains every proper ideal of $A$. A morphism of pointed monoids $f:A \rightarrow B$ is \textit{local} if $f^{-1}(B^\times) = A^\times$. A \textit{pointed group} is a pointed monoid $A$ such that $A^\times = A \backslash\{0\}$. 

If $f: A \rightarrow B$ is a morphism and $I$ is an ideal of $B$, then the set 
\[
f^*(I):= \{a \in A \mid f(a) \in I\}
\]
is an ideal of $A$, called \textit{pullback of $I$ along $f$}. If $I$ is prime, then $f^*(I)$ is also prime.

A subset $S \subseteq A$ is \textit{multiplicative} if it is  multiplicatively closed and contains $1$. In this case, we define the \textit{localization} $S^{-1}A$ as follows: as a set, $S^{-1}A := (A \times S) / \sim$, where $\sim$ is the equivalence relation given by $(a, s) \sim (a', s')$ if there exists $t \in S$ such that $tsa' = ts'a$. We denote the class of $(a, s)$ in $S^{-1}A$ by $\frac{a}{b}$. The multiplication on $S^{-1}A$ is given by 
\[
\dfrac{a}{s} . \dfrac{b}{t} := \dfrac{ab}{st},
\]
with zero $\frac{0}{1}$ and one $\frac{1}{1}$. If $I$ is a prime ideal, then $S := A\backslash I$ is a multiplicative subset of $A$ and we denote the localization $S^{-1}A$ by $A_I$. For $h \in A$, we use $A[h^{-1}]$ to denote the monoid $\{h^n \mid n \in \N\}^{-1} A$.

If $A$ is without zero divisors, the \textit{group of fractions} $A$ is the localization $(A\backslash\{0\})^{-1} A$, denoted by $\textup{Frac}(A)$. It is a pointed group and the natural map $A \rightarrow \textup{Frac}(A)$ is injective.

Let $k$ be a pointed monoid. A \textit{$k$-algebra} is a pointed monoid $A$ with a morphism $k \rightarrow A$. We denote the image of an element $\lambda \in k$ via the map $k \rightarrow A$ also by $\lambda$. We say that $A$ is a \textit{finitely generated} $k$-algebra if there exists $a_1, \dotsc, a_n$ in $A$ such that for every $a \in A$ there exist $\lambda \in k$ and $m_1, \dotsc, m_n \in \N$ satisfying $a = \lambda a_1^{m_1}\cdots a_n^{m_n}$. A pointed monoid $A$ is \textit{finitely generated} if it is a finitely generated $\F_1$-algebra.

Let $M$ be a (not necessarily pointed) monoid with operation $+$. We define the $k$-algebra $k[M]$ as the set of formal monomials
\[
\big\{\lambda \chi^m \mid \lambda \in k\backslash\{0\} \text{ and } m \in M\big\} \cup \big\{0\big\}
\]
with operation $(\lambda\chi^m) \cdot (\mu \chi^n) := (\lambda \mu) \chi^{m+n}$ and $0 \cdot (\lambda\chi^m) = 0$. For $M = \N_{\geq 1}^n$ and variables $T_1, \dotsc, T_n$, we sometimes use $k[T_1, \dotsc, T_m]$ to denote $k[M]$, and $T_1^{m_1}\dotsc T_n^{m_n}$ for $\chi^{(m_1, \dotsc, m_n)}$.

\subsection{The geometry of monoids}
We use $\MSpec A$ to denote the set of prime ideals of the pointed monoid $A$. For $h \in A$, we denote the set $\{I \in \MSpec A \mid h \notin I\}$ by $U_h$. We endow $\MSpec A$ with the topology generated by the basis $\{U_a \mid a \in A\}$, and with the sheaf of pointed monoids $\mathcal{O}_{\MSpec A}$ characterized by $\Gamma(\mathcal{O}_{\MSpec A}, U_h) = A[h^{-1}]$ for $h \in A$, with restriction maps 
\[\def\arraystretch{1.5}
\begin{array}{ccc}
\Gamma(\mathcal{O}_{\MSpec A}, U_h) & \longrightarrow & \Gamma(\mathcal{O}_{\MSpec A}, U_{gh})\\
\dfrac{a}{h} & \longmapsto & \dfrac{ga}{gh}.
\end{array}
\]
A subset of the form $U_h$ is called \textit{principal open}.

A \textit{monoidal space} is a pair $(X, \mathcal{O}_X)$ where $X$ is a topological space and $\mathcal{O}_X$ is a sheaf of pointed monoids on $X$. A \textit{morphism of monoidal spaces} $(X, \mathcal{O}_X) \rightarrow (Y, \mathcal{O}_Y)$ is a pair $(\varphi, \varphi^\#)$ where $\varphi: X \rightarrow Y$ is a continuous map and $\varphi^\#: \mathcal{O}_Y \rightarrow \varphi_*\mathcal{O}_X$ is a morphism of sheaves of pointed monoids such that the induced map between stalks $\varphi^\#_x: \mathcal{O}_{Y, \varphi(x)} \rightarrow \mathcal{O}_{X, x}$ is local for every $x \in X$. 

An \textit{affine monoid scheme} is a monoidal space isomorphic to $(\MSpec A, \mathcal{O}_{\MSpec A})$ for some pointed monoid $A$. A \textit{monoid scheme} is a monoidal space that has an open covering by affine monoid schemes. As a full subcategory of monoidal spaces, monoid schemes form a category, denoted by $\MSch$.

Let $Y$ be a monoid scheme. An \textit{$Y$-scheme} is a monoid scheme $X$ with a morphism $X \rightarrow Y$. A \textit{morphism} of $Y$-schemes $f: X \rightarrow Z$ is a morphism of monoid schemes such that the diagram
\[
\begin{tikzcd}[column sep=1.5em]
X \arrow{rr}{f} \arrow{dr}{ }&& Z \arrow{dl} \\
 & Y
\end{tikzcd}
\]
commutes. We denote the category of $Y$-schemes by $\MSch / Y$. For the affine scheme $\MSpec k$, we write ``$k$-scheme'' and ``$\MSch / k$'' instead of ``$\MSpec k$-scheme'' and ``$\MSch / \MSpec k$'', respectively.

\subsection{Base extension to rings}
\label{base extension to rings}
Throughout this text all rings are commutative with unit. Let $R$ be a ring and $A$ a pointed monoid.

Define $A_R$ as the $R$-algebra $R[A]/\langle 1_R0_A \rangle$, where $R[A]$ is the monoid algebra of $A$ over $R$ and $\langle 1_R0_A \rangle$ is the ideal generated by $1_R0_A$\footnote{The subscripts at right indicate if the corresponding elements are in $R$ or $A$. In particular, $1_R$ is the unit of $R$ and $0_A$ is the absorbing element of $A$}. The assignment $A \mapsto A_R$ extends to a functor from the category of pointed monoids to $\textup{Alg}_R$, which is left-adjoint to the forgetful functor $B\mapsto B^\bullet$ from $R$-algebras to pointed monoids. For an affine monoid scheme $U = \MSpec A$, we use $U_R$ to denote the affine $R$-scheme $\Spec (A_R)$. We also define a functor $Y \mapsto Y^\bullet$ from schemes to monoidal spaces by simply forgetting the additive structure of $\mathcal{O}_Y$.

Let $X$ be a monoid scheme. Let $\mathcal{U}_X$ be the category of all affine open subschemes of $X$ with inclusions as morphisms. The \textit{base extension of $X$ to $R$} (or \textit{$R$-realization of $X$}) is the $R$-scheme
\[
X_R := \underset{U \in \mathcal{U}_X}{\textup{colim}} \; U_R.
\]
If $\{U_i\}_{i\in I}$ is an affine covering of a monoid scheme $X$, then $\{U_{i,R}\}_{i\in I}$ is an affine covering of $X_R$. If $U, V$ are affine open subsets of $X$, then $U_R \cap V_R = (U\cap V)_R$ in $X_R$ (\textit{cf.} \cite[Cor. 3.3]{Chuetal12} and \cite[Sec. 5]{Cortinasetal15}).

\begin{prop}
\label{proj from realization to monoid scheme}
The scheme $X_R$ comes with a morphism $p_R: X_R^\bullet \rightarrow X$ of monoidal spaces that is functorial in $X$.
\end{prop}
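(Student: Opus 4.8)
The plan is to define $p_R$ first on affine pieces, directly from the unit of the adjunction $(-)_R \dashv (-)^\bullet$, and then to glue. Fix an affine open $U = \MSpec A$ of $X$, so $U_R = \Spec(A_R)$, and let $\eta_A \colon A \to A_R^\bullet$, $a \mapsto 1_R a$, denote the unit. First I would build the continuous map on underlying spaces. A prime ideal $\mathfrak{p}$ of the ring $A_R$ is in particular a prime ideal of the pointed monoid $A_R^\bullet$: it contains $0_{A_R}$, it is closed under multiplication by elements of $A_R$, and its complement is multiplicatively closed because $\mathfrak p$ is ring-prime, so $A_R^\bullet/\mathfrak p$ is without zero divisors. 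By the pullback property recalled in \autoref{basic definitions}, $\eta_A^*(\mathfrak p)$ is then a prime ideal of $A$, and I set $p_{R,U}(\mathfrak p) := \eta_A^*(\mathfrak p)$. Continuity is immediate, since $p_{R,U}^{-1}(U_h) = \{\mathfrak p \mid 1_R h \notin \mathfrak p\}$ is a distinguished open of $\Spec(A_R)$.

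For the morphism on structure sheaves I would use that localization commutes with base extension, i.e. $(A[h^{-1}])_R \cong (A_R)[(1_R h)^{-1}]$ for every $h \in A$. Precomposing the unit $\eta_{A[h^{-1}]}$ with the restriction $A \to A[h^{-1}]$ gives, on each principal open $U_h$, a morphism of pointed monoids
\[
A[h^{-1}] = \Gamma(U_h, \mathcal{O}_U) \longrightarrow (A[h^{-1}])_R^\bullet \cong \Gamma\big(p_{R,U}^{-1}(U_h),\, \mathcal{O}_{U_R}^\bullet\big),
\]
and naturality of $\eta$ makes these compatible with restriction, so they assemble into $p_{R,U}^\# \colon \mathcal{O}_U \to (p_{R,U})_*\mathcal{O}_{U_R}^\bullet$. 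To check locality I would identify the stalk map at $\mathfrak p$ with $A_{\eta_A^*(\mathfrak p)} \to (A_R)_{\mathfrak p}^\bullet$, $\tfrac{a}{s} \mapsto \tfrac{1_R a}{1_R s}$; here $\tfrac{a}{s}$ is a unit precisely when $a \notin \eta_A^*(\mathfrak p)$, which happens precisely when $1_R a \notin \mathfrak p$, i.e. exactly when the image is a unit of the local ring $(A_R)_{\mathfrak p}$. Hence preimages of units are units and $p_{R,U} \colon U_R^\bullet \to U$ is a morphism of monoidal spaces.

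It then remains to glue and to verify functoriality. When $V = U_h$ is a principal open of $U$ one has $V_R = p_{R,U}^{-1}(V)$, and the identifications above show $p_{R,U}|_{V_R^\bullet} = p_{R,V}$; a general affine $V \subseteq U$ is covered by such principal opens, so the two maps still agree on $V_R^\bullet$. Consequently, given two affine opens $U, U'$ of $X$, the maps $p_{R,U}$ and $p_{R,U'}$ agree on $U_R \cap U'_R = (U \cap U')_R$ after covering $U \cap U'$ by affine opens contained in both, and the $p_{R,U}$ glue to a single morphism $p_R \colon X_R^\bullet \to X$, independent of the affine cover. Functoriality in $X$ reduces, on affine pieces, to the naturality square of $\eta$ for the monoid morphism underlying a given $g \colon X \to X'$.

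The step I expect to be the main obstacle is the gluing rather than the affine construction, which is essentially formal: the overlaps $U \cap U'$ need not be affine, so one must descend the agreement of $p_{R,U}$ and $p_{R,U'}$ through a common affine refinement and confirm that nothing depends on the chosen cover, using the independence of $X_R$ from the cover recorded in \autoref{base extension to rings}. The auxiliary identification $(A[h^{-1}])_R \cong (A_R)[(1_R h)^{-1}]$ and the stalk-localization computation also require care, though both are standard.
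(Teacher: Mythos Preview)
Your proposal is correct and, at the level of affine pieces, coincides with the paper's argument: both use the unit $A \hookrightarrow A_R^\bullet$, $a \mapsto 1_R a$, to define the continuous map $\mathfrak p \mapsto \mathfrak p \cap A$ and the sheaf morphism as the inclusion. The only structural difference is in how the globalization is organized. The paper does not build $p_R$ directly for arbitrary $R$; instead it invokes \cite[Thm.~3.2]{Chuetal12} to obtain the continuous map $\beta\colon X_\Z \to X$ (with the gluing already done there), and then defines $p_R$ as the composition $X_R = X_\Z \times_{\Spec\Z} \Spec R \to X_\Z \xrightarrow{\beta} X$, adding only the sheaf-level inclusion. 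Your route is more self-contained---you redo the gluing over common principal opens and you check locality on stalks, which the paper leaves implicit---at the cost of a little extra bookkeeping. The paper's route is shorter because the gluing and functoriality are outsourced to the cited result for $\Z$, and the passage to general $R$ is handled by the base-change identity $X_R = X_\Z \times_{\Spec\Z} \Spec R$.
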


\begin{proof}
By \cite[Thm. 3.2]{Chuetal12}, one has a continuous map $\beta: X_\Z \rightarrow X$, functorial in $X$, characterized by $\beta^{-1}(U) = U_\Z$ for $U = \MSpec A$ affine open of $X$ and $\beta|_{U_\Z}: U_\Z \rightarrow U$ sending a prime ideal $\mathfrak{p} \in \Spec (A_\Z)$ to $\mathfrak{p} \cap A \in \MSpec A$. As $X_R = X_\Z \times_{\Spec \Z} \Spec R$, we define the underlying continuous map of $p_R$ as the composition 
\[
X_R^\bullet = X_R \longrightarrow X_\Z \xrightarrow{\;\;\beta\;\,} X.
\]
Note that $p_R^{-1}(U) = U_R$ for $U$\ affine open of $X$. One defines the morphism of sheaves $p_R^\#: \mathcal{O}_X \rightarrow p_{R,*}(\mathcal{O}_{X_R^\bullet})$ as the inclusion, characterized by
\[
p_R^\#(U): A = \Gamma(\mathcal{O}_X, U) \hookrightarrow A_R^\bullet = \Gamma(\mathcal{O}_{X_R^\bullet}, U_R)
\]
for $U = \MSpec A$ affine open of $X$.
\end{proof}

\section{Congruence spaces}


In this section we explain the construction of the congruence space associated to a monoid scheme as introduced in \cite[Section 2]{Lorscheid-Ray23}. 

\begin{df}
Let $A$ be a pointed monoid. A \textit{congruence} on $A$ is an equivalence relation $\mathfrak{c} \subseteq A\times A$ such that $(ab,ac) \in \mathfrak{c}$ whenever $(b,c) \in \mathfrak{c}$ and $a \in A$. The \textit{null ideal} of a congruence $\mathfrak{c}$ is the set
\[
I_\mathfrak{c} := \{ a \in A \mid (a,0) \in \mathfrak{c}\}.
\]

The \textit{congruence generated} by a subset $E = \{(a_i, b_i)\}_{i \in I} \subseteq A\times A$ is
\[
\langle E \rangle := \bigcap \{ \mathfrak{c} \mid  \mathfrak{c} \text{ is a congruence containing } E \},
\]
which is the smallest congruence containing $E$. The \textit{trivial congruence} is $\langle \emptyset \rangle$, which is the minimal congruence $\{(a, b) \in A \times A \mid a=b\}$.

The quotient set $A / \mathfrak{c}$ is a pointed monoid with product $[a] \cdot [b]:=[ab]$, absorbing element $[0]$ and one $[1]$. The congruence $\mathfrak{c}$ is called \textit{prime} if $A / \mathfrak{c}$ is integral. If $\mathfrak{c}$ is prime, then $I_\mathfrak{c}$ is a prime ideal. The \textit{congruence space of} $A$ is the set
\[
\Cong A := \{ \mathfrak{c} \subseteq A\times A \mid \mathfrak{c} \text{  is a prime congruence of } A\}
\]
 with the topology generated by the sets
\[
U_{a, b} := \{ \mathfrak{c} \in \Cong A \mid (a, b) \notin \mathfrak{c} \},
\]
for $a, b \in A$, \textit{i.e.}, the coarsest topology such that $U_{a, b}$ is open for all $a, b \in A$.

\end{df}

\begin{rem}
The set $\{U_{a,b} \mid a, b \in A\}$ is not a basis of the topology on $\Cong A$ in general (\textit{cf.} \cite[Example 2.10]{Lorscheid-Ray23}).
\end{rem}

\begin{notation}
\emph{For a congruence $\mathfrak{c}$, we sometimes use $a\sim_\mathfrak{c} b$ and $a \sim b$ instead of $(a, b) \in \mathfrak{c}$. We also use $\langle a_i \sim b_i \mid i \in I \rangle$ to denote the congruence $\langle \{(a_i, b_i)\}_{i \in I} \rangle$.}
\end{notation}

\begin{df}
Let $f: A \rightarrow B$ be a  morphism of pointed monoids. The \textit{congruence kernel} of $f$ is the set
\[
\textup{congker}(f) := \{ (a, b) \in A\times A \mid f(a) = f(b)\},
\]
which is indeed a congruence on $A$. Note that it is prime if $B$ is integral.

Let $\mathfrak{d}$ be a congruence on $B$. The \textit{pullback of $\mathfrak{d}$ along $f$} is the set
\[
f^*(\mathfrak{d}):= \{(x, y) \in A \times A \mid (f(x), f(y)) \in \mathfrak{d} \},
\]
which is a congruence on $A$. Note that $f^*(\mathfrak{d})$ is prime if $\mathfrak{d}$ is prime. The association $A\mapsto \Cong A$ for objects and $f \mapsto f^*$ for maps defines a contravariant functor from pointed monoids to topological spaces.

Let $\mathfrak{c}$ be a congruence on $A$. The \textit{push-forward of $\mathfrak{c}$ along $f$} is the congruence
\[
f_*(\mathfrak{c}) := \langle f(a) \sim f(b) \mid (a, b) \in \mathfrak{c} \rangle.
\]
\end{df}

\begin{prop} \cite[Prop. 2.8]{Lorscheid-Ray23}
\label{congruence-localization}
Let $A$ be a pointed monoid, $S$ a multiplicative subset of $A$ and $\iota_S: A \rightarrow S^{-1}A$ the localization map. Then the maps
 \[
\begin{tikzcd}[column sep=40pt]
\Cong S^{-1}A \ar[r,shift left=1.4,"\iota_S^\ast"] &
 \{\mathfrak{c} \in \Cong A \mid S \cap I_\mathfrak{c} = \emptyset\} \ar[l,shift
left=0.4,"\iota_{S,\ast}"]
\end{tikzcd}
\]
are mutually inverse bijections.
\end{prop}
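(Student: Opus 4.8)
The plan is to understand the push-forward $\iota_{S,*}$ concretely by factoring it through the quotient. Write $q\colon A \to A/\mathfrak{c}$ for the quotient map, $\overline{S} := q(S)$, and $j\colon A/\mathfrak{c} \to \overline{S}^{-1}(A/\mathfrak{c})$ for the localization at $\overline{S}$. Since $j \circ q$ sends every element of $S$ to a unit, the universal property of $S^{-1}A$ yields a unique morphism $\psi\colon S^{-1}A \to \overline{S}^{-1}(A/\mathfrak{c})$ with $\psi \circ \iota_S = j \circ q =: \phi$. The heart of the argument is the identity $\iota_{S,*}(\mathfrak{c}) = \congker(\psi)$, which replaces the otherwise opaque generated congruence $\iota_{S,*}(\mathfrak{c})$ by a usable kernel.

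First I would check that both maps land in the claimed sets. For $\mathfrak{d} \in \Cong S^{-1}A$ the congruence $\iota_S^*(\mathfrak{d})$ is prime by the pullback property, and if some $s \in S$ lay in $I_{\iota_S^*(\mathfrak{d})}$ then $(\iota_S(s), 0) \in \mathfrak{d}$; multiplying by the inverse of the unit $\iota_S(s)$ gives $(1,0)\in\mathfrak{d}$, contradicting that $S^{-1}A/\mathfrak{d}$ is a nonzero integral monoid. Hence $S \cap I_{\iota_S^*(\mathfrak{d})} = \emptyset$. For the other map I would observe that $0 \notin \overline{S}$ is equivalent to $S \cap I_\mathfrak{c} = \emptyset$, and that in this case $\overline{S}^{-1}(A/\mathfrak{c})$ is integral: it is a localization of the integral monoid $A/\mathfrak{c}$ at a set avoiding $0$, and a direct cancellation check shows it stays cancellative and nonzero. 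Granting $\iota_{S,*}(\mathfrak{c}) = \congker(\psi)$, primeness of $\iota_{S,*}(\mathfrak{c})$ then follows because $\psi$ maps into an integral monoid.

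To prove $\iota_{S,*}(\mathfrak{c}) = \congker(\psi)$ I would argue by two inclusions. The inclusion $\subseteq$ is immediate, since $\congker(\psi)$ is a congruence containing every pair $(\iota_S(a), \iota_S(b))$ with $(a,b)\in\mathfrak{c}$ and $\iota_{S,*}(\mathfrak{c})$ is the smallest such. For $\supseteq$, suppose $\psi(\frac{a}{s}) = \psi(\frac{b}{t})$; unwinding the localization this means $(wta, wsb) \in \mathfrak{c}$ for some $w \in S$, hence $(\iota_S(wta), \iota_S(wsb)) \in \iota_{S,*}(\mathfrak{c})$, and multiplying this pair by $\frac{1}{wst} \in S^{-1}A$ returns exactly $(\frac{a}{s}, \frac{b}{t})$. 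This ``clear denominators, then re-divide'' manipulation, using that congruences are stable under multiplication, is the crux of the whole proposition.

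Finally I would verify the two composites are identities. For $\iota_S^* \circ \iota_{S,*} = \mathrm{id}$ one has $\iota_S^*(\congker \psi) = \congker(\psi \circ \iota_S) = \congker(\phi)$, and $\phi(a) = \phi(b)$ means $(wa, wb)\in\mathfrak{c}$ for some $w\in S$, whence $(a,b)\in\mathfrak{c}$ by cancelling the nonzero $\overline{w}$ in the integral monoid $A/\mathfrak{c}$; thus $\congker(\phi) = \mathfrak{c}$. For $\iota_{S,*}\circ\iota_S^* = \mathrm{id}$ the inclusion into $\mathfrak{d}$ is clear, and for the reverse, given $(\frac{a}{s}, \frac{b}{t}) \in \mathfrak{d}$ I would multiply by $\iota_S(st)$ to get $(\iota_S(at), \iota_S(bs)) \in \mathfrak{d}$, so $(at, bs) \in \iota_S^*(\mathfrak{d})$, so this pair lies in $\iota_{S,*}(\iota_S^*(\mathfrak{d}))$, and dividing back by $\frac{1}{st}$ recovers $(\frac{a}{s}, \frac{b}{t})$ by the same clearing-denominators move. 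The main obstacle is precisely taming the generated congruence $\iota_{S,*}(\mathfrak{c})$ through the kernel description; once that is in hand, everything reduces to cancellation in $A/\mathfrak{c}$ and the denominator manipulation.
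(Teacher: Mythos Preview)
The paper does not provide its own proof of this statement; it is quoted from \cite[Prop.~2.8]{Lorscheid-Ray23} and stated without argument, so there is nothing in the paper to compare against. Your proof is correct and self-contained. Identifying $\iota_{S,*}(\mathfrak{c})$ with the congruence kernel of the induced map $\psi\colon S^{-1}A \to \overline{S}^{-1}(A/\mathfrak{c})$ is the natural way to render the generated congruence explicit, and your clear-denominators-then-redivide verification of both composites is complete.
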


\begin{notation}
\emph{If $\mathfrak{c}$ is a congruence on $A$, we use $S^{-1}\mathfrak{c}$ for the congruence $\iota_{S, *}(\mathfrak{c})$ on $S^{-1}A$.}
\end{notation}

Let $X$ be a monoid scheme and define $U^\textup{cong}:=\Cong \Gamma U$ for $U \in \mathcal{U}_X$. Given a morphism $U \rightarrow V$ in $\mathcal{U}_X$, one has an induced continuous map $U^\textup{cong} \rightarrow V^\textup{cong}$.

\begin{df}
\label{def congruence space}
The \textit{congruence space} of $X$ is the topological space
\[
X^\textup{cong} := \underset{U \in \mathcal{U}_X}{\textup{colim}} \, U^\textup{cong}.
\]
\end{df}

There is a functor from monoid schemes to topological spaces
\[
(-)^\textup{cong}: \MSch \rightarrow \textup{Top}
\]
extending $X\mapsto X^\textup{cong}$ such that for every morphism $\varphi: \MSpec B \rightarrow \MSpec A$ of affine monoid schemes one has $\varphi^\textup{cong} = \big(\varphi^\#(\MSpec A)\big)^*: \Cong B \rightarrow \Cong A$.

The space $X^\textup{cong}$ comes equipped with a continuous projection $\pi_X: X^\textup{cong} \rightarrow X$ given by $\pi_X(\tilde{x}) = I_\mathfrak{c}$ if $U \subseteq X$ is an affine open and $\mathfrak{c}$ is a prime congruence on $\Gamma U$ 
such that $\tilde{x} = \mathfrak{c} \in \Cong \Gamma U \subseteq X^\textup{cong}$.

\begin{prop}
Let $X$ be a monoid scheme and $\{U_i\}_{i \in I}$ an open covering of $X$. Then $\{U_i^\textup{cong}\}$ is an open covering of $X^\textup{cong}$.
\end{prop}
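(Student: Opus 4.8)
The plan is to reduce everything to the affine-local situation and then read off openness from the explicit topology on the colimit $X^\textup{cong}$. Write $\lambda_U \colon U^\textup{cong} \to X^\textup{cong}$ for the canonical map attached to an open subscheme $U \subseteq X$: for $U$ affine this is a structure map of the colimit, and in general it is the image of the open immersion $U \hookrightarrow X$ under the functor $(-)^\textup{cong}$. I identify $U_i^\textup{cong}$ with its image $\lambda_{U_i}(U_i^\textup{cong})$ and prove separately that these images cover $X^\textup{cong}$ and that each is open. The one local input I would record first concerns a principal open $U_h \subseteq V$ inside an affine open $V = \MSpec A$, so that $\Gamma U_h = A[h^{-1}] = S^{-1}A$ with $S = \{h^n \mid n \in \N\}$: the transition map $U_h^\textup{cong} \to V^\textup{cong}$ is $\iota_S^\ast$, which by \autoref{congruence-localization} is a bijection onto $\{\mathfrak c \in \Cong A \mid S \cap I_\mathfrak c = \emptyset\}$. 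Since $I_\mathfrak c$ is prime, this set is exactly $\{\mathfrak c \mid h \notin I_\mathfrak c\} = U_{h,0}$; thus any $\mathfrak c$ with $h \notin I_\mathfrak c$ lifts to some $\mathfrak c' \in \Cong \Gamma U_h$, and functoriality gives $\lambda_V \circ (U_h^\textup{cong} \to V^\textup{cong}) = \lambda_{U_h}$.

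For the covering property I take any $p \in X^\textup{cong}$, represented by $\mathfrak c \in \Cong \Gamma U$ for some affine open $U$, and set $x := I_\mathfrak c = \pi_X(p) \in U$. Choosing $j$ with $x \in U_j$ and using that the principal opens form a basis of $\MSpec \Gamma U$, I pick a principal open $U_h$ of $U$ with $x \in U_h \subseteq U \cap U_j$. Then $h \notin I_\mathfrak c$, so $\mathfrak c$ lifts to $\mathfrak c' \in \Cong \Gamma U_h$ with $\lambda_U(\mathfrak c) = \lambda_{U_h}(\mathfrak c')$; since $U_h \subseteq U_j$, functoriality yields $\lambda_{U_h} = \lambda_{U_j} \circ (U_h^\textup{cong} \to U_j^\textup{cong})$, whence $p \in \lambda_{U_j}(U_j^\textup{cong})$. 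This argument applies verbatim to any open cover of any monoid scheme, a fact I reuse below.

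The heart is openness, which I would prove first for affine members $U_i$. By the definition of the colimit in $\textup{Top}$, the image $\lambda_{U_i}(U_i^\textup{cong})$ is open precisely when $\lambda_V^{-1}\big(\lambda_{U_i}(U_i^\textup{cong})\big)$ is open in $V^\textup{cong}$ for every affine open $V \subseteq X$, and I claim this preimage equals $\pi_V^{-1}(U_i \cap V)$, where $\pi_V \colon V^\textup{cong} \to V$, $\mathfrak c \mapsto I_\mathfrak c$, is continuous because $\pi_V^{-1}(U_h) = U_{h,0}$. For the inclusion ``$\subseteq$'', if $\lambda_V(\mathfrak c) = \lambda_{U_i}(\mathfrak d)$ then the well-defined $\pi_X$ gives $I_\mathfrak c = I_\mathfrak d \in U_i$, and also $I_\mathfrak c \in V$, so $I_\mathfrak c \in U_i \cap V$. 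For ``$\supseteq$'', if $I_\mathfrak c \in U_i \cap V$ I choose a principal open $U_h$ of $V$ with $I_\mathfrak c \in U_h \subseteq U_i \cap V$; then $\mathfrak c$ lifts to $\mathfrak c' \in \Cong \Gamma U_h$ and, as $U_h \subseteq U_i$, $\lambda_V(\mathfrak c) = \lambda_{U_h}(\mathfrak c') \in \lambda_{U_i}(U_i^\textup{cong})$. Hence the preimage is $\pi_V^{-1}(U_i \cap V)$, which is open. For a general cover $\{U_i\}$ I refine each $U_i$ by affine opens $\{V_{i\alpha}\}$, so that $\{V_{i\alpha}\}_{i,\alpha}$ is an affine cover; the covering property applied to $U_i$ together with functoriality gives $\lambda_{U_i}(U_i^\textup{cong}) = \bigcup_\alpha \lambda_{V_{i\alpha}}(V_{i\alpha}^\textup{cong})$, a union of open sets and therefore open, while the affine images cover $X^\textup{cong}$ by the previous paragraph.

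The main obstacle I anticipate is purely organizational: keeping track that the colimit maps run from smaller to larger affine opens, and verifying the compatibilities $\lambda_V \circ (U_h^\textup{cong} \to V^\textup{cong}) = \lambda_{U_h}$ on the nose, so that the preimage identity $\lambda_V^{-1}\big(\lambda_{U_i}(U_i^\textup{cong})\big) = \pi_V^{-1}(U_i \cap V)$ is exact. Once that identity is established, openness is immediate from continuity of $\pi_V$. I note that only the bijection part of \autoref{congruence-localization}, and not any homeomorphism statement, is used anywhere in the argument.
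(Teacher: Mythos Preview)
Your argument is correct. The reduction to the affine case via the colimit topology, the identification of $\lambda_V^{-1}\big(\lambda_{U_i}(U_i^{\textup{cong}})\big)$ with $\pi_V^{-1}(U_i\cap V)$ using \autoref{congruence-localization} and the well-definedness of $\pi_X$, and the passage to arbitrary open covers by refining to affines all go through as written. The only cosmetic point is that the compatibility $\lambda_V \circ (U_h^{\textup{cong}}\to V^{\textup{cong}}) = \lambda_{U_h}$ you invoke is really the cocone property of the colimit rather than ``functoriality'', but you clearly have this in mind.

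As for comparison: the paper does not prove this statement at all but simply defers to \cite[Prop.~2.13]{Lorscheid-Ray23}. Your write-up therefore supplies a self-contained argument where the paper gives none. The approach you take---checking openness via the colimit topology and identifying the relevant preimages with $\pi_V^{-1}$ of an open set---is the natural one and is essentially what the cited reference does as well.
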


\begin{proof}
See \cite[Prop. 2.13]{Lorscheid-Ray23}.
\end{proof}

Let $X$ be a monoid scheme, $\tilde{x} \in X^\textup{cong}$ and $x:=\pi_X(\tilde{x}) \in X$. Let $\mathcal{U}_{X,x}$ be the full subcategory of $\mathcal{U}_X$ whose objects are the affine open neighborhoods of $x$. For each $U \in \mathcal{U}_{X,x}$, let $i_U: \Gamma U \rightarrow \mathcal{O}_{X, x}$ be the natural map into the colimit and $\mathfrak{c}_{\tilde{x},U} \in \Cong \Gamma U$ the prime congruence corresponding to $\tilde{x}$. We define the congruence
\[
\mathfrak{c}_{\tilde{x}} := \Bigg\langle \underset{U \in \mathcal{U}_x}{\bigcup} i_{U,*}(\mathfrak{c}_{\tilde{x},U}) \Bigg\rangle
\]
on $\mathcal{O}_{X,x}$, and the pointed group
\[
\kappa(\tilde{x}) := \mathcal{O}_{X,x} / \mathfrak{c}_{\tilde{x}}.
\]
By \cite[Lemma 2.17]{Lorscheid-Ray23}, $\mathfrak{c}_{\tilde{x}}$ is prime and for any $U \in \mathcal{U}_{X,x}$ one has $\mathfrak{c}_{\tilde{x}} = i_{U,*}(\mathfrak{c}_{\tilde{x},U})$ and $\kappa(\tilde{x}) \simeq \textup{Frac}(\Gamma U  / \mathfrak{c}_{\tilde{x},U})$.

A morphism of monoid schemes $\varphi: X \rightarrow Y$ induces an  injective morphism of pointed groups 
\[
\kappa\big(\varphi^\textup{cong}(\tilde{x})\big) \hookrightarrow \kappa(\tilde{x})
\]
for every  $\tilde{x} \in X^\textup{cong}$.

\section{The strong congruence space}

\begin{df}
\label{df-domain}
A \textit{domain} is a pointed monoid $A$ that is isomorphic to a pointed submonoid of $K^\bullet$ for some field $K$. A \textit{strong} prime congruence on $A$ is a congruence $\mathfrak{c}$ such that $A/ \mathfrak{c}$ is a domain. We denote the set of strong prime congruences on A by $\SCong A$. 

Let $k$ be a pointed monoid and $A$ a $k$-algebra. A \textit{$k$-congruence} on $A$ is a strong prime congruence $\mathfrak{c} \in \SCong A$ such that the natural map $k \rightarrow A/ \mathfrak{c}$ is injective. The set of $k$-congruences of $A$ is denoted by $\SCong_k A$. Note that $\SCong A = \SCong_{\F_1} A$ and $\SCong_k A$ is empty if $k$ is not a domain or if the map $k \rightarrow A$ is not injective.

A \textit{$k$-prime} of $A$ is a prime ideal $P \in \MSpec A$ such that the natural map $k\rightarrow A/P$ is injective. The set of $k$-primes of $A$ is denoted by $\MSpec_k A$. Note that if $\mathfrak{c} \in \SCong_k A$, then $I_\mathfrak{c} \in \MSpec_k A$.
\end{df}

\begin{rem}
If $A$ is a domain, then $A$ is integral and for every $\alpha$ in $A$ and $n \geq 1$ one has $\#\{x\in A|\, x^n = \alpha \}\leq n$. \autoref{intrinsic domains} shows that these two properties characterize domains.
\end{rem}

\begin{rem}
If $A$ is a domain, then any pointed submonoid of $\textup{Frac}(A)$ is a domain.
\end{rem}

\begin{rem}
Throughout this text the term ``domain'' only refers to pointed monoids as presented in \autoref{df-domain}, and not to nonzero commutative rings without zero divisors.
\end{rem}

\begin{lemma}
\label{k-localization}
Let $A$ be a $k$-algebra and $S$ a multiplicative subset of $A$. Then the map $\mathfrak{c} \mapsto S^{-1}\mathfrak{c}$ gives a bijection 
\[
\{\mathfrak{c} \in \SCong_k A \mid I_\mathfrak{c} \cap S = \emptyset\} \xrightarrow{\sim} \SCong_k S^{-1}A.
\]
\end{lemma}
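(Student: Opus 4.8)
The plan is to leverage the already-established bijection of \autoref{congruence-localization}, which handles prime congruences and localization, and then to check that it restricts correctly to the strong and $k$-compatible subsets. First I would observe that \autoref{congruence-localization} gives a bijection between $\{\mathfrak{c} \in \Cong A \mid I_\mathfrak{c}\cap S = \emptyset\}$ and $\Cong S^{-1}A$, with mutually inverse maps $\iota_{S,*}$ and $\iota_S^*$. Since $S^{-1}\mathfrak{c}$ is by notation $\iota_{S,*}(\mathfrak{c})$, the map in the statement is the restriction of this established bijection. So the entire content of the lemma is the claim that this restriction has the right domain and codomain: namely, that for $\mathfrak{c}$ a prime congruence with $I_\mathfrak{c}\cap S = \emptyset$, we have $\mathfrak{c}\in\SCong_k A$ if and only if $S^{-1}\mathfrak{c}\in\SCong_k S^{-1}A$.

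The key algebraic input I would isolate is a natural isomorphism of quotients. Writing $\overline{S}$ for the image of $S$ in the quotient $A/\mathfrak{c}$, I would show that
\[
(S^{-1}A)/(S^{-1}\mathfrak{c}) \;\simeq\; \overline{S}^{\,-1}(A/\mathfrak{c})
\]
as pointed monoids, compatibly with the maps from $k$. This is the monoid analogue of the standard fact that localization commutes with quotients, and it should follow directly from the universal properties (or from an explicit computation with representatives $\tfrac{a}{s}$), using that $\iota_{S,*}$ is surjective onto the relevant congruence.

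With this isomorphism in hand, the two conditions defining $\SCong_k$ transfer cleanly. For the domain condition: if $A/\mathfrak{c}$ embeds in $K^\bullet$ for a field $K$, then its localization $\overline{S}^{\,-1}(A/\mathfrak{c})$ embeds in the same $K^\bullet$ (one sends $\tfrac{\bar a}{\bar s}$ to the product in $K$), so $(S^{-1}A)/(S^{-1}\mathfrak{c})$ is again a domain; conversely a submonoid of a domain is a domain by the remark following \autoref{df-domain}, and $A/\mathfrak{c}$ includes into its localization because $A/\mathfrak{c}$ is integral (being a domain forces cancellativity, so $I_\mathfrak{c}\cap S = \emptyset$ makes the localization map injective). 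For the $k$-injectivity condition: the composite $k \to A/\mathfrak{c} \to \overline{S}^{\,-1}(A/\mathfrak{c})$ factors the map $k \to (S^{-1}A)/(S^{-1}\mathfrak{c})$, and since $A/\mathfrak{c}\hookrightarrow\overline{S}^{\,-1}(A/\mathfrak{c})$ is injective, the composite is injective exactly when $k \to A/\mathfrak{c}$ is. Hence membership in $\SCong_k$ is preserved in both directions.

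The main obstacle I anticipate is not conceptual but the careful verification that $A/\mathfrak{c}$ genuinely injects into its localization $\overline{S}^{\,-1}(A/\mathfrak{c})$ in the reverse direction; this requires knowing $A/\mathfrak{c}$ is integral, which is guaranteed when $\mathfrak{c}$ is a strong prime congruence but must be invoked explicitly. Once that injectivity is secured, both the domain property and the $k$-injectivity property pass through the chain of maps automatically, and the lemma reduces to the restriction statement together with the quotient-localization isomorphism.
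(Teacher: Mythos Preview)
Your proposal is correct and matches the paper's approach: both reduce to the bijection of \autoref{congruence-localization} and then verify that the domain and $k$-injectivity conditions transfer via the isomorphism $S^{-1}A/S^{-1}\mathfrak{c}\simeq\overline{S}^{\,-1}(A/\mathfrak{c})$, using an embedding into a field $K^\bullet$ for the forward direction and the inclusion $A/\mathfrak{c}\hookrightarrow\overline{S}^{\,-1}(A/\mathfrak{c})$ for the reverse. One minor sharpening of your last paragraph: in the reverse direction the integrality of $A/\mathfrak{c}$ follows from $\mathfrak{c}$ being \emph{prime} (which the bijection of \autoref{congruence-localization} already guarantees), not from it being strong---the paper invokes exactly this.
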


\begin{proof}
Let $\mathfrak{c}$ be a $k$-congruence on $A$ with $I_\mathfrak{c} \cap S = \emptyset$. Then there exists a field $K$ and an inclusion $A/ \mathfrak{c} \hookrightarrow K^\bullet$. As $K^\bullet$ is a pointed group, the composition $A \rightarrow A / \mathfrak{c} \rightarrow K^\bullet$ induces a map $S^{-1}A \rightarrow K^\bullet$ whose kernel is $S^{-1}\mathfrak{c}$. Thus $S^{-1}\mathfrak{c} \in \SCong S^{-1}A$. As the diagram
\[
\begin{tikzcd}
k \arrow{r}{} \arrow{rd}{} & A \arrow{r}{} \arrow[swap]{d}{ } & A/ \mathfrak{c} \arrow{d}{ } \arrow{r}{} & K^\bullet\\%
 & S^{-1}A \arrow{r}{ }  & S^{-1}A/ S^{-1}\mathfrak{c} \arrow{ru}{} 
\end{tikzcd}
\]
commutes and the map $k \rightarrow K^\bullet$ given by the upper row is injective, we conclude that $S^{-1}\mathfrak{c} \in \SCong_k A$.

By \autoref{congruence-localization}, the map $\mathfrak{c} \mapsto S^{-1}\mathfrak{c}$ is injective and given $\mathfrak{d}$ in $\SCong_k S^{-1}A$, there exists $\mathfrak{r} \in \Cong A$ such that $\mathfrak{d} = S^{-1}\mathfrak{r}$. It only remains to show that $\mathfrak{r}$ is a $k$-congruence. The composition
\[
k \rightarrow A \rightarrow A/\mathfrak{r} \rightarrow S^{-1}\big(A/\mathfrak{r}\big) \simeq S^{-1}A / S^{-1}\mathfrak{r}
\]
is injective, thus $k \rightarrow A/ \mathfrak{r}$ is injective. As $A/\mathfrak{r}$ is integral and $S^{-1}A / S^{-1}\mathfrak{r}$ is a domain, $A/\mathfrak{r}$ is a domain.
\end{proof}

\begin{lemma}
Let $A$ be a $k$-algebra, $\mathfrak{d}$ a congruence on $A$ and $\pi: A \twoheadrightarrow A / \mathfrak{d}$ the natural projection. Then the map $\mathfrak{c} \mapsto \pi_*(\mathfrak{c})$ gives a bijection
\[
\{\mathfrak{c} \in \SCong_k A \mid \mathfrak{d} \subseteq \mathfrak{c}\} \xrightarrow{\sim} \SCong_k (A/\mathfrak{d}).
\]
\end{lemma}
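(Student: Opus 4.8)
The plan is to realize the desired bijection as the restriction of the general correspondence between congruences on $A$ containing $\mathfrak{d}$ and congruences on the quotient $B := A/\mathfrak{d}$, and then to check that this correspondence carries the defining conditions of a $k$-congruence back and forth. I would take the inverse map to be the pullback $\mathfrak{e} \mapsto \pi^*(\mathfrak{e})$ along $\pi$, and verify that $\pi_*$ and $\pi^*$ are mutually inverse on the relevant classes. The entire statement then reduces to a single structural observation: whenever $\mathfrak{d}\subseteq\mathfrak{c}$, the quotients $A/\mathfrak{c}$ and $B/\pi_*(\mathfrak{c})$ are canonically isomorphic as $k$-algebras, and all conditions defining $\SCong_k$ are intrinsic to the quotient $k$-algebra.

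First I would establish the correspondence at the level of arbitrary (not necessarily prime) congruences. Since $\pi$ is surjective, the push-forward admits a concrete description, namely $\pi_*(\mathfrak{c}) = \{(\pi(a),\pi(b)) \mid (a,b)\in\mathfrak{c}\}$, and I would check directly that this image is already a congruence: reflexivity and symmetry are immediate, transitivity uses $\mathfrak{d}=\congker(\pi)\subseteq\mathfrak{c}$ to bridge two elements having the same image under $\pi$, and compatibility uses surjectivity to write any scalar of $B$ as $\pi(\lambda)$. Dually, $\pi^*(\mathfrak{e})=\{(x,y)\mid(\pi(x),\pi(y))\in\mathfrak{e}\}$ always contains $\congker(\pi)=\mathfrak{d}$. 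The identity $\pi_*\pi^*(\mathfrak{e})=\mathfrak{e}$ is immediate from surjectivity, while $\pi^*\pi_*(\mathfrak{c})=\mathfrak{c}$ uses $\mathfrak{d}\subseteq\mathfrak{c}$ to absorb the pairs $(x,a)$ with $\pi(x)=\pi(a)$ into $\mathfrak{c}$. Together these give a bijection between $\{\mathfrak{c}\mid\mathfrak{d}\subseteq\mathfrak{c}\}$ and the set of all congruences on $B$.

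Next I would upgrade this to the $k$-congruence setting through the isomorphism of quotients. For $\mathfrak{d}\subseteq\mathfrak{c}$ the composite $A\xrightarrow{\pi} B\twoheadrightarrow B/\pi_*(\mathfrak{c})$ is surjective with congruence kernel $\pi^*(\pi_*(\mathfrak{c}))=\mathfrak{c}$, so it descends to an isomorphism $A/\mathfrak{c}\xrightarrow{\sim}B/\pi_*(\mathfrak{c})$; moreover this is an isomorphism of $k$-algebras, because the $k$-structure on $B$ is by definition $k\to A\xrightarrow{\pi}B$, so the triangle $k\to A/\mathfrak{c}\xrightarrow{\sim}B/\pi_*(\mathfrak{c})$ commutes. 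Since the two conditions defining membership in $\SCong_k$ — that the quotient be a domain and that $k$ inject into it — depend only on the quotient $k$-algebra up to isomorphism, each transfers across this isomorphism in both directions. Hence $\mathfrak{c}\in\SCong_k A$ with $\mathfrak{d}\subseteq\mathfrak{c}$ if and only if $\pi_*(\mathfrak{c})\in\SCong_k B$, and symmetrically $\pi^*$ sends $\SCong_k B$ into $\{\mathfrak{c}\in\SCong_k A\mid\mathfrak{d}\subseteq\mathfrak{c}\}$, so the bijection of the previous step restricts to the claimed one.

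The routine verifications are the congruence axioms for the image $\pi_*(\mathfrak{c})$ and the two inversion identities. The only place where care is genuinely needed is the identity $\pi^*\pi_*(\mathfrak{c})=\mathfrak{c}$, which is exactly where the hypothesis $\mathfrak{d}\subseteq\mathfrak{c}$ is used and where restricting to congruences above $\mathfrak{d}$ becomes essential, since it fails for arbitrary $\mathfrak{c}$. Once the isomorphism $A/\mathfrak{c}\cong B/\pi_*(\mathfrak{c})$ is in hand the transfer of the domain and $k$-injectivity conditions is automatic; this is the main conceptual point and removes any need to argue separately about embeddings into fields.
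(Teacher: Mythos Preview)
Your proof is correct and follows essentially the same approach as the paper: both rest on the isomorphism $A/\mathfrak{c}\cong (A/\mathfrak{d})/\pi_*(\mathfrak{c})$ of $k$-algebras to transfer the domain and $k$-injectivity conditions. The only difference is that the paper cites \cite[Prop.~4.13]{Lorscheid-Ray23} for the underlying bijection $\{\mathfrak{c}\in\Cong A\mid\mathfrak{d}\subseteq\mathfrak{c}\}\xrightarrow{\sim}\Cong(A/\mathfrak{d})$, whereas you verify this correspondence from scratch.
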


\begin{proof}
By \cite[Prop. 4.13]{Lorscheid-Ray23}, the map $\mathfrak{c} \mapsto \pi_*(\mathfrak{c})$ gives a bijection
\[
\{\mathfrak{c} \in \Cong A \mid \mathfrak{d} \subseteq \mathfrak{c}\} \xrightarrow{\sim} \Cong (A/\mathfrak{d}).
\]
As $A / \mathfrak{c} \simeq (A / \mathfrak{d}) \big/ \pi_*(\mathfrak{c})$ for any congruence $\mathfrak{c}$ with $\mathfrak{d} \subseteq \mathfrak{c}$, the result follows.
\end{proof}

\begin{lemma}
\label{strong pullback}
Let $k$ be a domain, $f : A \rightarrow B$ a morphism of $k$-algebras and $\mathfrak{q}$ a $k$-congruence on $B$. Then $f^*(\mathfrak{q})$ is a $k$-congruence of $A$. This shows that the map $f^*$ restricts to a continuous map $f^*: \SCong_k B \rightarrow \SCong_k A$.
\end{lemma}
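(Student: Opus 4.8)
The plan is to exhibit a natural injective morphism $A/f^*(\mathfrak{q}) \hookrightarrow B/\mathfrak{q}$ of pointed monoids and to read off both the domain property and the $k$-injectivity from it; continuity will then come for free. First I would identify $f^*(\mathfrak{q})$ as the congruence kernel of the composite $g := \pi \circ f$, where $\pi: B \twoheadrightarrow B/\mathfrak{q}$ is the natural projection. Indeed, $(x,y) \in f^*(\mathfrak{q})$ holds iff $(f(x), f(y)) \in \mathfrak{q}$, iff $g(x) = g(y)$, so $f^*(\mathfrak{q}) = \congker(g)$. Since $g$ factors as $A \twoheadrightarrow A/\congker(g) \to B/\mathfrak{q}$ with the second arrow injective, this gives the desired embedding $A/f^*(\mathfrak{q}) \hookrightarrow B/\mathfrak{q}$. (That $f^*(\mathfrak{q})$ is a prime congruence is already recorded in the excerpt, so only the strong and $k$ conditions need checking.)

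Next I would invoke that $\mathfrak{q}$ is strong, so $B/\mathfrak{q}$ is a domain and hence embeds into $K^\bullet$ for some field $K$. Composing with the embedding above, $A/f^*(\mathfrak{q})$ also embeds into $K^\bullet$, so it is a domain; this is just the observation that a pointed submonoid of a domain is again a domain. Thus $f^*(\mathfrak{q})$ is a strong prime congruence on $A$.

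For the $k$-injectivity I would use that $f$ is a morphism of $k$-algebras, so it commutes with the structure maps $k \to A$ and $k \to B$. Consequently the composite $k \to A/f^*(\mathfrak{q}) \hookrightarrow B/\mathfrak{q}$ coincides with $k \to B \to B/\mathfrak{q}$, which is injective because $\mathfrak{q}$ is a $k$-congruence. Since a composite factoring through $k \to A/f^*(\mathfrak{q})$ is injective, the map $k \to A/f^*(\mathfrak{q})$ is itself injective. Combined with the previous paragraph, this shows $f^*(\mathfrak{q}) \in \SCong_k A$.

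Finally, continuity is inherited at no cost: the map $f^*: \Cong B \to \Cong A$ is already continuous by the functoriality recorded earlier, and since $\SCong_k$ carries the subspace topology induced from $\Cong$, the well-defined restriction $f^*: \SCong_k B \to \SCong_k A$ is continuous as well. The only real content is the identification of $f^*(\mathfrak{q})$ as the congruence kernel of $A \to B/\mathfrak{q}$; once that embedding is in hand, every remaining step is formal.
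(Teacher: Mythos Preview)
Your proof is correct and follows essentially the same approach as the paper: both construct the injection $A/f^*(\mathfrak{q}) \hookrightarrow B/\mathfrak{q}$, deduce the domain property from it, and then use commutativity of the $k$-structure maps to obtain injectivity of $k \to A/f^*(\mathfrak{q})$. You are slightly more explicit in identifying $f^*(\mathfrak{q})$ as $\congker(\pi\circ f)$ and in spelling out the continuity argument via the subspace topology, but the substance is identical.
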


\begin{proof}
Note that $f$ induces an injection $A/f^*(\mathfrak{q}) \hookrightarrow B/ \mathfrak{q}$. As $B/ \mathfrak{q}$ is a domain, one has that $A/f^*(\mathfrak{q})$ is also a domain. As the diagram
\[
\begin{tikzcd}
 & A \arrow{r}{} \arrow{d}{ } & B \arrow{d}{ } \\%
k \arrow{r}{} \arrow{ru}{} & A/ f^*(\mathfrak{q}) \arrow[hook]{r}{ }  & B/ \mathfrak{q}
\end{tikzcd}
\]
commutates and the map $k \rightarrow B/ \mathfrak{q}$ given by the bottom row is injective, one has that $k \rightarrow A/ f^*(\mathfrak{q})$ is injective. Thus $f^*(\mathfrak{q}) \in \SCong_k A$.
\end{proof}

Let $k$ be a pointed monoid and $X$ a $k$-scheme. For $U, V \subseteq X$ affine open with $U \subseteq V$, let $\iota_{UV}: U \hookrightarrow V$ be the inclusion. Let $\mathcal{U}_{X, k}^\textup{scong}$ be the category whose objects are the topological spaces $U_k^\textup{scong}:= \SCong_k \Gamma U$ for affine open subschemes $U$ of $X$, with
\vspace{1mm}
\[
\textup{Hom}_{\mathcal{U}_{X, k}^{\textup{cong}}}(U_k^\textup{scong},V_k^\textup{scong}) = \smash{\left\{\begin{array}{cl}
      \{(\Gamma\iota_{UV})^*\} &\text{if } U\subseteq  V\\
    {}\emptyset &\text{if } U\not\subseteq V.
    \end{array}\right.}
\]

\begin{df}
 The \textit{strong $k$-congruence space} of $X$ is the topological space
\[
\SCong_k X := \textup{colim} \, \mathcal{U}_{X, k}^\textup{scong}.
\]
\end{df}

Note that $\SCong_k X$ comes with a natural inclusion to $X^\textup{cong}$. We denote the restriction of the projection $\pi_X: X^\textup{cong} \rightarrow X$ to $\SCong_k X$ again by $\pi_X$.

\begin{lemma}
\label{principal intersecion}
Let $U$ and $V$ be affine open subschemes of a monoid scheme $X$ and $x \in U\cap V$. Then there exists an affine open neighborhood $W \subseteq U\cap V$ of $x$ that is a principal open subset of both $U$ and $V$.
\end{lemma}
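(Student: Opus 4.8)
The plan is to mirror the classical proof of the analogous fact for Grothendieck schemes. Write $U = \MSpec A$ and $V = \MSpec B$, so that $A = \Gamma U$ and $B = \Gamma V$, and for $f \in A$ let $U_f = \{I \in \MSpec A \mid f \notin I\}$ be the corresponding principal open of $U$, with $\Gamma U_f = A[f^{-1}]$; similarly write $V_g$ for $g \in B$. The goal is to manufacture a single subset that is simultaneously principal in both charts, and the key will be to transport a defining element of a principal open of $V$ across a localization and recognize it as a defining element of a principal open of $U$.

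First I would exploit that $\{U_a \mid a \in A\}$ is a basis for the topology of $\MSpec A$. Since $U \cap V$ is open in $U$ and contains $x$, there is $f \in A$ with $x \in U_f \subseteq U \cap V$. Now $U_f = \MSpec A[f^{-1}]$ is itself an affine open of $V$, so, using that the principal opens $\{V_b \mid b \in B\}$ form a basis of $\MSpec B$, I can choose $g \in B$ with $x \in V_g \subseteq U_f$. At this stage $W := V_g$ is already an open neighborhood of $x$ contained in $U \cap V$ and principal in $V$; what remains is to verify that it is also principal in $U$.

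The crux is the identification $V_g = U_{fa}$ for a suitable $a \in A$. Since $U_f \subseteq V$, the inclusion induces a restriction homomorphism $B = \Gamma V \to \Gamma U_f = A[f^{-1}]$; let $g'$ denote the image of $g$ and write $g' = a/f^n$ with $a \in A$ and $n \geq 0$. Because $V_g \subseteq U_f$, the subset $V_g$ of $U_f$ is exactly the locus in $\MSpec A[f^{-1}]$ where $g'$ avoids the prime, i.e. the principal open $(U_f)_{g'}$. As $f$ is invertible in $A[f^{-1}]$, the elements $g' = a/f^n$ and $a/1$ cut out the same principal open, since a prime of $A[f^{-1}]$ contains $a/f^n$ if and only if it contains $a/1$; hence $(U_f)_{g'} = (U_f)_{a/1}$. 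Under the bijection $\MSpec A[f^{-1}] \cong U_f \subseteq \MSpec A$ (contraction along $A \to A[f^{-1}]$) this locus corresponds to $U_f \cap U_a$, and since $I$ prime forces $fa \in I \Leftrightarrow f \in I \text{ or } a \in I$, one has $U_f \cap U_a = U_{fa}$. Therefore $W = V_g = U_{fa}$ is principal in both $U$ and $V$, and $x \in W \subseteq U \cap V$ as required.

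I expect the only genuine obstacle to be the bookkeeping in the third paragraph: correctly matching principal opens of $U_f$ with principal opens of $U$ under the localization bijection, and justifying that rescaling $g'$ by the unit $1/f^n$ does not alter the principal open it defines. Both reductions are elementary, relying respectively on the compatibility of localization with principal opens and on the primeness identity $U_f \cap U_a = U_{fa}$; these are precisely the facts recorded in the preliminaries on the geometry of monoids, so no new machinery should be needed.
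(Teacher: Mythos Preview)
Your argument is correct and is exactly the classical proof one gives for Grothendieck schemes: shrink to a principal open $U_f\subseteq U\cap V$, then to a principal open $V_g\subseteq U_f$, and transport $g$ through the restriction $B\to A[f^{-1}]$ to recognize $V_g$ as $U_{fa}$. The bookkeeping you flag (units do not change principal opens; $U_f\cap U_a=U_{fa}$) is unproblematic.

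The paper, however, takes a genuinely different and shorter route that exploits a feature peculiar to monoid schemes: by \cite[Lemma~2.4]{Cortinasetal15} every affine open of an affine monoid scheme $\MSpec A$ is already of the form $\MSpec A_\mathfrak{p}$ for some prime $\mathfrak{p}$, and by \cite[Lemma~1.3]{Cortinasetal15} any such localization is in fact a principal open. Hence one simply picks \emph{any} affine open $W\subseteq U\cap V$ containing $x$ and observes that $W$ is automatically principal in $U$ and, by the same argument applied to $V$, also principal in $V$. This actually proves the stronger statement that every affine open contained in $U\cap V$ is principal in both charts, not just that some such $W$ exists. Your approach, by contrast, is self-contained (no external citations), works verbatim for ordinary schemes, and makes the construction of $W$ explicit; the paper's approach is quicker but leans on the special combinatorics of $\MSpec$ of a commutative monoid.
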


\begin{proof}
Let $W \subseteq U \cap V$ be an affine open neighborhood of $x$. Let $A:= \Gamma U$. By \cite[Lemma 2.4]{Cortinasetal15}, there exists $\mathfrak{p} \in \MSpec A$ such that $W = \MSpec A_\mathfrak{p}$. By \cite[Lemma 1.3]{Cortinasetal15}, $W$ is a principal open of $U$. 

Analogously, $W$ is a principal open of $V$.
\end{proof}

\begin{lemma}
\label{strong covering}
Let $X$ be a $k$-scheme and $\{U_i\}_{i \in I}$ a collection of open subschemes that covers $X$. Then $\{\SCong_k U_i\}_{i \in I}$ is an open covering of $\SCong_k X$. 
\end{lemma}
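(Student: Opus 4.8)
The plan is to work directly with the colimit topology and reduce everything to the affine situation, where the two preceding lemmas do the work. First note that if $k$ is not a domain then every strong $k$-congruence space is empty (as recorded after \autoref{df-domain}), so the statement holds trivially; hence I assume $k$ is a domain, so that \autoref{strong pullback} applies and all the pullback maps $(\Gamma\iota_{UV})^*$ restrict to maps between strong $k$-congruence spaces. Write $\phi_V\colon V_k^\textup{scong} \to \SCong_k X$ for the canonical map of each affine open $V$ into the colimit, and recall that by the final topology defining the colimit a subset $O \subseteq \SCong_k X$ is open if and only if $\phi_V^{-1}(O)$ is open in $V_k^\textup{scong}$ for every affine open $V$ of $X$.

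The core step is to establish, for affine opens $W$ and $V$ of $X$, the identity
\[
\phi_V^{-1}\big(\im\phi_W\big) = \{\mathfrak{c} \in \SCong_k \Gamma V \mid I_\mathfrak{c} \in W \cap V\}.
\]
For the inclusion ``$\subseteq$'' I would use the projection $\pi_X$: if $\phi_V(\mathfrak{c}) = \phi_W(\mathfrak{e})$, then applying $\pi_X$ gives $I_\mathfrak{c} = I_\mathfrak{e}$, a point lying in both $W$ and $V$. For ``$\supseteq$'', given $\mathfrak{c}$ with $x := I_\mathfrak{c} \in W \cap V$, I would invoke \autoref{principal intersecion} to obtain an affine open $Z \subseteq W\cap V$ containing $x$ that is principal in both $W$ and $V$; since $x \in Z$, the multiplicative set $S$ cutting out $Z$ in $\Gamma V$ misses $I_\mathfrak{c}$, so by \autoref{k-localization} the congruence $\mathfrak{c}$ localizes to a $k$-congruence $S^{-1}\mathfrak{c}$ on $\Gamma Z$. \autoref{congruence-localization} identifies the pullback of $S^{-1}\mathfrak{c}$ along $\Gamma V \to \Gamma Z$ with $\mathfrak{c}$ itself, whence $\phi_V(\mathfrak{c}) = \phi_Z(S^{-1}\mathfrak{c})$; transporting the same class along $Z \subseteq W$ via \autoref{strong pullback} exhibits it in $\im\phi_W$. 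The right-hand side equals $\SCong_k\Gamma V \cap \pi_V^{-1}(W\cap V)$, where $\pi_V\colon \Cong\Gamma V \to \MSpec\Gamma V$, $\mathfrak{c}\mapsto I_\mathfrak{c}$, is continuous because the preimage of a basic open $U_h$ is $U_{h,0}$; as $W\cap V$ is open in $\MSpec \Gamma V$, this set is open. Thus $\phi_V^{-1}(\im\phi_W)$ is open for every $V$, and so $\im\phi_W = \SCong_k W$ is open in $\SCong_k X$ for every affine open $W$.

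To finish I would bootstrap to arbitrary open subschemes. For a general open $U_i$, the canonical map $\psi_i\colon \SCong_k U_i \to \SCong_k X$ obtained from the universal property has image $\bigcup_{W\subseteq U_i \text{ affine}} \im\phi_W$, a union of open sets by the previous paragraph, hence open. For the covering property, any point of $\SCong_k X$ is $\phi_{W'}(\mathfrak{c})$ for some affine $W'$ and some $\mathfrak{c}$ with $I_\mathfrak{c}=:x \in W'$; choosing $i$ with $x \in U_i$ and a principal open $Z$ of $W'$ with $x \in Z \subseteq W'\cap U_i$ (the $U_h$ form a basis of $\MSpec\Gamma W'$), the localization argument above rewrites the point as $\phi_Z(S^{-1}\mathfrak{c})$ with $Z \subseteq U_i$ affine, placing it in $\im\psi_i$. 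Injectivity of $\psi_i$, and hence that it is an embedding onto its open image (completing the meaning of ``open covering''), follows from the factorization $\SCong_k U_i \subseteq U_i^\textup{cong} \hookrightarrow X^\textup{cong}$ through the open embedding of ordinary congruence spaces.

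I expect the main obstacle to be the core identity, and specifically the ``$\supseteq$'' inclusion: this is where one must control the identifications made by the colimit and match them exactly with localization of congruences. The delicate point is that a general affine inclusion $U \subseteq V$ need not be a localization, so the colimit relation cannot be read off directly; \autoref{principal intersecion} is precisely what repairs this, by passing to a common principal refinement $Z$ on which \autoref{congruence-localization} and \autoref{k-localization} apply.
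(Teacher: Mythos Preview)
Your argument is correct, but it takes a different route from the paper's. The paper's proof is much shorter because it leans on the already established fact (Proposition 2.13 in \cite{Lorscheid-Ray23}, quoted just before the definition of $\SCong_k X$) that $\{U_i^\textup{cong}\}_{i\in I}$ is an open covering of $X^\textup{cong}$. Given this, and given that $\SCong_k X$ is declared to be a subspace of $X^\textup{cong}$, the whole lemma reduces to the single equality $\SCong_k U = U^\textup{cong}\cap\SCong_k X$ for $U\subseteq X$ open; the paper verifies only the nontrivial inclusion $U^\textup{cong}\cap\SCong_k X\subseteq\SCong_k U$, using \autoref{principal intersecion} and \autoref{k-localization} in essentially the same way you do in your ``$\supseteq$'' step. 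By contrast, you bypass the congruence-space covering result entirely and analyze the colimit topology of $\SCong_k X$ directly via your preimage identity $\phi_V^{-1}(\im\phi_W)=\{\mathfrak c\in\SCong_k\Gamma V\mid I_\mathfrak c\in W\cap V\}$. This is more self-contained and makes the role of $\pi_X$ and the localization bijection explicit, at the cost of reproving openness that the congruence-space result already supplies. The key inputs---\autoref{principal intersecion} and \autoref{k-localization}---are the same in both approaches, and both rely on the asserted embedding $\SCong_k X\hookrightarrow X^\textup{cong}$ (you for injectivity/embedding of $\psi_i$, the paper for the very formulation of the reduction).
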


\begin{proof}
As $\{U_i^\textup{cong}\}_{i \in I}$ is an open covering of $X_k^\textup{cong}$, it is enough to show that if $U$ is an open subscheme of $X$, then $\SCong_k U = U^\textup{cong} \cap \SCong_k X$. 

Let $x \in U^\textup{cong} \cap \SCong_k X$. Then there exists $V = \MSpec M \subseteq X$ affine open such that $x \in V_k^\textup{scong}$. Let $W = \MSpec N \subseteq U$ affine open such that $x\in W^\textup{cong}$. By \autoref{principal intersecion}, there exists $Y = \MSpec A \subseteq W \cap V$ affine open containing $\pi_X(x)$ that is a principal open of both $W$ and $V$.
Thus $x \in Y^\textup{cong} \subseteq (W \cap V)^\textup{cong}$. Let $s \in M$ such that $M[s^{-1}] \simeq A$. One has $x \in \SCong_k M \cap \Cong M[s^{-1}]$. By \autoref{k-localization}, this implies 
\[
x \in \SCong_k M[s^{-1}] = Y_k^\textup{scong} \subseteq \SCong_k U.
\]
\end{proof}

\begin{prop}
There is an extension of the association $X \mapsto \SCong_k X$ to a subfunctor of $(-)^\textup{cong}: \MSch / k \rightarrow \textup{Top}$.
\end{prop}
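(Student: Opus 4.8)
The plan is to exhibit $\SCong_k(-)$ as a subfunctor by checking, for each morphism of $k$-schemes, that the continuous map already produced on congruence spaces carries the strong $k$-congruence subspace into the strong $k$-congruence subspace; functoriality then comes for free from that of $(-)^\textup{cong}$ (viewed on $\MSch/k$ through the forgetful functor to $\MSch$). If $k$ is not a domain, then $\SCong_k X = \emptyset$ for every $k$-scheme $X$ by \autoref{df-domain}, so the claim is vacuous and I may assume $k$ is a domain. On objects there is nothing to do: each $\SCong_k X$ already sits inside $X^\textup{cong}$ as the subspace described right after its definition. The content is thus to show that for every morphism $\varphi \colon X \rightarrow Y$ in $\MSch/k$ one has $\varphi^\textup{cong}(\SCong_k X) \subseteq \SCong_k Y$, and then to set $\SCong_k\varphi := \varphi^\textup{cong}|_{\SCong_k X}$.

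To prove the containment I reduce to the affine-local formula for $(-)^\textup{cong}$. Fix $\tilde x \in \SCong_k X$ and put $x := \pi_X(\tilde x)$. Using continuity of $\varphi$, choose an affine open $V \subseteq Y$ with $\varphi(x) \in V$ and an affine open $U \subseteq \varphi^{-1}(V)$ with $x \in U$; then $\varphi$ restricts to a morphism of affine monoid schemes $U \rightarrow V$, and because $\varphi$ lies over $\MSpec k$ this restriction is induced by a morphism of $k$-algebras $f \colon \Gamma V \rightarrow \Gamma U$. By the description of $(-)^\textup{cong}$ on affines, $\varphi^\textup{cong}$ agrees on $U^\textup{cong}$ with $f^\ast \colon \Cong \Gamma U \rightarrow \Cong \Gamma V$, so $\varphi^\textup{cong}(\tilde x) = f^\ast(\mathfrak c)$, where $\mathfrak c$ is the prime congruence on $\Gamma U$ representing $\tilde x$. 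By \autoref{strong covering}, $\tilde x \in U^\textup{cong} \cap \SCong_k X = \SCong_k U$, so $\mathfrak c$ is in fact a $k$-congruence. Since $k$ is a domain and $f$ is a morphism of $k$-algebras, \autoref{strong pullback} yields $f^\ast(\mathfrak c) \in \SCong_k \Gamma V$, hence $\varphi^\textup{cong}(\tilde x) \in \SCong_k V \subseteq \SCong_k Y$, as desired.

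With the containment established, $\SCong_k\varphi$ is continuous because it is the restriction of the continuous map $\varphi^\textup{cong}$ to the subspace $\SCong_k X$, and the functor axioms $\SCong_k(\mathrm{id}_X) = \mathrm{id}$ and $\SCong_k(\psi\circ\varphi) = \SCong_k\psi \circ \SCong_k\varphi$ are inherited verbatim from $(-)^\textup{cong}$. The only step demanding real care is the affine-local identification of $\varphi^\textup{cong}|_{U^\textup{cong}}$ with the congruence pullback $f^\ast$, together with the verification via \autoref{strong covering} that the congruence representing $\tilde x$ is genuinely a $k$-congruence on $\Gamma U$; once these are in place, \autoref{strong pullback} does the essential work and the domain hypothesis on $k$ enters precisely there.
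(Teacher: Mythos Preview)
Your proof is correct and follows essentially the same approach as the paper: reduce to the affine situation, invoke \autoref{strong covering} to identify $\SCong_k U$ with $U^\textup{cong}\cap\SCong_k X$, and apply \autoref{strong pullback} to conclude. The only differences are cosmetic—you argue pointwise while the paper uses a pair of affine covers $\{U_\alpha\}$, $\{V_\alpha\}$—and you explicitly dispose of the case where $k$ is not a domain, which the paper leaves implicit.
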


\begin{proof}\
One knows that $\SCong_k X$ is a subspace of $X^\textup{cong}$ for any $X \in \MSch / k$.

Next we define the functor $\SCong_k(-)$ on morphisms. Let $\varphi: X \rightarrow Y$ be a morphism in $\MSch / k$. Let $\{U_\alpha\}_{\alpha \in \Lambda}$ be an affine open covering of $X$ and $\{V_\alpha\}_{\alpha \in \Lambda}$ an affine open covering of $Y$ such that $\varphi(U_\alpha) \subseteq V_\alpha$ for any $\alpha \in \Lambda$. Let $\varphi_\alpha: U_\alpha \rightarrow V_\alpha$ be the restriction of $\varphi$. Note that $\varphi_\alpha$ is induced by a morphism of pointed monoids $\gamma_\alpha: \Gamma V_\alpha \rightarrow \Gamma U_\alpha$. By \autoref{strong pullback}, $\gamma_\alpha^\textup{cong}(U_{\alpha, k}^\textup{scong}) \subseteq V_{\alpha, k}^\textup{scong}$. By \autoref{strong covering}, $\{U_{\alpha, k}^\textup{scong}\}_{\alpha \in \Lambda}$ is an open covering of $\SCong_k X$ and $\{V_{\alpha, k}^\textup{scong}\}_{\alpha \in \Lambda}$ is an open covering of $\SCong_k Y$. Thus 
\[
\varphi^\textup{cong}(\SCong_k X) = \underset{\alpha \in \Lambda}{\bigcup} \gamma_\alpha^\textup{cong}(U_{\alpha, k}^\textup{scong}) \subseteq \SCong_k Y.
\]
Define $\SCong_k \varphi: \SCong_k X \rightarrow \SCong_k Y$ as the restriction of $\varphi^\textup{cong}$.

The fact that $\SCong_k(-)$ is a subfunctor of $(-)^\textup{cong}$ follows from the construction above.
\end{proof}

\section{Relation to classical scheme theory}
\label{s}

\subsection{The map \texorpdfstring{$\gamma: X_R \rightarrow \SCong X$}{gamma: XR ---> SCong X}} We construct a continuous map $\gamma$ such that the underlying continuous map of $p_R: X_R^\bullet \rightarrow X$ factorizes as 
\[
\begin{tikzcd}
X_R \arrow{r}{\gamma} \arrow[swap]{rd}{p_R} & \SCong X \arrow{d}{\pi_X}\\
 & X.
\end{tikzcd}
\]

Let $A$ be a pointed monoid and $R$ a ring. A morphism of pointed monoids $\varphi: A\rightarrow R^\bullet$ induces a continuous map $\varphi^*: \Spec R \rightarrow \SCong A$ that sends a prime ideal $\mathfrak{p}$ to the strong prime congruence 
\[
\varphi^*(\mathfrak{p}):= \congker(\pi_\mathfrak{p}^\bullet \circ \varphi) = \big\{(a,b)\in A\times A \;\big|\; \overline{\varphi(a)} = \overline{\varphi(b)} \text{ in } R/\mathfrak{p}\big\},
\]
where $\pi_\mathfrak{p}: R \rightarrow R/\mathfrak{p}$ is the natural projection.

\begin{thm}
\label{factorization from schemes}
Let $X$ be a monoid scheme and $R$ a ring. The underlying continuous map of the morphism of monoidal spaces $p_R: X_R^\bullet \rightarrow X$ factorizes through a continuous map $\gamma: X_R \rightarrow \SCong X$ characterized by the property that if $U$ is an affine open of $X$, then $\gamma(U_R) \subseteq \SCong U$ and $\gamma|_{U_R}: U_R \rightarrow \SCong U$ is induced by the inclusion $\Gamma U \rightarrow (\Gamma U_R)^\bullet$.
\end{thm}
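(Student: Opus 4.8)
The plan is to construct $\gamma$ affine-locally and then assemble it, using that both $X_R = \textup{colim}_{U \in \mathcal{U}_X} U_R$ and $\SCong X = \textup{colim}_{U \in \mathcal{U}_X} \SCong U$ are colimits over the \emph{same} index category $\mathcal{U}_X$ of affine opens of $X$ with inclusions. For each affine open $U = \MSpec A$ I would apply the affine construction preceding the statement to the ring $A_R = \Gamma U_R$ and the unit of the adjunction $\iota_A\colon A \to A_R^\bullet$, obtaining a continuous map
\[
\gamma_U := \iota_A^*\colon U_R = \Spec A_R \longrightarrow \SCong A = \SCong U, \qquad \mathfrak p \longmapsto \bigl\{(a,b)\mid \overline{\iota_A(a)}=\overline{\iota_A(b)} \text{ in } A_R/\mathfrak p \bigr\}.
\]
By that construction $\gamma_U$ is continuous and lands in strong congruences, since $A/\gamma_U(\mathfrak p)$ embeds into the multiplicative pointed monoid of the fraction field of the integral domain $A_R/\mathfrak p$, hence is a domain.

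The heart of the argument is to check that $U \mapsto \gamma_U$ is a natural transformation between the diagrams $U \mapsto U_R$ and $U \mapsto \SCong U$. Concretely, for an inclusion $\iota_{UV}\colon U = \MSpec A \hookrightarrow V = \MSpec B$ with restriction $\rho := \Gamma\iota_{UV}\colon B \to A$, I must show that the square
\[
\begin{tikzcd}
U_R \arrow{r}{\gamma_U} \arrow{d}{ } & \SCong U \arrow{d}{\rho^*} \\
V_R \arrow{r}{\gamma_V} & \SCong V
\end{tikzcd}
\]
commutes, where the left vertical map is $\Spec$ of $\rho_R\colon B_R \to A_R$ and $\rho^* = (\Gamma\iota_{UV})^*$ is the structure map of the colimit $\SCong X$. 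Given $\mathfrak p \in \Spec A_R$ with contraction $\mathfrak q = \rho_R^{-1}(\mathfrak p)$, naturality of the adjunction unit yields $\iota_A \circ \rho = \rho_R^\bullet \circ \iota_B$, so $\rho^*(\gamma_U(\mathfrak p))$ consists of the pairs $(x,y) \in B \times B$ with $\overline{\rho_R(\iota_B(x))}=\overline{\rho_R(\iota_B(y))}$ in $A_R/\mathfrak p$. Since the contraction induces an \emph{injection} $B_R/\mathfrak q \hookrightarrow A_R/\mathfrak p$, this is equivalent to $\overline{\iota_B(x)}=\overline{\iota_B(y)}$ in $B_R/\mathfrak q$, that is, to $(x,y) \in \gamma_V(\mathfrak q)$. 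Hence $\rho^*(\gamma_U(\mathfrak p)) = \gamma_V(\mathfrak q)$ and the square commutes.

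With naturality established, the universal property of the colimit in $\textup{Top}$ produces a unique continuous map $\gamma\colon X_R \to \SCong X$ with $\gamma|_{U_R} = \gamma_U$ for every affine open $U$, which is exactly the asserted characterization. (On a non-affine overlap $U_R \cap V_R$ the required agreement is forced by refining to common principal opens via \autoref{principal intersecion} and using that the colimit maps $\SCong W \to \SCong X$ are independent of the chart through which $W$ is viewed.) To see that $\gamma$ factorizes $p_R$, i.e.\ $\pi_X \circ \gamma = p_R$ on underlying spaces, I would compute on an affine chart: the null ideal of $\gamma_U(\mathfrak p)$ is $I_{\gamma_U(\mathfrak p)} = \{a \in A \mid \iota_A(a) \in \mathfrak p\}$, which is precisely the contraction of $\mathfrak p$ to $A$ computed by $p_R|_{U_R}$ as described in \autoref{proj from realization to monoid scheme}. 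Thus $\pi_X \circ \gamma$ and $p_R$ agree on each $U_R$, hence globally.

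I expect the main obstacle to be the naturality verification of the second paragraph: reconciling the two sources of functoriality, namely base extension of monoids to rings and pullback of congruences, and using the injectivity $B_R/\mathfrak q \hookrightarrow A_R/\mathfrak p$ exactly where it is needed to transport an equality in $A_R/\mathfrak p$ back to one in $B_R/\mathfrak q$. Once this square commutes, continuity of $\gamma$ and the factorization identity are purely formal consequences of the colimit description and the null-ideal computation.
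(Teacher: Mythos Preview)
Your proposal is correct and follows essentially the same gluing strategy as the paper: construct $\gamma$ affine-locally from $\iota_A^*$, verify compatibility on the index category $\mathcal{U}_X$, and pass to the colimit, invoking \autoref{principal intersecion} for overlaps. The paper differs only cosmetically---it first reduces to $R=\Z$ via $X_R = X_\Z \times_\Z \Spec R$ and phrases the compatibility as a cocycle check on $U_\Z\cap V_\Z$ rather than as naturality, and it leaves the null-ideal verification $\pi_X\circ\gamma = p_R$ implicit, whereas you spell it out.
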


\begin{proof}
As $X_R = X_\Z \times_\Z \Spec R$, it is enough to show the result for $R = \Z$. The result follows for $X$ affine by \autoref{proj from realization to monoid scheme} and the discussion above. We just need to verify that if $U$ and $V$ are affine open subsets of $X$, then the diagram
\[
\begin{tikzcd}
U_\Z \cap V_\Z \arrow{r}{} \arrow[swap]{d}{} & U_\Z \arrow{d}{} \\%
V_\Z \arrow{r}{}& \SCong X
\end{tikzcd}
\]
commutes. By \cite[Cor. 3.3]{Chuetal12}, $U_\Z \cap V_\Z = (U \cap V)_\Z$ and, by \autoref{principal intersecion}, the intersection $U \cap V$ has a covering by principal open subschemes of both $U$ and $V$. As the diagram
\[
\begin{tikzcd}
U_\Z \arrow[swap]{d}{} & \arrow{l} W_\Z \arrow{r}{}\arrow[swap]{d}{} & V_\Z \arrow[swap]{d}{} \\
\SCong U  \arrow[swap]{dr}{} & \arrow{l}{} \SCong W \arrow{r}{} \arrow{d} & \arrow[swap]{ld} \SCong V \\
 & \SCong X & 
\end{tikzcd}
\]
commutes for every $W$ that is a principal open subscheme of both $U$ and $V$, the result follows.
\end{proof}

\begin{prop}
\label{induced on the residue field by gamma}
The factorization of $p_R$ through $\gamma$, as in \autoref{factorization from schemes}, induces an injective morphism of pointed groups $\gamma_x: \kappa(\gamma(x)) \rightarrow \kappa(x)^\bullet$ for every $x \in X_R$.
\end{prop}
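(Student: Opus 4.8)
The plan is to produce $\gamma_x$ intrinsically from the morphism of monoidal spaces $p_R$ by passing to stalks, and then to identify its congruence kernel with $\mathfrak{c}_{\gamma(x)}$. Write $y := p_R(x) = \pi_X(\gamma(x))$. The morphism $p_R\colon X_R^\bullet \to X$ induces on stalks a morphism of pointed monoids $p^\#_{R,x}\colon \mathcal{O}_{X,y} \to \mathcal{O}_{X_R,x}^\bullet$; composing with the quotient $\mathcal{O}_{X_R,x}^\bullet \twoheadrightarrow (\mathcal{O}_{X_R,x}/\mathfrak{m}_x)^\bullet = \kappa(x)^\bullet$ yields a morphism $\psi\colon \mathcal{O}_{X,y} \to \kappa(x)^\bullet$. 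Since $\kappa(x)$ is a field, $\kappa(x)^\bullet$ is a pointed group, so $\psi$ factors as an injection $\mathcal{O}_{X,y}/\congker(\psi) \hookrightarrow \kappa(x)^\bullet$. The whole point will therefore be to show $\congker(\psi) = \mathfrak{c}_{\gamma(x)}$, for then $\mathcal{O}_{X,y}/\congker(\psi) = \kappa(\gamma(x))$ and I may take $\gamma_x$ to be the resulting injection.

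To compute $\congker(\psi)$, I would fix an affine open $U = \MSpec A$ with $y \in U$, so that $x \in U_R = \Spec(A_R)$ corresponds to a prime $\mathfrak{p}\subseteq A_R$, and set $S := A \setminus y$. Under the identifications $\mathcal{O}_{X,y} = S^{-1}A$ and $\mathcal{O}_{X_R,x} = (A_R)_\mathfrak{p}$, the map $p^\#_{R,x}$ is the localization of the inclusion $\varphi\colon A \to A_R^\bullet$, well defined precisely because every $s \in S$ satisfies $\varphi(s) \notin \mathfrak{p}$ and hence becomes a unit in $\kappa(x)$. By the characterization of $\gamma$ in \autoref{factorization from schemes}, the prime congruence $\mathfrak{c}_{\gamma(x),U}$ attached to $\gamma(x)$ on $A$ is $\varphi^*(\mathfrak{p}) = \congker\big(A \to (A_R/\mathfrak{p})^\bullet\big)$; as $A_R/\mathfrak{p}$ injects into $\kappa(x) = \textup{Frac}(A_R/\mathfrak{p})$, this also equals the congruence kernel of $A \to \kappa(x)^\bullet$. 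A direct computation of fractions in the field $\kappa(x)$ then shows $\frac{a}{s} \sim_{\congker(\psi)} \frac{b}{t}$ iff $(at,bs)\in \mathfrak{c}_{\gamma(x),U}$, which is exactly the description of the localized congruence $S^{-1}\mathfrak{c}_{\gamma(x),U}$ via \autoref{congruence-localization}. Since $\mathfrak{c}_{\gamma(x)} = i_{U,*}(\mathfrak{c}_{\gamma(x),U}) = S^{-1}\mathfrak{c}_{\gamma(x),U}$ by \cite[Lemma 2.17]{Lorscheid-Ray23}, I conclude $\congker(\psi) = \mathfrak{c}_{\gamma(x)}$.

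Because $p^\#_{R,x}$ and the projection onto the residue field are canonical, the resulting $\gamma_x$ is independent of all choices, so no separate gluing argument is required; alternatively one can phrase the construction chart by chart as the unique extension of the injection $A/\mathfrak{c}_{\gamma(x),U} \hookrightarrow \kappa(x)^\bullet$ to $\textup{Frac}(A/\mathfrak{c}_{\gamma(x),U}) \simeq \kappa(\gamma(x))$, which exists and remains injective because the source is integral and the target is a pointed group, and then verify compatibility on a common principal open using \autoref{principal intersecion}. Finally, $\gamma_x$ is automatically a morphism of pointed groups, since any morphism of pointed monoids between pointed groups sends units to units. The step I expect to be the main obstacle is the identification $\congker(\psi) = \mathfrak{c}_{\gamma(x)}$: the delicate point is matching the localization of congruences with the passage to the fraction field, which is where \autoref{congruence-localization} together with the concrete fraction computation inside $\kappa(x)$ does the essential work.
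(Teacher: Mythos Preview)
Your argument is correct. The paper's proof is much terser and corresponds to what you describe as the ``alternative'' at the end of your proposal: it simply chooses an affine chart $U=\MSpec A$ containing $y$, identifies $\gamma(x)$ with the congruence kernel $\mathfrak{c}$ of $A\to (A_R/\mathfrak{p}_x)^\bullet$, and then observes that the resulting injection $A/\mathfrak{c}\hookrightarrow (A_R/\mathfrak{p}_x)^\bullet$ extends to an injection $\kappa(\gamma(x))=\textup{Frac}(A/\mathfrak{c})\hookrightarrow \textup{Frac}(A_R/\mathfrak{p}_x)^\bullet=\kappa(x)^\bullet$. It does not pass through the stalk map $p_{R,x}^\#$ or explicitly invoke \autoref{congruence-localization} to identify $\congker(\psi)$ with $\mathfrak{c}_{\gamma(x)}$; that identification is implicit in the isomorphism $\kappa(\tilde{x})\simeq\textup{Frac}(\Gamma U/\mathfrak{c}_{\tilde{x},U})$ recorded just before the proposition. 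Your primary route via stalks is more intrinsic and makes the independence of choices transparent, at the cost of a longer computation; the paper's route is shorter but leans on the prior chart-wise description of $\kappa(\gamma(x))$. Substantively the two arguments coincide.
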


\begin{proof}
Let $x \in X_R$ and $U = \MSpec A \subseteq X$ an affine open subscheme such that $x \in U_R$. Let $\mathfrak{p}_x$ be the prime ideal of $A_\Z$ corresponding to $x$. As $\gamma(x)$ is defined as the point corresponding to the congruence kernel $\mathfrak{c}$ of the map 
\[A \rightarrow A_\Z^\bullet \twoheadrightarrow (A_\Z / \mathfrak{p}_x)^\bullet,
\]
one has an injective morphism of pointed groups 
\[
\kappa(\gamma(x)) = \textup{Frac}(A / \mathfrak{c}) \rightarrow \big(\textup{Frac}(A_\Z / \mathfrak{p}_x)\big)^\bullet = \kappa(x)^\bullet.
\]
\end{proof}

\subsection{Characterization of \texorpdfstring{$\SCong_k X$}{SCongk X}} We give a characterization of $\SCong_k X$ in terms of a continuous map coming from the base extension of $X$ to a field that contains $k$ as a multiplicative submonoid.

\begin{lemma}
\label{bound on roots of 1 for the group of fractions}
Let $A$ be an integral pointed monoid such that $\#\{x \in A \mid x^n = \alpha\} \leq n$ for all $\alpha \in A$ and $n \geq 1$. Then $\#\{y \in \textup{Frac}(A) \mid y^n = 1\} \leq n$ for all $n \geq 1$.
\end{lemma}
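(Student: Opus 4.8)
The plan is to reduce the statement to the hypothesis by clearing denominators. Since the set $\mu := \{y \in \textup{Frac}(A) \mid y^n = 1\}$ is a priori only a subset (in fact a subgroup) of the pointed group $\textup{Frac}(A)$, it suffices to prove that any finite collection $y_1, \dotsc, y_m$ of pairwise distinct elements of $\mu$ satisfies $m \leq n$; establishing this for all finite subcollections yields $\#\mu \leq n$, so I need not assume $\mu$ finite in advance.

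First I would observe that each $y_i$ is nonzero, because $y_i^n = 1 \neq 0$, and hence invertible in the pointed group $\textup{Frac}(A)$. Thus each $y_i$ can be written as $y_i = a_i / b_i$ with $a_i, b_i \in A \setminus \{0\}$. Since $A$ is integral it is in particular without zero divisors, so the product $b := b_1 \cdots b_m$ again lies in $A \setminus \{0\}$.

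The key step is to multiply through by this common denominator. Set $c_i := b\, y_i$. Because $b\, y_i = a_i \prod_{j \neq i} b_j$, each $c_i$ in fact lies in $A$ (not merely in $\textup{Frac}(A)$), and one computes $c_i^n = b^n y_i^n = b^n$. Writing $\alpha := b^n \in A$, this exhibits every $c_i$ as a solution in $A$ of the equation $x^n = \alpha$. Moreover, multiplication by the nonzero element $b$ is injective on the group $\textup{Frac}(A)$, so $c_1, \dotsc, c_m$ are pairwise distinct in $\textup{Frac}(A)$; and since the natural map $A \hookrightarrow \textup{Frac}(A)$ is injective, they are pairwise distinct already as elements of $A$. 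Hence $\#\{x \in A \mid x^n = \alpha\} \geq m$, and applying the hypothesis to $\alpha = b^n$ gives $m \leq n$, as desired.

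I expect no serious obstacle here: the substance of the argument is the single observation that scaling an $n$-th root of unity by a common denominator produces an $n$-th root of $b^n$ inside $A$, to which the hypothesis directly applies. The only points requiring a little care are that $b \neq 0$ (using that integrality implies the absence of zero divisors), that each $c_i$ genuinely lands in $A$ after clearing denominators, and that distinctness survives both multiplication by $b$ and the inclusion $A \hookrightarrow \textup{Frac}(A)$. All three are immediate from the basic facts recalled in \autoref{preliminaries}.
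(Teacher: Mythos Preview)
Your proof is correct and follows essentially the same approach as the paper: both arguments clear denominators by multiplying each $y_i=a_i/b_i$ by the product $b=b_1\cdots b_m$ to obtain elements $c_i=a_i\prod_{j\neq i}b_j\in A$ satisfying $c_i^n=b^n$, and then invoke the hypothesis on $\alpha=b^n$. The only cosmetic difference is that the paper starts from $n+1$ (not necessarily distinct) fractions and deduces a coincidence $c_\ell=c_m$ which it then cancels back to $a_\ell/b_\ell=a_m/b_m$, whereas you phrase the same step contrapositively via the injectivity of multiplication by $b$ on $\textup{Frac}(A)$.
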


\begin{proof}
Let $n \geq 1$ and $a_1, \dotsc, a_{n+1}, b_1, \dotsc, b_{n+1} \in A \backslash \{0\}$ such that $\big(\frac{a_i}{b_i}\big)^n = 1$ for all $i = 1, \dotsc, n+1$. Define $\alpha:= (b_1 \cdots b_{n+1})^n$. As $a_i^n = b_i^n$, one has
\[
\alpha = \Bigg(\underset{i \neq j}{\prod}b_i \cdot a_j \Bigg)^n
\]
for all $1 \leq j \leq n+1$. Thus there exists $\ell \neq m$ such that 
\[
\underset{i \neq \ell}{\prod}b_i \cdot a_\ell = \underset{i \neq m}{\prod}b_i \cdot a_m.
\]
As $A$ is integral, one has $b_m a_\ell = b_\ell a_m$, which implies $\frac{a_\ell}{b_\ell} = \frac{a_m}{b_m}$.
\end{proof}

\begin{lemma}
\label{prop-surjection}
Let $K$ be a field, $A$ a pointed submonoid of $K^\bullet$ and $S$ a (not necessarily pointed) monoid. Let $\mathfrak{c}$ be a prime congruence on $A[S]$\footnote{See \autoref{basic definitions} for the definition of $A[S]$.} such that:
\begin{enumerate}
    \item The natural map $A\rightarrow A[S]/\mathfrak{c}$ is injective;
    \item $\#\{x\in A[S]/\mathfrak{c} \; |\; \, x^n = \alpha \}\leq n$ for every $\alpha$ in $A[S]/\mathfrak{c}$ and $n \geq 1$.
\end{enumerate}
Then there exists $\mathfrak{p} \in \Spec K[S]$ such that $\mathfrak{c}=i^*(\mathfrak{p})$, where $i: A[S] \rightarrow (K[S])^\bullet$ is the natural inclusion.
\end{lemma}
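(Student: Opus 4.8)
The plan is to reduce the statement to a group-theoretic extension problem and solve it by transfinite induction. Write $q\colon A[S]\twoheadrightarrow B:=A[S]/\mathfrak c$ for the quotient. First I would observe that a prime $\mathfrak p$ with $\mathfrak c=i^*(\mathfrak p)$ is the same datum as a field $F$ together with a ring homomorphism $\Phi\colon K[S]\to F$ whose composite with $i$ has congruence kernel exactly $\mathfrak c$: one sets $\mathfrak p=\ker\Phi$, which is prime because $\Phi(K[S])$ is a subring of a field, and the identity $i^*(\ker\Phi)=\congker(\Phi^\bullet\circ i)$ (the injection $K[S]/\ker\Phi\hookrightarrow F$ does not alter congruence kernels) then gives the claim. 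Since a $K$-algebra homomorphism $\Phi$ is determined by the monoid homomorphism $\theta\colon S\to(F,\cdot)$, $s\mapsto\Phi(\chi^s)$, and then $\Phi^\bullet\circ i$ is the monoid morphism $\lambda\chi^s\mapsto\lambda\,\theta(s)$, it suffices to produce a field $F\supseteq K$ and a monoid morphism $\phi\colon A[S]\to F^\bullet$ extending the given inclusion $A\hookrightarrow K^\bullet\subseteq F^\bullet$ and satisfying $\congker(\phi)=\mathfrak c$. As $\congker(\phi)=\mathfrak c$ is equivalent to $\phi$ factoring as $A[S]\xrightarrow{q}B\hookrightarrow F^\bullet$ with injective second map, the task becomes: embed the domain $B$ into some $F^\bullet$ with $F\supseteq K$ so that the embedding restricts on $A$ to the prescribed map $A\hookrightarrow K^\bullet$.

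Next I would set up the group theory. Since $\mathfrak c$ is prime, $B$ is integral, and condition (2) together with \autoref{intrinsic domains} shows $B$ is a domain; hence $G:=\textup{Frac}(B)$ is a pointed group, and \autoref{bound on roots of 1 for the group of fractions} yields $\#\{y\in G\mid y^n=1\}\le n$ for all $n\ge1$. Thus the torsion subgroup of the abelian group $\Gamma:=G\setminus\{0\}$ is locally cyclic, i.e.\ isomorphic to a subgroup of $\Q/\Z$. Condition (1) makes $A\hookrightarrow B$, so, taking fractions, $H:=\textup{Frac}(A)\setminus\{0\}$ is a subgroup of $\Gamma$; on the other hand the given inclusion $A\hookrightarrow K^\bullet$ extends to an embedding $H\hookrightarrow K^\times$. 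Sending $0\mapsto0$ and using $B\setminus\{0\}\hookrightarrow\Gamma$, it now suffices to find a field $F\supseteq K$ and an injective group homomorphism $\Theta\colon\Gamma\hookrightarrow F^\times$ whose restriction to $H$ is the prescribed map $H\hookrightarrow K^\times\subseteq F^\times$.

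I would build $\Theta$ by Zorn's lemma inside a sufficiently large algebraically closed extension $\Omega$ of $K$, considering maximal pairs $(\Gamma',\Theta')$ with $H\subseteq\Gamma'\subseteq\Gamma$ and $\Theta'\colon\Gamma'\hookrightarrow\Omega^\times$ an injective extension of $H\hookrightarrow K^\times$. If $\Gamma'\ne\Gamma$, choose $g\in\Gamma\setminus\Gamma'$. When $g$ has infinite order modulo $\Gamma'$, I send $g$ to an element transcendental over the subfield generated by $\Theta'(\Gamma')$, which preserves injectivity; this is why $\Omega$ must have large transcendence degree over $K$. When $g^n\in\Gamma'$ for a minimal $n$, I must choose an $n$-th root $y\in\Omega$ of $\Theta'(g^n)$ with $y,\dots,y^{n-1}\notin\Theta'(\Gamma')$, which makes the extension to $\langle\Gamma',g\rangle$ injective. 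The main obstacle is precisely this last selection. Here the bound $\#\{y\in\Gamma\mid y^n=1\}\le n$ is essential: it forces the relevant torsion to be cyclic and hence realizable by roots of unity in $\Omega$ (which $\Omega$ supplies, provided it contains the required roots of unity—automatic in characteristic zero), and the selection reduces to a counting argument over the finitely many $n$-th roots of $\Theta'(g^n)$ against the finitely many constraints coming from $\Theta'(\Gamma')$. Granting this step, maximality forces $\Gamma'=\Gamma$, and unwinding the reductions of the first two paragraphs produces the desired prime $\mathfrak p$.
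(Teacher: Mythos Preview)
Your strategy coincides with the paper's: pass to the Grothendieck group of $(A[S]/\mathfrak c)\setminus\{0\}$, set up a Zorn's-lemma argument inside a large algebraically closed extension of $K$, and extend the embedding one element at a time according to whether the new element has infinite or finite order modulo the current subgroup.

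Two caveats. First, your appeal to \autoref{intrinsic domains} is circular: that corollary is deduced from \autoref{equivalence for domain}, which in turn invokes the very lemma you are proving. Fortunately the appeal is inessential---you only need that $B$ is integral (immediate from $\mathfrak c$ prime) to form $\textup{Frac}(B)$, and then \autoref{bound on roots of 1 for the group of fractions} gives the root bound in $G$ directly. Second, the finite-order step is not literally a ``counting argument over finitely many constraints'': $\Theta'(\Gamma')$ is typically infinite, so pigeonhole among the $n$ roots does not work. The paper instead picks a root $\zeta$ of the correct multiplicative order (splitting further into the subcases $h$ torsion and $h$ non-torsion) and shows that any failure of injectivity forces, via the bound $\#\{y\in G\mid y^{sw}=1\}\le sw$, an element of $H_\Phi$ to lie in $\langle h\rangle$, whence a divisibility contradiction. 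You have identified the right ingredient but undersold the argument; your parenthetical about needing enough roots of unity in $\Omega$ is exactly where this bites.
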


\begin{proof}
As $\mathfrak{c}$ is prime, $M := (A[S]/\mathfrak{c}) \backslash \{0\}$ is a monoid. Let $G$ be the Grothendieck group of $M$. Note that $A\backslash \{0\} \rightarrow G$ is injective. Let $I$ be the ideal  of $K[G]$ generated by 
\[
\big\{1_K \lambda_G - \lambda_K 1_G \in K[G] \; \big| \; \lambda \in A \big\}, 
\]
where, for $a$ in $A$, the subscript in $a_K$ (resp. $a_G$) indicates $a$ as an element of $K$ (resp. $G$). Define the following maps: 

\begin{enumerate}
    \item The natural inclusion $i: A[S]\hookrightarrow K[S]^\bullet$;
    \item $f: A[S] \rightarrow K[G]^\bullet$, where $f(x) = 1 \overline{x}$ for $x\neq 0$, and $f(0) = 0$;
    \item The morphism of $K$-algebras $g: K[S] \rightarrow K[G] / I$ induced by $s\mapsto 1\overline{s} + I$ for $s\in S$, where $\overline{s}$ is the class of $s$ in $G$;
    \item The natural projection $\pi: K[G] \twoheadrightarrow K[G]/I$.
\end{enumerate}

We have a commutative diagram of pointed monoids
\[
\begin{tikzcd}
A[S] \arrow{r}{i} \arrow[swap]{d}{f} & (K[S])^\bullet \arrow{d}{g^\bullet} \\%
(K[G])^\bullet \arrow{r}{\pi^\bullet}& (K[G]/I)^\bullet.
\end{tikzcd}
\]

We show that there exists a field $E$ and a morphism of rings $\psi: K[G] \rightarrow E$ satisfying $I\subseteq \ker(\psi)$ such that the induced map $z\mapsto \psi(1z)$ from $G$ to $E$ is injective. Thus $\mathfrak{p} := g^{-1}\big(\pi(\ker(\psi)\big)$ is a prime ideal of $K[S]$ satisfying $i^*(\mathfrak{p}) = \mathfrak{c}$.

Let $\Gamma$ be the set $2^{K[G]}$ and $E$ the algebraic closure of the field $K(T_\gamma |\, \gamma \in \Gamma)$. Let $\mathcal{H}$ be the set of $K$-algebra morphisms $\varphi: K[H_\varphi] \rightarrow E$ satisfying:

\begin{enumerate}[label=(\roman*)]
    \item $H_\varphi$ is a subgroup of $G$ containing $A\backslash \{0\}$;
    \item The induced map $z\mapsto \varphi(1z)$ from $H_\varphi$ to $E$ is injective;
    \item $\{1_K \lambda_G - \lambda_K 1_G \in K[H_\varphi] \mid \lambda \in A\} \subseteq \ker(\varphi)$.
\end{enumerate}

Let $\leq$ be the partial order on $\mathcal{H}$ given by 
$\varphi \leq \psi$ if $H_\varphi \subseteq H_\psi$ and $\psi\vert_{K[H_\varphi]} = \varphi$. Note that $\mathcal{H}$ is non-empty, because there exists a morphism of $K$-algebras $\tau: K[\langle A\backslash \{0\} \rangle] \rightarrow  E$ such that $\tau(1_Ka_G) = a_K1_G$, where $\langle A\backslash \{0\} \rangle$ is the subgroup of $G$ generated by $A\backslash \{0\}$. Let $\mathcal{C}$ be a chain in $\mathcal{H}$. Define $H_\beta := \underset{\varphi \in \mathcal{C}}{\bigcup} H_\varphi$ and note that $H_\beta$ is a group. For $\varphi, \psi \in \mathcal{C}$, if $y \in K[H_\varphi] \cap K[H_\psi]$ one has $\varphi(y) = \psi(y)$. Thus the map 
\[
\begin{array}{cccl}
H_\beta & \longrightarrow & E & \\
x & \longmapsto & \varphi(x) &\text{if } x\in K[H_\varphi],
\end{array}
\]
induces an element $\beta \in \mathcal{H}$. Note that $\beta$ is an upper bound for $\mathcal{C}$. By Zorn's lemma, $\mathcal{H}$ has a maximal element $\Phi$. Next we show that $H_\Phi = G$. 

Suppose that there is an $h\in G\backslash H_\Phi$. We have three cases to analyze:

\medskip\noindent
\textbf{Case 1:} $\{\ell > 0 \mid h^\ell \in H_\Phi\} = \emptyset$, \textit{i.e.}, $h$ is not a torsion element and $\langle h \rangle \cap H_\Phi = \{1_G\}$. In this case, $H_\Omega := \langle H_\Phi, h \rangle$ is isomorphic to $H_\Phi \times \Z$. As $|\Gamma| > \big|K[G]\big|$, there exists $\alpha \in \Gamma$ such that $T_\alpha \notin \im \Phi$. Let $\Omega: K[H_\Omega] \rightarrow E$ be the morphism of $K$-algebras such that $\Omega\vert_{K[H_\Phi]} = \Phi$ and $\Omega(1h) = T_\alpha$. Therefore, $\Omega \in \mathcal{H}$ and $\Phi < \Omega$, which contradicts the maximality of $\Phi$.

\medskip\noindent
\textbf{Case 2:} $\{\ell > 0 \mid  h^\ell \in H_\Phi\} \neq \emptyset$ and $h$ is a torsion element. Then there exist
\[
w := \min \{\ell > 0 \mid h^\ell \in H_\Phi\} \quad \text{and} \quad s := \textup{ord}(h^w)
\]
such that $sw = \textup{ord}(h)$. Note that $\Phi(1h^w)$ is a root of $1$ in $E$ and let $\zeta \in E$ be a primitive $w^{\textup{th}}$-root of $\Phi(1h^w)$. Note that $\textup{ord}(h^w) = \textup{ord}(\Phi(1h^w))$. Let $H_\Omega$ be the subgroup $\langle H_\Phi, h \rangle$. There exists a morphism of groups $\omega: H_\Omega \rightarrow E^*$ such that $\omega(x)  = \Phi(1x)$ for $x\in H_\Phi$, and $\omega(h) = \zeta$. Next we show that $\omega$ is injective.
    
If $\omega$ is not injective, then there exists $y\in H_\Phi$ and $m\in \{1, \dotsc, w-1\}$ such that $\omega(yh^{-m}) = 1$, which implies $\Phi(y) = \omega(y) = \omega(h^m) = \zeta^m$. Then 
\[
\Phi(1 y^{sw}) = \zeta^{msw} = \omega((h^{sw})^m) = \omega(1) = \Phi(1),
\]
which implies $y^{sw} = 1$. As $\textup{ord}(h) = sw$, by \autoref{bound on roots of 1 for the group of fractions}, one has $y \in \langle h \rangle$. As the intersection $\langle h \rangle \cap H_\Phi$ is equal to $\langle h^w \rangle$, there exists $t>0$ such that $y = h^{wt}$. This implies $\zeta^{wt-m} = \omega(yh^{-m}) = 1$ and, consequently, that $sw=\textup{ord}(\zeta)$ divides $wt-m$, which is a contradiction because $w$ does not divide $m$. 

Let $\Omega: K[H_\Omega] \rightarrow E$ be the morphism of $K$-algebras induced by $\omega$. Note that $\Omega \in \mathcal{H}$ and $\Phi < \Omega$, which contradicts the maximality of $\Phi$.

\medskip\noindent
\textbf{Case 3:} $\{\ell > 0 \mid  h^\ell \in H_\Phi\} \neq \emptyset$ and $h$ is not a torsion element. Let 
\[
w := \min \{\ell > 0 \mid  h^\ell \in H_\Phi\}
\]
and fix $\xi\in E$ such that $\xi^w = \Phi(1h^w)$. Suppose that $\langle \xi \rangle \cap \Phi(1H_\Phi) \neq \langle \xi^w \rangle$. Define $m := \min\{\ell > 0 \mid \xi^\ell \in \Phi(1 H_\Phi)\}$. Then $w > m$ and exists $y\in H_\Phi$ such that $\Phi(1y) = \xi^m$. Write $w=qm+r$ with $q,r$ integers satisfying $0 \leq r <m$. Note that $h^wy^{-q} \in H_\Phi$ and $\Phi(1h^wy^{-q}) = \xi^r$. As $r<m$ and $\Phi\vert_{1H_\Phi}$ is injective, $r=0$ and $h^w = y^q$. Let $\tilde{h} := h^my^{-1}$. Note that $\tilde{h} \notin H_\Phi$ and $\tilde{h}^q = 1$. Analogously to \textbf{Case 2}, we get a contradiction. Thus $\langle \xi \rangle \cap \Phi(1H_\Phi) = \langle \xi^w \rangle$. 

Let $H_\Omega$ be the subgroup $\langle H_\Phi, h \rangle$. There exists an injective morphism of groups $\omega: H_\Omega \rightarrow E^*$ such that $\omega(x)  = \Phi(1x)$ for $x\in H_\Phi$, and $\omega(h) = \xi$. Let $\Omega: K[H_\Omega] \rightarrow E$ be the morphism of $K$-algebras induced by $\omega$. Note that $\Omega \in \mathcal{H}$ and $\Phi < \Omega$, which contradicts the maximality of $\Phi$.
\end{proof}

\begin{prop}
\label{equivalence for domain}
Let $A$ be an integral pointed monoid such that $\#\{x\in A|\, x^n = \alpha \}\leq n$ for every $\alpha$ in $A$ and $n \geq 1$. Then there exists a field $K$ of characteristic zero and an injective morphism of pointed monoids $A\hookrightarrow K^\bullet$. 
\end{prop}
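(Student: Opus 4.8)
The plan is to realize the given $A$ as one of the monoid algebras appearing in \autoref{prop-surjection} and then apply that lemma to the trivial congruence. Since $A$ is integral it has no zero divisors, so $S := A \setminus \{0\}$ is a (not necessarily pointed) monoid under the multiplication of $A$, with identity $1_A$. The assignment $\chi^m \mapsto m$ then identifies the $\F_1$-algebra $\F_1[S]$ with $A$ as pointed monoids. I regard $\F_1 = \{0,1\}$ as a pointed submonoid of $\Q^\bullet$, so that $\Q$ plays the role of an ambient field of characteristic zero and $\F_1 \hookrightarrow \Q^\bullet$.

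With this setup I would feed the data to \autoref{prop-surjection}, taking its field to be $\Q$, its pointed submonoid to be $\F_1 \subseteq \Q^\bullet$, its monoid to be $S$, and the congruence $\mathfrak{c}$ on $\F_1[S] \cong A$ to be the trivial one. This $\mathfrak{c}$ is prime because $\F_1[S]/\mathfrak{c} \cong A$ is integral by hypothesis. Hypothesis (1) of the lemma holds since $\F_1 \to A$ is injective, and hypothesis (2) is exactly the assumption that $\#\{x \in A \mid x^n = \alpha\} \le n$ for all $\alpha \in A$ and $n \ge 1$ (this is precisely where the bound on roots of unity from \autoref{bound on roots of 1 for the group of fractions} enters, inside the proof of \autoref{prop-surjection}). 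The lemma then produces a prime ideal $\mathfrak{p} \in \Spec \Q[S]$ with $i^*(\mathfrak{p}) = \mathfrak{c}$, where $i : A \cong \F_1[S] \hookrightarrow (\Q[S])^\bullet$ is the natural inclusion.

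Finally I would unwind what $i^*(\mathfrak{p}) = \mathfrak{c}$ means. By definition $i^*(\mathfrak{p})$ is the congruence kernel of the composite $A \xrightarrow{i} (\Q[S])^\bullet \twoheadrightarrow (\Q[S]/\mathfrak{p})^\bullet$, so its equality with the trivial congruence says exactly that this composite is injective. Since $\mathfrak{p}$ is prime, $\Q[S]/\mathfrak{p}$ is an integral domain; let $K := \textup{Frac}(\Q[S]/\mathfrak{p})$, which has characteristic zero because it is a $\Q$-algebra. Composing with the inclusion $(\Q[S]/\mathfrak{p})^\bullet \hookrightarrow K^\bullet$ yields the desired injective morphism of pointed monoids $A \hookrightarrow K^\bullet$.

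The substance of the argument is carried entirely by \autoref{prop-surjection}, whose Zorn's-lemma construction already builds an embedding of the relevant group into the multiplicative group of a characteristic-zero field. The only genuinely new step here is the observation that an arbitrary integral pointed monoid is an $\F_1$-monoid algebra $\F_1[A\setminus\{0\}]$, which is what allows the general lemma to apply with the trivial choice of ambient submonoid $\F_1$ and hence with no compatibility constraints. The main thing to be careful about is the bookkeeping when specializing the lemma (its $A$ becomes $\F_1$, its $K$ becomes $\Q$, its $S$ becomes $A\setminus\{0\}$) and verifying that the trivial congruence really does satisfy hypotheses (1) and (2).
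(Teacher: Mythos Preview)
Your proposal is correct and follows essentially the same route as the paper. The only cosmetic differences are that you realize $A$ as $\F_1[A\setminus\{0\}]$ with the trivial congruence, whereas the paper writes $A \simeq \F_1[A]/\langle 0_{\F_1}1_A \sim 1_{\F_1}0_A\rangle$, and you take $\Q$ in place of $\C$ as the ambient characteristic-zero field; neither alters the argument, which in both cases reduces immediately to \autoref{prop-surjection}.
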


\begin{proof}
Note that $A$ is isomorphic to $\F_1[A]/\mathfrak{c}$, where $\mathfrak{c}:= \langle 0_{\F_1} 1_A \sim 1_{\F_1} 0_A \rangle$. As $\F_1$ is a pointed submonoid of $\C^\bullet$, by \autoref{prop-surjection}, there exists $\mathfrak{p} \in \Spec \, \C[A]$ such that $\mathfrak{c} = i^*(\mathfrak{p})$, where $i: \F_1[A]\rightarrow \C[A]^\bullet$ is the inclusion. 

Let $K$ be the field $\textup{Frac}(\C[A]/\mathfrak{p})$. Then the composition
\[
A\simeq \F_1[A]/\mathfrak{c} \rightarrow (\C[A]/\mathfrak{p})^\bullet \rightarrow K^\bullet
\]
is an injective morphism $A\hookrightarrow K^\bullet$ of pointed monoids.
\end{proof}

The next result, which follows from \autoref{equivalence for domain}, gives an \textit{intrisic} (i.e., without mention to fields) description of domains.

\begin{cor}
\label{intrinsic domains}
Let $A$ be a pointed monoid. Then $A$ is a domain if, and only if, $A$ is integral and satisfies $\#\{x\in A|\, x^n = \alpha \}\leq n$ for every $\alpha$ in $A$ and $n \geq 1$.
\end{cor}

\begin{rem}
\label{equivalence for domain - char zero}
By \autoref{equivalence for domain}, every domain is isomorphic to a pointed submonoid of $K^\bullet$ for some field $K$ \textit{of characteristic zero}.
\end{rem}

\begin{prop}
\label{surjectiveness from Spec}
Let $B$ be a pointed monoid, $F$ a field and $k$ a domain that is a pointed submonoid of both $B$ and $F^\bullet$. Let $R:=  F[B] / I$, where $I$ is the ideal generated by $\{ \lambda_k  1_B - 1_k \lambda_B \mid \lambda\in k \}$. Then the map $\Spec R\rightarrow \SCong B$, induced by $b \mapsto \overline{1b}$, which sends $\mathfrak{p} \in \Spec R$ to the congruence kernel of the composition $B \rightarrow R^\bullet \rightarrow (R/\mathfrak{p})^\bullet$, has image $\SCong_k B$. 
\end{prop}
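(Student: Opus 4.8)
The plan is to establish the two inclusions separately: that the image is contained in $\SCong_k B$, and that every $k$-congruence lies in the image. The first is routine and I would dispose of it directly. Fix $\mathfrak p \in \Spec R$ and let $\mathfrak c$ be the congruence kernel of $B \to R^\bullet \to (R/\mathfrak p)^\bullet$. As $\mathfrak p$ is prime, $R/\mathfrak p$ is an integral domain, so $B/\mathfrak c$ embeds into $(R/\mathfrak p)^\bullet$ and hence into $\textup{Frac}(R/\mathfrak p)^\bullet$; therefore $B/\mathfrak c$ is a domain and $\mathfrak c \in \SCong B$. Moreover the generators of $I$ force the composite $k \to B \to R^\bullet$ to coincide with $k \hookrightarrow F \to R^\bullet$ (concretely, $\lambda_B \mapsto \lambda_k 1_R$), and a ring homomorphism from the field $F$ into the nonzero ring $R/\mathfrak p$ is injective; hence $k \to (R/\mathfrak p)^\bullet$ is injective, which forces $k \to B/\mathfrak c$ to be injective. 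Thus $\mathfrak c \in \SCong_k B$.

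For the reverse inclusion I would first reformulate the goal. Producing a prime of $R$ whose associated congruence equals a prescribed $\mathfrak c \in \SCong_k B$ is equivalent to producing an integral domain $E$, a ring homomorphism $\iota \colon F \to E$ and a monoid homomorphism $\phi \colon B \to E^\bullet$ satisfying $\phi|_k = \iota|_k$ and $\congker(\phi) = \mathfrak c$. Indeed, such a pair determines a ring homomorphism $F[B] \to E$ annihilating $I$ (the condition $\phi|_k = \iota|_k$ kills the generators $\lambda_k 1_B - 1_k \lambda_B$, and $\phi(0_B) = 0$ kills $\chi^{0_B}$), hence a homomorphism $\psi \colon R \to E$ with $\mathfrak p := \ker \psi$ prime; unwinding the definitions, $B \to R^\bullet \to (R/\mathfrak p)^\bullet$ becomes, after the embedding $R/\mathfrak p \hookrightarrow E$, exactly $\phi$, so its congruence kernel is $\mathfrak c$. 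Everything therefore reduces to embedding the domain $D := B/\mathfrak c$ into a field over $F$ in a way compatible with $k$.

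To construct this embedding I would appeal to \autoref{prop-surjection} with $A = k$, $K = F$ and $S := D \setminus \{0\}$; the latter is a monoid because $D$ is a domain, hence integral, and the assignment $\lambda \chi^s \mapsto \lambda s$ gives a surjection $k[S] \twoheadrightarrow D$ identifying $D$ with $k[S]/\mathfrak c'$, where $\mathfrak c' := \congker(k[S] \to D)$. Hypothesis (1) of \autoref{prop-surjection} holds since $k \to D$ is injective ($\mathfrak c$ being a $k$-congruence), and hypothesis (2) holds since the domain $D$ satisfies the root bound by \autoref{intrinsic domains}. The proposition then yields $\mathfrak p' \in \Spec F[S]$ with $i^*(\mathfrak p') = \mathfrak c'$; setting $E := \textup{Frac}(F[S]/\mathfrak p')$, this equality says precisely that $k[S] \to (F[S]/\mathfrak p')^\bullet$ factors through an injection $j \colon D \hookrightarrow E^\bullet$ with $j|_k = \iota|_k$, where $\iota \colon F \to E$ is the structure map. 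Taking $\phi := j \circ (B \twoheadrightarrow D)$ supplies the pair $(\iota, \phi)$ demanded by the reformulation, which finishes the argument. I expect the main obstacle to be exactly this step: it rests entirely on the transfinite construction of \autoref{prop-surjection} (embedding the fraction group of $D$ into a field over $F$), and the remaining care lies in presenting $D$ as $k[S]/\mathfrak c'$ so that the proposition applies, and in transporting the resulting prime back across the presentation $R = F[B]/I$.
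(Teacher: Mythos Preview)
Your proof is correct and follows essentially the same strategy as the paper: both directions are handled the same way, and the hard inclusion is reduced to a single application of \autoref{prop-surjection}. The only cosmetic difference is that the paper applies \autoref{prop-surjection} with $S=B$ (viewed as a plain monoid) and the congruence $\mathfrak a=\langle\{\lambda_k1_B\sim1_k\lambda_B\}\cup\{1a\sim1b\mid(a,b)\in\mathfrak d\}\rangle$ on $k[B]$, obtaining a prime of $F[B]$ directly and then checking that it contains $I$; you instead pass first to $D=B/\mathfrak c$, apply \autoref{prop-surjection} with $S=D\setminus\{0\}$, and transport the resulting prime back via your reformulation in terms of a pair $(\iota,\phi)$. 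Both routes rest on the same lemma and the same verification that $D$ is a domain (hence satisfies the root bound), so neither is materially simpler than the other.
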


\begin{proof}
Let $\varphi: B\rightarrow R^\bullet$ given by $\varphi(b):= \overline{1b}$ and $\mathfrak{p} \in \Spec R$. As $F$ is a field, the composition $F \rightarrow R \rightarrow R /\mathfrak{p}$ is injective. As 
\[
\begin{tikzcd}
k \arrow[r, hook] \arrow[d, hook] & B \arrow{d}{\varphi} \\%
F^\bullet \arrow[r, hook]& R^\bullet.
\end{tikzcd}
\]
commutes, one has $\varphi^*(\mathfrak{p}) \in \SCong_k B$. Therefore $\im \varphi^* \subseteq \SCong_k B$. Next we prove that $\SCong_k B \subseteq \im \varphi^*$.

Let $\mathfrak{d}$ be a $k$-congruence on $B$. Define the congruence
\[
\mathfrak{a} := \langle \{\lambda_k  1_B \sim 1_k \lambda_B \mid \lambda\in k\}  \cup  \{1 a \sim 1 b \mid (a,b)\in \mathfrak{d}\rangle
\]
 on $k[B]$. The map $\overline{b} \mapsto \overline{1b}$ defines an isomorphism $B/ \mathfrak{d} \xrightarrow{\sim} k[B]/\mathfrak{a}$. As $B/ \mathfrak{d}$ is a domain, one has $\mathfrak{a} \in \SCong k[B]$. By \autoref{prop-surjection}, there exists $\mathfrak{q} \in \Spec F[B]$ such that $\mathfrak{a} = i^*(\mathfrak{q})$, where $i: k[B] \rightarrow (F[B])^\bullet$ is the inclusion.

Let $\pi: F[B] \rightarrow R$ be the natural projection. As $\lambda_k 1_B \sim_\mathfrak{a} 1_k \lambda_B$, one has
\[
\lambda_k 1_B - 1_k \lambda_B \in \mathfrak{q} \quad \text{for all} \quad \lambda \in k.
\]
This implies $I\subseteq \mathfrak{q}$ and, consequently, $\pi(\mathfrak{q}) \in \Spec R$. As the diagram
\[
\begin{tikzcd}
B \arrow{r}{\varphi} \arrow[swap]{d}{ } & R^\bullet \arrow{d}{ } \\
B/\mathfrak{d} \arrow{r}{ } \arrow["\rotatebox{90}{\(\sim\)}"]{d}{ } & \big(R / \pi(\mathfrak{q})\big)^\bullet \\
k[B]/\mathfrak{a} \arrow[hook]{r}{ } & (F[B]/\mathfrak{q})^\bullet \arrow[swap, "\rotatebox{90}{\(\sim\)}"]{u}{ }
\end{tikzcd}
\]
commutes, $B/\mathfrak{d} \rightarrow \big(R / \pi(\mathfrak{q})\big)^\bullet$ is injective and $\varphi^*\big(\pi(\mathfrak{q})\big) = \mathfrak{d}$.
\end{proof}

\begin{lemma}
\label{tensor product}
Let $A$ be a pointed monoid, $B$ an $A$-algebra and $R$ a ring such that $R^\bullet$ is an $A$-algebra. Let $I$ be the ideal of $R[B]$ generated by $\{\lambda_R 1_B - 1_R \lambda_B \mid \lambda\in A \}$. Then the rings $R[B] / I$ and $B_\Z \otimes_{A_\Z} R$ are isomorphic.
\end{lemma}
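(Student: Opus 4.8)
The plan is to identify both rings as representing the same functor on commutative rings and conclude by the Yoneda lemma; the one genuinely non-formal point is the bookkeeping of the absorbing element $0_B$. First I would record the universal property of the right-hand side. Since $(-)_\Z$ is left adjoint to the forgetful functor $(-)^\bullet$ (\autoref{base extension to rings}), for any ring $T$ a ring homomorphism $B_\Z \to T$ is the same datum as a morphism of pointed monoids $B \to T^\bullet$; likewise the $A_\Z$-algebra structures on $B_\Z$ and on $R$ come, via adjunction, from the pointed monoid maps $A\to B$ and $A\to R^\bullet$. As $B_\Z\otimes_{A_\Z} R$ is the pushout of $B_\Z \leftarrow A_\Z \rightarrow R$ in rings, a ring homomorphism $B_\Z\otimes_{A_\Z} R \to T$ is therefore the same as a pair $(\phi,\beta)$ of a morphism of pointed monoids $\phi\colon B\to T^\bullet$ and a ring homomorphism $\beta\colon R\to T$ with $\phi(\lambda_B)=\beta(\lambda_R)$ for every $\lambda\in A$.

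Next I would compute the universal property of $R[B]/I$. Because $R[B]$ is the monoid algebra of the multiplicative monoid $(B,\cdot)$ over $R$, a ring homomorphism $R[B]\to T$ is the same as a pair $(\beta,\phi)$ of a ring homomorphism $\beta\colon R\to T$ and a monoid homomorphism $\phi\colon (B,\cdot)\to(T,\cdot)$, and it kills $I$ exactly when $\beta(\lambda_R)=\phi(\lambda_B)$ for all $\lambda\in A$. The key step is to observe that such $\phi$ are automatically \emph{pointed}: since $0_A\in A$ maps to $0_B$ in $B$ and to $0_R=0$ in $R$, the generator of $I$ attached to $\lambda=0_A$ is $-\,0_B$, so $I$ contains the basis element $0_B$ and any admissible $\phi$ satisfies $\phi(0_B)=\beta(0_R)=0_T$. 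Thus ring maps out of $R[B]/I$ are exactly the pairs $(\phi,\beta)$ with $\phi\colon B\to T^\bullet$ a morphism of pointed monoids, $\beta\colon R\to T$ a ring map, and $\phi(\lambda_B)=\beta(\lambda_R)$ for all $\lambda\in A$ --- the same data as above. By Yoneda, the canonical map $R[B]\to B_\Z\otimes_{A_\Z} R$, $b\mapsto b\otimes 1$, descends to an isomorphism $R[B]/I \xrightarrow{\sim} B_\Z\otimes_{A_\Z} R$.

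The same conclusion can be reached by a direct tensor computation: writing $\epsilon\colon R[A]\to R$ for the $R$-linear augmentation induced by $A\to R^\bullet$, its kernel is generated by the elements $\lambda_A-\lambda_R 1_A$, whose images generate $I$, so $R[B]\otimes_{R[A]}R\cong R[B]/I$; base change along $\Z\to R$ gives $R[B]\otimes_{R[A]}R\cong \Z[B]\otimes_{\Z[A]}R$; and since $0_B\otimes 1=1_B\otimes\epsilon(0_A)=0$ and $\langle 0_A\rangle$ acts as zero on both factors, one may replace $\Z[B],\Z[A]$ by $B_\Z,A_\Z$ to obtain $B_\Z\otimes_{A_\Z}R$. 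I expect the treatment of $0_B$ and $0_A$ to be the only delicate point; everything else is formal manipulation of adjunctions and tensor products.
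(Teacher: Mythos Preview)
Your argument is correct. Both the Yoneda argument and the tensor computation go through; in particular, your handling of the absorbing element is right: the generator of $I$ attached to $\lambda=0_A$ is $0_R\cdot 1_B - 1_R\cdot 0_B = -\,0_B$, so $0_B\in I$ and any monoid map $\phi\colon B\to T$ compatible with $I$ is automatically pointed. Likewise, in the tensor computation the identity $0_B\otimes 1 = (0_A\cdot 1_B)\otimes 1 = 1_B\otimes 0 = 0$ in $\Z[B]\otimes_{\Z[A]}R$ lets you pass from $\Z[B],\Z[A]$ to $B_\Z,A_\Z$.

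The paper takes a more hands-on route: it simply writes down the two maps
\[
\overline{f}\colon R[B]/I \longrightarrow B_\Z\otimes_{A_\Z}R,\quad \overline{1b}\longmapsto (1b)\otimes 1,
\qquad
g\colon B_\Z\otimes_{A_\Z}R \longrightarrow R[B]/I,\quad \Big(\sum n_ib_i\Big)\otimes r \longmapsto \overline{\sum (n_ir)b_i},
\]
checks that $g$ is well defined by $A_\Z$-bilinearity, and observes that $\overline{f}$ and $g$ are mutually inverse. Your approach packages the same content as a representability statement: both rings corepresent the functor $T\mapsto\{(\phi,\beta)\mid \phi\colon B\to T^\bullet \text{ pointed},\ \beta\colon R\to T,\ \phi(\lambda_B)=\beta(\lambda_R)\}$. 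The gain is that well-definedness and inverse-ness come for free from Yoneda; the cost is that one must unwind the adjunction $(-)_\Z\dashv(-)^\bullet$ and the pushout description carefully, which you did. Either argument yields the same explicit isomorphism $\overline{f}$.
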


\begin{proof}
The map $b \mapsto (1b) \otimes 1$ from $B$ to $B_\Z \otimes_{A_\Z} R$ induces a morphism of $R$-algebras $f: R[B] \rightarrow B_\Z \otimes_{A_\Z} R$. As $I \subseteq \ker f$, one has a morphism of $R$-algebras  \[
  \begin{array}{cccc}
  \overline{f}: & R[B] / I & \longrightarrow & B_\Z \otimes_{A_\Z} R \\
           & \overline{1b} & \longmapsto & (1b) \otimes 1.
  \end{array}
 \]

As the map $(\sum n_i b_i , r) \mapsto \overline{\sum (n_i r) b_i}$ from $B_\Z \times R$ to $R[B] / I$ is $A_\Z$-bilinear, it induces a morphism of $A_\Z$-algebras
 \[
  \begin{array}{cccc}
  g: & B_\Z \otimes_{A_\Z} R & \longrightarrow & R[B] / I \\
           & (\sum n_i b_i)  \otimes r & \longmapsto & \overline{\sum (n_i r) b_i}.
  \end{array}
 \]

 As $\overline{f}$ and $g$ are mutually inverse, the result follows.
\end{proof}

\begin{thm}
\label{image = SCong_k}
Let $k$ be a domain, $X$ a $k$-scheme and $F$ a field such that $k$ is a pointed submonoid of $F^\bullet$. Then $X_\Z \times_{k_\Z} \Spec F$ is a closed subscheme  of $X_F$ whose image by $\gamma$ is $\SCong_k X$.
\end{thm}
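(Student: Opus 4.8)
The plan is to reduce everything to the affine situation, where the statement is essentially \autoref{surjectiveness from Spec}, and then glue. First I would fix an affine open covering $\{U_i = \MSpec B_i\}_{i \in I}$ of $X$. Then $\{U_{i,F}\}_{i \in I}$ is an affine covering of $X_F$, and the candidate closed subscheme $Z := X_\Z \times_{k_\Z} \Spec F$ restricts on the $i$-th piece to $Z_i := U_{i,\Z} \times_{k_\Z} \Spec F = \Spec\big(B_{i,\Z} \otimes_{k_\Z} F\big)$, these gluing along the overlaps because base extension commutes with intersections of affine opens.

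To see that $Z \hookrightarrow X_F$ is a closed immersion it is enough to check this locally, since the affine pieces glue: the natural ring homomorphism $B_{i,\Z} \otimes_\Z F \twoheadrightarrow B_{i,\Z} \otimes_{k_\Z} F$ is surjective, being the quotient that identifies the two $k_\Z$-module structures, so each $Z_i \hookrightarrow U_{i,F}$ is a closed immersion. (Equivalently, $Z \to X_F$ is the base change of the diagonal of the affine---hence separated---morphism $\Spec k_\Z \to \Spec \Z$.)

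The heart of the argument is the local identification of $\gamma$ with the map of \autoref{surjectiveness from Spec}. By \autoref{tensor product}, applied with $A = k$ and $R = F$, one has $B_{i,\Z} \otimes_{k_\Z} F \cong F[B_i]/I_i$ where $I_i = \langle \lambda_k 1_{B_i} - 1_k \lambda_{B_i} \mid \lambda \in k \rangle$, which is exactly the ring $R$ of \autoref{surjectiveness from Spec}; taking $\lambda = 0$ shows $1_F 0_{B_i} \in I_i$, so this ring is a quotient of $(B_i)_F$, confirming that $Z_i$ sits inside $U_{i,F}$ as asserted. I would then compare the two maps into $\SCong U_i$: by \autoref{factorization from schemes} the restriction $\gamma|_{U_{i,F}}$ sends a prime $\mathfrak{p}$ of $(B_i)_F$ to the congruence kernel of $B_i \to \big((B_i)_F\big)^\bullet \to \big((B_i)_F/\mathfrak{p}\big)^\bullet$, whereas the map of \autoref{surjectiveness from Spec} sends $\mathfrak{q} \in \Spec(F[B_i]/I_i)$ to the congruence kernel of $B_i \to (F[B_i]/I_i)^\bullet \to \big((F[B_i]/I_i)/\mathfrak{q}\big)^\bullet$. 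For $\mathfrak{q}$ corresponding to a prime $\mathfrak{p}$ of $(B_i)_F$ containing the defining ideal, the quotients agree and both composites are $b \mapsto \overline{1b}$, so $\gamma|_{Z_i}$ coincides with that map and therefore $\gamma(Z_i) = \SCong_k B_i = \SCong_k U_i$. (If the structure map $k \to B_i$ is not injective, then $I_i$ contains a unit of $F$, so $F[B_i]/I_i = 0$ and both $Z_i$ and $\SCong_k U_i$ are empty, and the equality still holds.)

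Finally I would globalize. Since $Z = \bigcup_i Z_i$ and, by \autoref{strong covering}, $\SCong_k X = \bigcup_i \SCong_k U_i$, taking images gives $\gamma(Z) = \bigcup_i \gamma(Z_i) = \bigcup_i \SCong_k U_i = \SCong_k X$. The essential mathematics is entirely contained in \autoref{surjectiveness from Spec}; what remains here is bookkeeping. The main obstacle I anticipate is exactly the compatibility in the previous paragraph: verifying that the map $\gamma$, which is built from the full base extension $(B_i)_F$, restricts on the closed subscheme $Z_i$ to the map built from the smaller ring $F[B_i]/I_i$, and that this identification is independent of the chosen affine open so that it glues consistently across overlaps.
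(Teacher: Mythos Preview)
Your proposal is correct and follows essentially the same route as the paper: reduce to affines, invoke \autoref{tensor product} to identify $B_{i,\Z}\otimes_{k_\Z}F$ with $F[B_i]/I_i$, apply \autoref{surjectiveness from Spec} to compute the image on each piece, and glue via \autoref{strong covering}. The only cosmetic difference is the order of construction---the paper first builds the closed subscheme from a sheaf of ideals $\mathcal{I}$ on $X_F$ and then identifies it with $X_\Z\times_{k_\Z}\Spec F$ via \autoref{tensor product}, whereas you start from the fiber product and verify it embeds as a closed subscheme---but the content is the same.
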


\begin{proof}
For each $U = \MSpec B$ affine open of $X$, define
\[
I_U := \langle \lambda_k 1_B - 1_k \lambda_B \mid \lambda\in k \rangle \triangleleft B_F. 
\]
Note that $B_F / I_U \simeq F[B] / I_U'$, where $I_U' := \langle \lambda_k 1_B - 1_k \lambda_B \mid \lambda\in k \rangle \triangleleft F[B]$. Let $\mathcal{I}_U$ be the sheaf of ideals on $U_F$ induced by $I_U$. If $U$ and $V$ are affine open subsets of $X$ and $W \subseteq U\cap V$ is a principal open of both $U$ and $V$, then $\mathcal{I}_U|_W = \mathcal{I}_W = \mathcal{I}_V|_W$. Thus there exists a sheaf of ideals $\mathcal{I}$ on $X_F$ such that $\mathcal{I}|_{U_F} = \mathcal{I}_U$ for every affine open $U \subseteq X$.

Let $Z$ be the closed subscheme of $X_F$ corresponding to $\mathcal{I}$. If $U\subseteq X$ is an affine open, one has $Z \times_{X_F} U_F = \Spec (B_F / I_U)$ and, by \autoref{surjectiveness from Spec}, 
\[
\gamma(Z \times_{X_F} U_F) = \SCong_k U \subseteq \SCong X. 
\]
By \autoref{tensor product}, $Z = X_\Z \times_{k_\Z} \Spec F$ and, by \autoref{strong covering}, $\gamma(Z) = \SCong_k X$.
\end{proof}

\begin{cor}
\label{XZ to SCong X is surjective}
The map $\gamma: X_\Z \rightarrow \SCong X$ is surjective.
\end{cor}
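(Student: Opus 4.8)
The plan is to obtain the surjectivity as a direct consequence of \autoref{image = SCong_k}, applied with the base monoid $k$ taken as small as possible. Since $\SCong X = \SCong_{\F_1} X$ and every monoid scheme is canonically an $\F_1$-scheme, the natural choice is $k = \F_1$. Observe that $\F_1 = \{0,1\}$ is a domain, as it embeds into $\C^\bullet$, and that $\F_1$ is a pointed submonoid of $F^\bullet$ for \emph{any} field $F$; thus the hypotheses of \autoref{image = SCong_k} hold with $k = \F_1$ and, say, $F = \Q$.

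The first step is to compute the base extension $(\F_1)_\Z$. The monoid algebra $\Z[\F_1]$ is free of rank two on the elements $0_{\F_1}, 1_{\F_1}$, and passing to the quotient by $\langle 1_\Z 0_{\F_1} \rangle$ annihilates the summand coming from $0_{\F_1}$, leaving $(\F_1)_\Z \cong \Z$. Therefore the closed subscheme furnished by \autoref{image = SCong_k}, namely $X_\Z \times_{(\F_1)_\Z} \Spec F = X_\Z \times_\Z \Spec F = X_F$, is all of $X_F$, and the theorem gives $\gamma_F(X_F) = \SCong_{\F_1} X = \SCong X$, where $\gamma_F \colon X_F \to \SCong X$ is the map of \autoref{factorization from schemes} for $R = F$.

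It remains to transfer the surjectivity from $X_F$ to $X_\Z$. For this I would use that $\gamma_F$ factors through $\gamma = \gamma_\Z$: writing $X_F = X_\Z \times_\Z \Spec F$ with projection $\textup{pr}\colon X_F \to X_\Z$, one has $\gamma_F = \gamma_\Z \circ \textup{pr}$. This is precisely the compatibility underlying the reduction to $R = \Z$ in the proof of \autoref{factorization from schemes}; on an affine chart $U = \MSpec B$, a prime $\mathfrak{q} \triangleleft B_F$ and its contraction $\mathfrak{q} \cap B_\Z$ define the same congruence kernel on $B$, because $B_\Z / (\mathfrak{q} \cap B_\Z) \hookrightarrow B_F / \mathfrak{q}$ is injective. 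Granting this, $\SCong X = \gamma_F(X_F) = \gamma_\Z(\textup{pr}(X_F)) \subseteq \gamma_\Z(X_\Z) \subseteq \SCong X$, so $\gamma_\Z$ is surjective. The one non-formal point is this factorization $\gamma_F = \gamma_\Z \circ \textup{pr}$, which I expect to be the main (though mild) obstacle; the remaining ingredients — $\F_1$ being a domain, the identification $(\F_1)_\Z \cong \Z$, and $X_F = X_\Z \times_\Z \Spec F$ — are routine.
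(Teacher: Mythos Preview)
Your proof is correct and follows essentially the same route as the paper: apply \autoref{image = SCong_k} with $k=\F_1$ and $F=\Q$ to get that $\gamma_\Q\colon X_\Q\to\SCong X$ is surjective, then use the factorization $\gamma_\Q=\gamma_\Z\circ\textup{pr}$ (the commutative triangle $X_\Q\to X_\Z\to\SCong X$) to conclude. The paper simply asserts this triangle commutes, while you spell out the computation $(\F_1)_\Z\cong\Z$ and the affine-chart verification of the factorization, but the argument is the same.
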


\begin{proof}
As $\F_1 \subseteq \Q$ and $\SCong X = \SCong_{\F_1} X$, by \autoref{image = SCong_k}, the map $X_\Q \rightarrow \SCong X$ is surjective. As the diagram
\[
\begin{tikzcd}
X_\Q \arrow[r] \arrow[dr, two heads] &
X_\Z \arrow[d]
\\
& \SCong X
\end{tikzcd}
\]
commutes, the result follows.
\end{proof}

\subsection{The topology of \texorpdfstring{$\SCong_k X$}{SCongk X}} We prove that if $X$ is a $k$-scheme locally of finite type, then $\SCong_k X$ is a sober space, \textit{i.e.}, every irreducible closed subset of $\SCong_k X$ has a unique generic point.

\begin{thm}
\label{generic points}
Let $k$ be a domain and $B$ a finitely generated $k$-algebra. Then $\SCong_k B$ is sober.
\end{thm}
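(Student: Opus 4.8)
The plan is to exhibit, for every nonempty irreducible closed subset $C \subseteq \SCong_k B$, a concrete generic point, and to get uniqueness for free from the $T_0$ axiom. The first thing I would record is the specialization order. Since $\{U_{a,b}\}$ is a subbasis of $\Cong B$, and $\SCong_k B$ carries the induced topology, a point lies in the closure of another exactly when the congruences are nested: $\mathfrak{c} \in \overline{\{\mathfrak{d}\}}$ if and only if $\mathfrak{d} \subseteq \mathfrak{c}$. In particular two distinct congruences are separated by some $U_{a,b}$, so $\SCong_k B$ is $T_0$ and any generic point is automatically unique; this reduces soberness to existence of generic points. Throughout I will use the elementary fact that in an irreducible space any \emph{finite} family of nonempty open subsets has nonempty common intersection.

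Given a nonempty irreducible closed $C$, the candidate generic point is $\mathfrak{c}_0 := \bigcap_{\mathfrak{d} \in C} \mathfrak{d}$. The intersection of congruences is a congruence, and by construction $(a,b) \notin \mathfrak{c}_0$ if and only if $C \cap U_{a,b} \neq \emptyset$. I would first check $k$-injectivity: picking any $\mathfrak{d} \in C$ (possible since $C \neq \emptyset$), the inclusion $\mathfrak{c}_0 \subseteq \mathfrak{d}$ yields a surjection $B/\mathfrak{c}_0 \twoheadrightarrow B/\mathfrak{d}$ along which $k \to B/\mathfrak{d}$ is injective, forcing $k \to B/\mathfrak{c}_0$ to be injective. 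For primality, suppose $B/\mathfrak{c}_0$ were not integral: then there are $a,b,c$ with $(ab,ac) \in \mathfrak{c}_0$ but $(a,0),(b,c) \notin \mathfrak{c}_0$. Since each $\mathfrak{d} \in C$ is prime, $(ab,ac) \in \mathfrak{d}$ forces $(a,0) \in \mathfrak{d}$ or $(b,c) \in \mathfrak{d}$, so $C \subseteq V_{a,0} \cup V_{b,c}$, writing $V_{x,y}$ for the complement of $U_{x,y}$. Irreducibility then gives $C \subseteq V_{a,0}$ or $C \subseteq V_{b,c}$, contradicting that both $C \cap U_{a,0}$ and $C \cap U_{b,c}$ are nonempty. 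Hence $\mathfrak{c}_0$ is prime.

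Next I would verify that $\mathfrak{c}_0$ is strong by way of the intrinsic characterization \autoref{intrinsic domains}: having integrality, it remains to bound $n$-th roots. If $[b_1],\dots,[b_{n+1}]$ were pairwise distinct $n$-th roots of some $[a]$ in $B/\mathfrak{c}_0$, then each of the finitely many opens $U_{b_i,b_j}$ meets $C$, so by irreducibility some single $\mathfrak{d} \in C$ lies in all of them; but then $B/\mathfrak{d}$ would contain $n+1$ distinct $n$-th roots of $[a]$, impossible since $B/\mathfrak{d}$ is a domain. Thus $B/\mathfrak{c}_0$ is integral and root-bounded, hence a domain, and $\mathfrak{c}_0 \in \SCong_k B$. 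It then remains to show $\mathfrak{c}_0 \in C$. Because $\mathfrak{c}_0 \subseteq \mathfrak{d}$ for all $\mathfrak{d} \in C$, the closure $\overline{\{\mathfrak{c}_0\}} = \{\mathfrak{d} : \mathfrak{c}_0 \subseteq \mathfrak{d}\}$ already contains $C$, so it suffices to prove membership, after which closedness of $C$ gives the reverse inclusion and hence $C = \overline{\{\mathfrak{c}_0\}}$. If $\mathfrak{c}_0 \notin C$, choose a basic neighborhood $\bigcap_{i=1}^r U_{a_i,b_i} \ni \mathfrak{c}_0$ disjoint from $C$; but $\mathfrak{c}_0 \in U_{a_i,b_i}$ means $C \cap U_{a_i,b_i} \neq \emptyset$ for each $i$, so irreducibility makes $C \cap \bigcap_{i} U_{a_i,b_i}$ nonempty, a contradiction.

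The step I expect to be the main obstacle is the strongness check in the third paragraph: the intersection of a family of domains need not obviously be a domain, and this is precisely where the root-counting criterion of \autoref{intrinsic domains} must be fed by the irreducibility of $C$. I would also remark that this route appears not to use the finite generation of $B$; that hypothesis is presumably what one needs when globalizing to an arbitrary $k$-scheme locally of finite type, rather than for the affine statement itself.
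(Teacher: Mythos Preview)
Your proof is correct and takes a genuinely different route from the paper. The paper argues indirectly via the surjection $\varphi^*:\Spec R\to\SCong_k B$ of \autoref{surjectiveness from Spec}, where $R=F[B]/\langle 1_F\lambda_B-\lambda_F 1_B\rangle$ for a field $F\supseteq k$; finite generation of $B$ over $k$ makes $R$ Noetherian, so $(\varphi^*)^{-1}(Z)$ has finitely many irreducible components, and the image of the generic point of a suitable component is shown to be the generic point of $Z$. Your argument instead builds the generic point directly as $\mathfrak{c}_0=\bigcap_{\mathfrak d\in C}\mathfrak d$ and checks by hand that it is a $k$-congruence, with the strongness step driven by the intrinsic root-counting criterion \autoref{intrinsic domains} together with the finite-intersection property of nonempty opens in an irreducible space. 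The paper's approach has the virtue of fitting into the broader narrative of transferring results from classical scheme theory along $\gamma$; yours is more elementary, avoids the auxiliary ring $R$ and Noetherianity entirely, and in fact establishes the affine statement without any finite generation hypothesis---your closing remark is on point.
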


\begin{proof}
If $k \rightarrow B$ is not injective, then $\SCong_k B = \emptyset$ has no irreducible closed subsets.

Suppose that $k \rightarrow B$ is injective. Let $F$ be a field with an inclusion $k \hookrightarrow F^\bullet$ and define 
\[
R:= F[B]/ \langle 1_F \lambda_B - \lambda_F 1_B \mid \lambda\in k \rangle. 
\]
Let $\varphi: B \rightarrow R^\bullet$ be given by $b \mapsto \overline{1b}$. By \autoref{surjectiveness from Spec}, $\varphi^*: \Spec R \rightarrow \SCong B$ has image $\SCong_k B$.

Let $Z\subseteq\SCong_k B$ be an irreducible closed subset. Then $(\varphi^*)^{-1}(Z)$ is a closed subset of $\Spec R$. As $R$ is a Noetherian ring, there exists $W_1, \dotsc, W_n \subseteq \Spec R$ irreducible closed subsets such that $(\varphi^*)^{-1}(Z) = W_1 \cup \cdots \cup W_n$. For each $i$, let $\eta_i \in \Spec R$ be the generic point of $W_i$. In $\SCong_k B$, one has
\[
Z = \overline{\varphi^*\big((\varphi^*)^{-1}(Z)\big)} = \underset{i=1}{\overset{n}{\bigcup}} \; \overline{\varphi^*(W_i)}.
\]
As $Z$ is irreducible, there exists $\ell$ such that $Z = \overline{\varphi^*(W_\ell)}$. Note that $(\varphi^*)^{-1} \big( \overline{\{\varphi^*(\eta_\ell)\}} \big)$ is a closed subset of $\Spec R$ that contains $\eta_\ell$. Thus $W_\ell \subseteq (\varphi^*)^{-1} \big( \overline{\{\varphi^*(\eta_\ell)\}} \big)$, which implies
\[
\varphi^*(W_\ell) \subseteq \varphi^* \bigg( (\varphi^*)^{-1} \big(\overline{\{\varphi^*(\eta_\ell)\} }\big) \bigg) = \overline{\{\varphi^*(\eta_\ell)\} }
\]
and, consequently, $Z = \overline{\varphi^*(W_\ell)} \subseteq \overline{\{\varphi^*(\eta_\ell)\} }$. Therefore $\varphi^*(\eta_\ell)$ is a generic point of $Z$.

As $\Cong B$ is a $T_0$ space and $\SCong_k B$ is a subspace of $\Cong B$, one has uniqueness.
\end{proof}

\begin{df}
A morphism of monoid schemes $\varphi: X \rightarrow Y$ is \textit{locally of finite type} if $Y$ has an affine open covering $\{\MSpec B_i\}_{i \in I}$ such that, for each $i \in I$, the subscheme $\varphi^{-1}(\MSpec B_i)$ has an affine open covering $\{\MSpec A_{ij}\}_{j \in J_i}$ with $A_{ij}$ finitely generated $B_i$-algebra for all $j \in J_i$.

An $Y$-scheme $X$ is \textit{locally of finite type} if the map $X \rightarrow Y$ is locally of finite type.
\end{df}

\begin{thm}
\label{strong congruence space is sober}
Let $k$ be a domain and $X$ a $k$-scheme locally of finite type. Then $\SCong_k X$ is sober.
\end{thm}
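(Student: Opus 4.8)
The plan is to reduce the global sobriety statement to the affine case already established in \autoref{generic points}. Since $X$ is a $k$-scheme locally of finite type, by definition it has an affine open covering $\{U_i\}_{i \in I}$ with each $U_i = \MSpec A_i$ where $A_i$ is a finitely generated $k$-algebra. By \autoref{strong covering}, the collection $\{\SCong_k U_i\}_{i \in I}$ is then an open covering of $\SCong_k X$, and by \autoref{generic points} each piece $\SCong_k U_i = \SCong_k A_i$ is sober. So the content of the proof is purely topological: I want to show that a space admitting an open covering by sober subspaces is itself sober, given that the ambient space is $T_0$.

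First I would establish the $T_0$ property for $\SCong_k X$ globally. Each $\SCong_k U_i$ sits as a subspace of $U_i^\textup{cong} \subseteq X^\textup{cong}$, and $X^\textup{cong}$ is built by gluing the $T_0$ spaces $\Cong \Gamma U_i$; since two distinct points either lie in a common affine piece (where $T_0$ holds) or are separated by the open covering, $\SCong_k X$ is $T_0$. This gives uniqueness of generic points for free, so the real work is existence. Let $Z \subseteq \SCong_k X$ be an irreducible closed subset. Choosing an index $i$ with $Z \cap \SCong_k U_i \neq \emptyset$, I would argue that $Z \cap \SCong_k U_i$ is a nonempty open subset of the irreducible space $Z$, hence irreducible and dense in $Z$, and it is closed in the sober subspace $\SCong_k U_i$. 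By \autoref{generic points} it has a generic point $\eta$ in $\SCong_k U_i$, meaning its closure within $\SCong_k U_i$ is $\overline{\{\eta\}} \cap \SCong_k U_i = Z \cap \SCong_k U_i$.

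The key step is then to verify that $\eta$ is a generic point of $Z$ in the whole space, \emph{i.e.}, that the closure $\overline{\{\eta\}}$ taken in $\SCong_k X$ equals $Z$. One inclusion is clear since $Z$ is closed and contains $\eta$, so $\overline{\{\eta\}} \subseteq Z$. For the reverse, I would use that $Z \cap \SCong_k U_i$ is dense in $Z$: since $\eta$ is dense in $Z \cap \SCong_k U_i$ (its closure in $\SCong_k U_i$ recovers this set), and $Z \cap \SCong_k U_i$ is dense in the closed irreducible set $Z$, the point $\eta$ is dense in $Z$, giving $Z \subseteq \overline{\{\eta\}}$. Combining, $Z = \overline{\{\eta\}}$, so $Z$ has a generic point.

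I expect the main obstacle to be the bookkeeping around which closure operator is being used: the subspace closure in $\SCong_k U_i$ versus the ambient closure in $\SCong_k X$. The clean way to handle this is the standard fact that if $U$ is an \emph{open} subset of a topological space $Y$ and $\eta \in U$, then the closure of $\{\eta\}$ in $Y$ restricted to $U$ equals the closure of $\{\eta\}$ in $U$; combined with density of $Z \cap U$ in $Z$, this transports the generic point from the open piece to the whole irreducible set without any monoid-theoretic input. Since \autoref{strong covering} guarantees the pieces are open, this transitivity-of-genericity-along-dense-opens argument should go through cleanly, and uniqueness follows from the $T_0$ property as noted.
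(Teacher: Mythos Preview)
Your proof is correct and follows essentially the same strategy as the paper's: reduce to the affine finitely generated case via \autoref{strong covering} and \autoref{generic points}, then use that the generic point of a nonempty open subset of an irreducible closed set is a generic point of the whole set, with uniqueness coming from the $T_0$ property. The paper's argument differs only cosmetically in that it first picks an arbitrary affine open meeting $Z$ and then refines it to principal opens that are finitely generated over $k$, whereas you go directly to the covering furnished by the definition of locally of finite type (which is legitimate since any affine open covering of $\MSpec k$ must contain $\MSpec k$ itself).
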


\begin{proof}
Let $\varphi: X \rightarrow \MSpec k$ be the morphism that makes $X$ a $k$-scheme and $Z$ an irreducible closed subset of $\SCong_k X$. Let $U = \MSpec B$ be an affine open of $X$ such that $Z \cap \SCong_k U \neq \emptyset$. Then there exists $\{f_i\}_{i \in I} \subseteq B$ such that $\{\MSpec \; B[f_i^{-1}]\}_{i \in I}$ covers $U$ and, for each $i$, there exists $\{g_{ij}\}_{j \in J_i} \subseteq k$, such that $\{\MSpec \; k[g_{ij}^{-1}]\}_{i \in I}$ covers $\varphi^{-1}(\MSpec B[f_i^{-1}])$ and with $B[f_i]$ a finitely generated $k[g_{ij}]$-algebra for all $j \in J_i$. Note that $B[f_i]$ is a finitely generated $k$-algebra for all $i \in I$.

Let $\ell \in I$ such that $Z_\ell := Z \cap \SCong_k B[f_\ell^{-1}]$ is not empty. As $Z_\ell$ is an irreducible closed subset of $\SCong_k B[f_\ell^{-1}]$, by \autoref{generic points}, $Z_\ell$ has a unique generic point $\eta$. As $Z_\ell$ is a non-empty open subset of $Z$, one has that $\eta$ is the generic point of $Z$. 
\end{proof}

\section{Strong congruence space of toric monoid schemes}

\subsection{Algebraically closed pointed groups} We introduce algebraically closed pointed groups and gather some results about them (cf. \cite[pp. 519-520]{Deitmar08}).

\begin{df}
An \textit{extension} of pointed groups is an injective morphism $f: \G_1 \hookrightarrow \G_2$. In this case we identify $\G_1$ with its image via $f$. The extension is \textit{trivial} if it is an isomorphism. An element $\lambda \in \G_2$ is \textit{algebraic} over $\G_1$ if there exists $n \geq 1$ such that $\lambda^n \in \G_1$. We say that $\G_2$ is \textit{algebraic over} $\G_1$ if every $x\in \G_2$ is algebraic over $\G_1$. 
\end{df}

\begin{df}
\label{algebraically closed}
A pointed group $\G$ is \textit{algebraically closed} if for every $\alpha$ in $\G^\times$ and $n \geq 1$ one has $\#\{x\in \G|\, x^n = \alpha \}$ equal to $n$.
\end{df}

\begin{rem}
By \autoref{intrinsic domains}, every algebraically closed pointed group is a domain.
\end{rem}

\begin{prop}
\label{algebraic closure}
Let $\G$ be a pointed group that is a domain. Then there exists an algebraically closed pointed group  $\overline{\G}$ and an algebraic extension $\iota: \G \hookrightarrow \overline{\G}$ such that every extension of pointed groups $f: \G \rightarrow \G'$ with $\G'$ algebraically closed factorizes as 
\[
\begin{tikzcd}
\G \arrow[d, "\iota", swap]\arrow{r}{f} & \G'\\
\overline{\G} \arrow[ur, "j", swap]
\end{tikzcd}
\]
with $j: \overline{\G} \rightarrow \G'$ injective. We call $\overline{\G}$ an \textit{algebraic closure of $\G$}.
\end{prop}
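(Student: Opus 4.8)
The plan is to build $\overline{\G}$ concretely inside the multiplicative monoid of an algebraically closed field, and then to establish the universal property by a Zorn's lemma argument that reuses the case analysis already carried out in \autoref{prop-surjection}.

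First I would realize $\G$ inside a field. Since $\G$ is a domain, by \autoref{equivalence for domain - char zero} there is a field $K$ of characteristic zero together with an embedding $\G \hookrightarrow K^\bullet$; let $\overline{K}$ be an algebraic closure of $K$ and set
\[
\overline{\G} := \{0\} \cup \{x \in \overline{K}^\times \mid x^n \in \G \text{ for some } n \geq 1\}.
\]
This is a pointed submonoid of $\overline{K}^\bullet$: it is closed under products since $(xy)^{nm} = (x^n)^m(y^m)^n$, and closed under inverses since $(x^{-1})^n = (x^n)^{-1} \in \G$ because $\G$ is a pointed group. Hence $\overline{\G}$ is a pointed group, and it is a domain as a pointed submonoid of a field. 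The extension $\iota: \G \hookrightarrow \overline{\G}$ is algebraic by construction. To see that $\overline{\G}$ is algebraically closed in the sense of \autoref{algebraically closed}, fix $\alpha \in \overline{\G}^\times$ and $n \geq 1$: the polynomial $x^n - \alpha$ has exactly $n$ roots in $\overline{K}$ because $\mathrm{char}\,\overline{K} = 0$ and $\alpha \neq 0$, and every such root $\beta$ satisfies $\beta^{nm} = \alpha^m \in \G$ for a suitable $m$, so $\beta \in \overline{\G}$; thus $\#\{x \in \overline{\G} \mid x^n = \alpha\} = n$.

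It then remains to prove the universal property. Given an extension $f: \G \hookrightarrow \G'$ with $\G'$ algebraically closed, I would consider the poset of pairs $(H, g)$ in which $\G \subseteq H \subseteq \overline{\G}$ is an intermediate pointed group and $g: H \hookrightarrow \G'$ is an injective morphism extending $f$, ordered by extension. Chains admit upper bounds given by unions, so Zorn's lemma yields a maximal element $(H, g)$, and I claim $H = \overline{\G}$. If not, choose $x \in \overline{\G} \setminus H$; since $\iota$ is algebraic, $x^N \in \G \subseteq H$ for some $N$, so $w := \min\{\ell > 0 \mid x^\ell \in H\}$ is finite. Extending $g$ to $\langle H, x\rangle$ amounts to selecting an image $\xi \in \G'$ with $\xi^w = g(x^w)$ for which the resulting morphism remains injective; because $\G'$ is algebraically closed it is divisible and has exactly $w$ elements of order dividing $w$, which is precisely what allows a suitable root to be chosen. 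This is the situation of Cases 2 and 3 of \autoref{prop-surjection}, and the injectivity of the extension can be verified exactly as there, using the bound on roots of unity supplied by \autoref{bound on roots of 1 for the group of fractions}. The existence of such an extension contradicts maximality, so $H = \overline{\G}$ and $j := g$ is the desired injective factorization of $f$.

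The hard part will be exactly this injectivity check in the extension step: one must control the torsion that appears when adjoining $x$, and this is where the domain hypothesis on $\G'$, applied through \autoref{bound on roots of 1 for the group of fractions}, is indispensable. A pleasant simplification is that, since $\G \hookrightarrow \overline{\G}$ is algebraic, every element outside $H$ already has a power in $H$, so Case 1 of \autoref{prop-surjection} (where no such power exists) never arises and only the two genuinely delicate cases need to be invoked.
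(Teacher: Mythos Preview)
Your proposal is correct and follows essentially the same approach as the paper: both construct $\overline{\G}$ as the set of elements in an algebraic closure of a field containing $\G$ that have some power in $\G$, and both establish the universal property by a Zorn's lemma argument on partial injective extensions of $f$. The only difference is cosmetic: the paper disposes of the extension step in one line by noting that $\widetilde{\G} := \langle \G_z, \lambda\rangle \simeq \G_z[T]/\langle T^m \sim x\rangle$ (with $m$ minimal and $x = \lambda^m$) and asserting that $\G'$, being algebraically closed, contains a suitable $y \notin \operatorname{im}(z)$ with $y^m = z(x)$ making the extension injective, whereas you spell this out by invoking Cases~2 and~3 of \autoref{prop-surjection}; your observation that Case~1 cannot occur (since $\iota$ is algebraic) is correct and explains why the paper needs no case distinction here.
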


\begin{proof}
By \autoref{equivalence for domain}, there exists a field $K$ of characteristic zero and an injective morphism $\varphi: \G \hookrightarrow K^\bullet$ (see \autoref{equivalence for domain - char zero}). Let $L$ be an algebraic closure of $K$ (in the usual field-theoretic sense) and define
\[
\overline{\G} := \{\lambda \in L \mid \exists n \geq 1 \text{ such that } \lambda^n \in \G\}.
\]
Note that $\overline{\G}$ is an algebraically closed pointed group and $\im \varphi \subseteq \overline{\G}$. The map 
\[
\begin{array}{cccc}
\iota: & \G & \longhookrightarrow & \overline{\G} \\
           & \alpha & \longmapsto & \varphi(\alpha)
\end{array}
\]
makes $\overline{\G}$ algebraic over $\G$.

 Let $f: \G \rightarrow \G'$ be an extension of pointed groups with $\G'$ algebraically closed. We say that a morphism $g: \G_g \rightarrow \G'$ is an \textit{extension} of $f$ if it is injective, $\G_g$ is a pointed subgroup of $\overline{\G}$ containing $\G$ and $f$ factorizes through $g$. Let $\mathcal{E}$ be the set of all extensions of $f$, with the partial order given by $g \leq h$ if $\G_g \subseteq \G_h$ and $h|_{\G_g} = g$. By Zorn's lemma, $\mathcal{E}$ has a maximal element $z$. Next we show that $\G_z = \overline{\G}$.

Suppose that there exists $\lambda \in \overline{\G}\backslash \G_z$. Let $m := \textup{min}\{n\geq 1 \mid \lambda^n \in \G_z\}$ and $x := \lambda^m$. Let $\widetilde{\G}$ be the pointed submonoid of $\overline{\G}$ generated by $\G_z$ and $\lambda$. As $\G'$ is algebraically closed and $\widetilde{\G} \simeq \G_z[T] / \langle T^m \sim x \rangle$, there exists $y \in \G' \backslash \textup{im}(z)$ with $y^m = x$ and an extension $r: \widetilde{\G} \rightarrow \G'$ of $z$ sending $\lambda$ to $y$. Then $r \in \mathcal{E}$ and $r>z$, which is a contradiction. Therefore $\G_z = \overline{\G}$.
\end{proof}

\begin{cor}
Let $\G$ be a pointed group that is also a domain. If $\G_1$ and $\G_2$ are two algebraic closures of $\G$, then $\G_1 \simeq \G_2$.
\end{cor}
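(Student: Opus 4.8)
The plan is to use the universal property established in \autoref{algebraic closure} to produce an injective comparison map between the two closures, and then to upgrade it to an isomorphism by proving surjectivity. Write the two algebraic closures as $(\G_1, \iota_1)$ and $(\G_2, \iota_2)$, where $\iota_i : \G \hookrightarrow \G_i$ is the accompanying algebraic extension. Since $\G_2$ is algebraically closed, the universal property of $\G_1$ applied to the extension $\iota_2 : \G \hookrightarrow \G_2$ yields an injective morphism of pointed groups $j : \G_1 \hookrightarrow \G_2$ with $j \circ \iota_1 = \iota_2$. It then remains to show that $j$ is surjective.

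First I would record how $j$ behaves on roots of unity. For each $n \geq 1$, taking $\alpha = 1$ in \autoref{algebraically closed} shows that $\mu_n(\G_i) := \{x \in \G_i \mid x^n = 1\}$ has exactly $n$ elements for $i = 1, 2$, since both $\G_1$ and $\G_2$ are algebraically closed. As $j$ is a morphism it sends $\mu_n(\G_1)$ into $\mu_n(\G_2)$, and being injective between two sets of the same finite cardinality $n$, this restriction is a bijection; hence $j(\mu_n(\G_1)) = \mu_n(\G_2)$.

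Now take any nonzero $\alpha \in \G_2$. Because $\iota_2$ is an algebraic extension, there are $n \geq 1$ and $\lambda \in \G$ with $\alpha^n = \iota_2(\lambda) = j(\iota_1(\lambda))$; as $\alpha \neq 0$ and $\G_2$ is a domain, $\alpha^n \neq 0$, so $\lambda \neq 0$. Set $\beta := \iota_1(\lambda) \in \G_1^\times$; since $\G_1$ is algebraically closed there is $\gamma \in \G_1$ with $\gamma^n = \beta$, and $\gamma \neq 0$ as $\G_1$ is a domain. Then $\big(j(\gamma)\big)^n = j(\beta) = \alpha^n$, so $\big(\alpha \cdot j(\gamma)^{-1}\big)^n = 1$ in the pointed group $\G_2$, i.e. $\alpha \cdot j(\gamma)^{-1} \in \mu_n(\G_2) = j(\mu_n(\G_1))$. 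Writing $\alpha \cdot j(\gamma)^{-1} = j(\nu)$ with $\nu \in \mu_n(\G_1)$ gives $\alpha = j(\gamma\nu) \in \im j$. Since $0 \in \im j$ trivially, $j$ is surjective, hence an isomorphism $\G_1 \simeq \G_2$.

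The main obstacle is precisely the surjectivity step: the existence of injective maps in both directions does not by itself force an isomorphism, so one genuinely needs the interplay between the two defining properties of an algebraic closure. Being algebraically closed is used twice, both to extract an $n$-th root of $\beta$ inside $\G_1$ and to pin down the cardinality of roots of unity exactly; being algebraic over $\G$ is what reduces an arbitrary element $\alpha$ of $\G_2$ to (a unit multiple of) a root of something coming from the base, after which the root-of-unity bijection closes the gap.
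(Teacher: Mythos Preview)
Your proof is correct and follows essentially the same approach as the paper: obtain an injective $\G$-morphism $j:\G_1\hookrightarrow\G_2$ from \autoref{algebraic closure}, then establish surjectivity via a finite cardinality argument on $n$-th roots. The only cosmetic difference is that the paper argues directly with the sets $L_i=\{x\in\G_i\mid x^m=\lambda^m\}$ (which have exactly $m$ elements by algebraic closedness), whereas you first isolate the bijection on $\mu_n$ and then reduce to it; the content is the same.
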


\begin{proof}
By \autoref{algebraic closure}, there exists an injective morphism of $\G$-algebras $\varphi: \G_1 \rightarrow \G_2$. Let $\lambda \in \G_2$. As $\lambda$ is algebraic over $\G$, there exists $m := \textup{min}\{n\geq 1 \mid \lambda^n \in \G\}$. Let $L_i := \{x \in \G_i \mid x^m = \lambda^m\}$, for $i = 1, 2$. As $\varphi(L_1) \subseteq L_2$ and $\#L_1 = m = \#L_2$, one has $\varphi(L_1) = L_2$. As $\lambda \in L_2$, we conclude that $\lambda \in \textup{im}\, \varphi$.
\end{proof}

\begin{ex}
By the proof of \autoref{algebraic closure}, an algebraic closure of $\F_1$ is 
\[
\F_{1^\infty} = \{z \in \C \mid \exists n\geq 1 \text{ such that } z^n = 1 \} \cup \{0 \}.
\]
\end{ex}

\begin{prop}
Let $\G$ be a pointed group that is also a domain. Then $\
\G$ is algebraically closed if, and only if, there is no non-trivial algebraic extension $\G \hookrightarrow \G'$ with $\G'$ a pointed group that is also a domain.
\end{prop}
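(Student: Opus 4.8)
The plan is to establish the two implications separately, using the root-counting characterization of domains from \autoref{intrinsic domains} and the existence of algebraic closures from \autoref{algebraic closure}. For the forward implication, I would assume $\G$ is algebraically closed and argue by contradiction. Suppose there is a non-trivial algebraic extension $\iota:\G \hookrightarrow \G'$ with $\G'$ a pointed group that is a domain, and identify $\G$ with its image, so we may choose $\lambda \in \G' \setminus \G$. Algebraicity of the extension yields $n \geq 1$ with $\alpha := \lambda^n \in \G$; since $\G'$ is a domain (hence integral) and $\lambda \neq 0$, we get $\alpha \neq 0$, so $\alpha \in \G^\times$. The decisive step is then a counting argument: \autoref{algebraically closed} provides exactly $n$ solutions of $x^n = \alpha$ inside $\G$, all of which lie in $\G'$, whereas \autoref{intrinsic domains} bounds the number of solutions of $x^n = \alpha$ inside $\G'$ by $n$. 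But $\lambda$ is a further solution lying outside $\G$, giving $n+1$ distinct solutions in $\G'$, a contradiction.

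For the converse I would invoke the existence of an algebraic closure directly. Assuming $\G$ has no non-trivial algebraic extension into a pointed group that is a domain, \autoref{algebraic closure} furnishes an algebraically closed pointed group $\overline{\G}$ together with an algebraic extension $\iota: \G \hookrightarrow \overline{\G}$. Since every algebraically closed pointed group is a domain (by \autoref{intrinsic domains}), the map $\iota$ is precisely an algebraic extension of $\G$ into a pointed group that is a domain, so by hypothesis it must be trivial. Hence $\iota$ is an isomorphism and $\G \simeq \overline{\G}$ is algebraically closed.

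I expect the main obstacle, such as it is, to be the bookkeeping in the counting step of the forward direction: one must confirm that $\alpha$ genuinely lands in $\G^\times$, so that the \emph{equality} $\#\{x \in \G \mid x^n = \alpha\} = n$ from \autoref{algebraically closed} is available (and not merely an inequality), and that $\lambda$ is distinct from the $n$ roots already present in $\G$, which is immediate from $\lambda \notin \G$. The converse presents no real difficulty, as all of its substance is absorbed into \autoref{algebraic closure}; the only thing left to check is that the algebraic closure is an admissible target for the hypothesis, which follows from its being a domain.
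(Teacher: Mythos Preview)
Your proposal is correct and follows essentially the same approach as the paper: the forward direction is the same root-counting argument (the paper phrases it directly rather than by contradiction, showing any $\lambda \in (\G')^\times$ already lies in $\G$, but the content is identical), and the converse is likewise obtained by applying \autoref{algebraic closure} and noting that an algebraically closed pointed group is a domain.
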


\begin{proof}
Assume that $\G$ is algebraically closed. Let $\G \hookrightarrow \G'$ be an algebraic extension of pointed groups with $\G'$ a domain. If $\lambda \in (\G')^\times$, then there exists $n \geq 1$ such that $\alpha := \lambda^n \in \G$. Note that $I := \{x \in \G \mid x^n = \alpha\}$ has $n$ elements. As $I$ is contained in $J := \{y \in \G' \mid y^n = \alpha\}$, which has at most $n$ elements, one has $I = J$. As $\lambda \in J$, we conclude that $\lambda \in \G$. Thus $\G = \G'$, \textit{i.e.}, the extension $\G \hookrightarrow \G'$ is trivial.

The opposite direction follows from \autoref{algebraic closure}.
\end{proof}

\subsection{Main theorem}

We prove that the strong congruence space of a toric monoid scheme over an algebraically closed pointed group reflects its expected dimension.

We use the following notations from \cite{Brasselet01} and \cite{Eur15} for toric geometry. For a torsion-free abelian group $L$ of finite rank, we use $L_\R$ to denote the vector space $L\otimes_\Z\R$. Throughout this subsection, fix $M$ and $N$ dual torsion-free abelian groups of finite rank $n$ with perfect pairing $\langle \;, \; \rangle: M\times N \rightarrow \Z$, which extends to an $\R$-linear pairing $\langle \;, \; \rangle: M_\R \times N_\R \rightarrow \R$. A $\textit{rational polyhedral cone}$ in $N_\R$ is a set of the form 
\[
\sigma = \text{Cone}(z_1, \dotsc, z_\ell) := \{\lambda_1 z_1 + \cdots + \lambda_\ell z_\ell \in N_\R \mid \lambda_i \in \R_{\geq 0}, \text{ for all } i\}
\]
for some $z_1, \dotsc, z_\ell \in N$, where we identify $N$ with its image in $N_\R$ via $x\mapsto x\otimes 1$. The \textit{dimension} of $\sigma$ is the dimension of the linear subspace generated by $\sigma$. The \textit{dual cone} of $\sigma$ is the set 
\[
\sigma^\vee := \{x \in M_\R \mid \langle x, y \rangle \geq 0, \text{ for all } y \in \sigma \}.
\]
It is a fact that $\sigma^\vee$ is a rational polyhedral cone in $M_\R$ and $(\sigma^\vee)^\vee = \sigma$. A \textit{face} of $\sigma$ is a set of the form
\[
\beta = \sigma \cap H_m, \quad \text{where} \quad H_m:= \{u \in N_\R \mid \langle m, u \rangle = 0\},
\]
for some $m \in \sigma^\vee$. Any face of $\sigma$ is a rational polyhedral cone. We say that $\sigma$ is \textit{strongly convex} if $\dim \sigma^\vee = \dim M_\R = n$. Equivalently, $\sigma$ is strongly convex if it does not contain any non-zero linear subspace. 

A \textit{fan} is a finite collection $\Sigma$ of rational polyhedral cones in $N_\R$ satisfying:
\begin{enumerate}
    \item Every $\sigma \in \Sigma$ is strongly convex;
    \item Every face of a cone in $\Sigma$ is also in $\Sigma$;
    \item If $\sigma_1, \sigma_2 \in \Sigma$, then $\beta:= \sigma_1 \cap \sigma_2$ is a face of both $\sigma_1$ and $\sigma_2$.
\end{enumerate}
We define the \textit{dimension} of $\Sigma$ as $\dim \Sigma := \dim N_\R = n$.

Given a rational polyhedral cone $\sigma \subseteq N_\R$, we define the affine semigroups
\[
S_\sigma := \sigma^\vee \cap M \quad \text{and} \quad S_\sigma^\vee := \sigma \cap N,
\]
which are finitely generated monoids. Note that $S_{(\sigma^\vee)} = S_\sigma^\vee$.  

\begin{rem}
The notations $\dim \Sigma$ and $S_\sigma^\vee$ are introduced in the present text.
\end{rem}

Given a fan $\Sigma$, let $\mathcal{U}$ be the category whose objects are the elements of $\Sigma$ and the morphisms are the inclusions of faces. Given two cones $\tau \subseteq \sigma$ in $\Sigma$, the induced map 
\[
\MSpec \F_1[S_\tau] \rightarrow \MSpec \F_1[S_\sigma]
\]
is an open embedding. Define $X(\Sigma)$ as the monoid scheme
\[
\underset{\sigma \in \mathcal{U}}{\textup{colim}} \MSpec \F_1[S_\sigma].
\]
Let $A$ be a pointed monoid. The monoid scheme 
\[
X(\Sigma) \times_{\F_1} \MSpec A
\]
has an open covering given by $\{\MSpec A[S_\sigma]\}_{\sigma \in \Sigma}$ (\textit{cf.} \cite[Prop 3.1]{Deitmar05}).

Let $\F$ be a pointed group that is also a domain, $\sigma$ a rational polyhedral cone and $\tau$ a face of $\sigma^\vee$. Note that the set $\mathfrak{p}_\tau := (\F[S_\sigma] \backslash \F[S_\tau^\vee]) \cup \{0\}$ is an $\F$-prime of $\F[S_\sigma]$. The next result shows that every prime in $\MSpec_\F \F[S_\sigma]$ has this form.

\begin{lemma}
\label{bijection primes-faces}
Let $\F$ be a pointed group that is also a domain and $\sigma$ a rational polyhedral cone. Then the map $\tau \mapsto \mathfrak{p}_\tau$ is a bijection between the set of faces of $\sigma^\vee$ and $\MSpec_\F \F[S_\sigma]$ that reverses inclusions.
\end{lemma}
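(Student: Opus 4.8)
The plan is to reduce the statement to the classical toric correspondence between faces of a rational cone and faces of the associated affine semigroup, exploiting the fact that $\F$ is a pointed group to control the ideals of $\F[S_\sigma]$. First I would analyze the ideals of $A := \F[S_\sigma]$. Since every nonzero element of $A$ is a single monomial $\lambda\chi^m$ with $\lambda \in \F\backslash\{0\}$ invertible, an ideal $I$ satisfies $\lambda\chi^m \in I \Leftrightarrow \chi^m \in I$, so $I$ is determined by its exponent set $J_I := \{m \in S_\sigma \mid \chi^m \in I\}$, an ideal of the monoid $S_\sigma$. For a prime $P$ I would show that the complement $F_P := \{m \in S_\sigma \mid \chi^m \notin P\}$ is a \emph{face} of $S_\sigma$: it contains $0$ and is closed under addition because $A\backslash P$ is multiplicatively closed, and it satisfies $m+m' \in F_P \Rightarrow m, m' \in F_P$ because $P$ is an ideal. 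Conversely every face $F$ gives a prime $(\F[S_\sigma]\backslash\F[F])\cup\{0\}$, so $P \mapsto F_P$ is an inclusion-reversing bijection between primes of $\F[S_\sigma]$ and faces of the monoid $S_\sigma$.

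Second I would invoke the standard toric dictionary identifying faces of the affine semigroup $S_\sigma = \sigma^\vee \cap M$ with faces of the cone $\sigma^\vee$: the assignment $\tau \mapsto \tau\cap M = S_\tau^\vee$ sends a face of $\sigma^\vee$ to a monoid face of $S_\sigma$, with inverse $F \mapsto \textup{Cone}(F)$, and this is an inclusion-preserving bijection. Composing with the bijection of the previous paragraph yields the inclusion-reversing correspondence $\tau \mapsto \mathfrak{p}_\tau$, once I check that the prime attached to the face $S_\tau^\vee$ is precisely $\mathfrak{p}_\tau = (\F[S_\sigma]\backslash\F[S_\tau^\vee])\cup\{0\}$. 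Injectivity of $\tau \mapsto S_\tau^\vee$ is exactly the fact that a rational cone is generated by its lattice points, i.e. $\tau = \textup{Cone}(\tau\cap M)$.

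Finally I would verify that the image is genuinely $\MSpec_\F \F[S_\sigma]$ and not merely $\MSpec \F[S_\sigma]$. Collapsing $\mathfrak{p}_\tau$ to $0$ leaves exactly the monomials with exponents in $S_\tau^\vee$, so $\F[S_\sigma]/\mathfrak{p}_\tau \simeq \F[S_\tau^\vee]$. Since $S_\tau^\vee \subseteq M$ is cancellative and $\F$ is a domain, $\F[S_\tau^\vee]$ embeds into $\F[M]$, which in turn embeds as a multiplicative submonoid of the fraction field of the integral domain $F[M]$, where $F$ is a field containing $\F$ and $M$ is torsion-free; hence $\F[S_\sigma]/\mathfrak{p}_\tau$ is a domain, so $\mathfrak{p}_\tau$ is prime, and the inclusion $\F \hookrightarrow \F[S_\tau^\vee]$ shows it is an $\F$-prime. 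Running the same computation on an arbitrary prime shows every prime of $\F[S_\sigma]$ is automatically an $\F$-prime, so the bijection from the first two paragraphs already has the correct target.

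I expect the main obstacle to be making the semigroup/cone face correspondence precise: checking that $\textup{Cone}(F)$ is genuinely a face of $\sigma^\vee$, that $\tau\cap M$ is a monoid face of $S_\sigma$, and that these constructions are mutually inverse. This is the essentially toric-geometric input, for which I would lean on the references on toric geometry; the remaining steps are bookkeeping made possible by $\F$ being a pointed group.
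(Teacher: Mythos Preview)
Your argument is correct and takes a somewhat different route from the paper's. You factor the bijection as
\[
\{\text{faces of }\sigma^\vee\}\;\longleftrightarrow\;\{\text{monoid faces of }S_\sigma\}\;\longleftrightarrow\;\MSpec\F[S_\sigma],
\]
treating the first arrow as a black box from toric geometry and deriving the second from the fact that $\F$ is a pointed group (so ideals are determined by their exponent sets, and complements of primes are exactly monoid faces). The paper instead argues only for surjectivity and does so by an explicit construction: given $\mathfrak p\in\MSpec_\F\F[S_\sigma]$, it intersects with $S_\sigma$, invokes \cite[Prop.~2.2.1]{Ogus18} to produce a homomorphism $h:S_\sigma\to\N$ whose zero set is $S_\sigma\setminus\mathfrak q$, extends $h$ to a linear functional on $M_\R$, dualizes to an element $u\in\sigma$, and sets $\tau=\sigma^\vee\cap H_u$. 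In effect the paper \emph{proves} the semigroup-face $\leftrightarrow$ cone-face correspondence along the way via this separating-hyperplane argument, whereas you quote it. Your approach is cleaner and more modular if one is willing to import the toric dictionary; the paper's is more self-contained. Your final paragraph, observing that every prime of $\F[S_\sigma]$ is automatically an $\F$-prime because nonzero elements of $\F$ are units and the quotient by an ideal identifies only elements of the ideal with $0$, is a point the paper leaves implicit.
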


\begin{proof}
It is enough to show that the map $\{\textup{faces of } \sigma^\vee\} \rightarrow \MSpec_\F \F[S_\sigma]$ above is surjective.

Let $\mathfrak{p} \in \MSpec_\F \F[S_\sigma]$. We use $S_\sigma$ to denote the semigroup $\F_1[S_\sigma]\backslash\{0\} \subseteq \F[S_\sigma]$ and define $\mathfrak{q}:= \mathfrak{p}\cap S_\sigma$. As $S_\sigma \backslash \mathfrak{q}$ is a multiplicative subset, by \cite[Prop. 2.2.1]{Ogus18}, there exists a morphism of semigroups $h: S_\sigma \rightarrow \N$ such that $h^{-1}(0) = S_\sigma \backslash \mathfrak{q}$. We have a unique extension of $h$ to a morphism of groups $\tilde{h}: M \rightarrow \Z$ and to a map of $\R$-algebras $f: M_\R \rightarrow \R$. One knows that there exists a unique $u \in N_\R$ such that $f(m) = \langle m, u \rangle$ for all $m \in M_\R$. Note that $u \in \sigma$ and define $\tau := \{m \in \sigma^\vee \mid \langle m,  u \rangle = 0\}$, which is a face of $\sigma^\vee$. Next we show that $\mathfrak{p} = \mathfrak{p}_\tau$.

If $\lambda.\chi^m \in \mathfrak{p} \backslash \{0\}$, one has $m \in \mathfrak{q}$, which implies $h(m) \neq 0$ and, consequently, $\lambda . \chi^m$ not in $\F[S_\tau^\vee]$. Thus $\mathfrak{p} \subseteq \mathfrak{p}_\tau$. If $\alpha.\chi^s \in \mathfrak{p}_\tau \backslash \{0\}$, then $s \notin S_\tau^\vee = S_\sigma \cap \tau$, which implies $h(s) \neq 0$ and, consequently, $s \in  \mathfrak{q}$. Therefore $\mathfrak{p}_\tau \subseteq \mathfrak{p}$.
\end{proof}

\begin{lemma}
\label{faces with same dim are equal}
Let $\sigma$ be a rational polyhedral cone and $\tau \subseteq \alpha$ faces of $\sigma^\vee$ such that $\dim \, \tau = \dim \, \alpha$. Then $\tau = \alpha$.
\end{lemma}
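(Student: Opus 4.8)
The plan is to exploit the supporting-hyperplane description of faces together with the fact that the dimension of a cone equals the dimension of the linear subspace it spans. Since $\tau$ is a face of $\sigma^\vee$ and $(\sigma^\vee)^\vee = \sigma$, I would first write $\tau = \sigma^\vee \cap H_u$ for some $u \in \sigma$, where $H_u := \{x \in M_\R \mid \langle x, u \rangle = 0\}$. Because $u \in (\sigma^\vee)^\vee$, every $x \in \sigma^\vee$ satisfies $\langle x, u \rangle \geq 0$, so $H_u$ is a supporting hyperplane and $\tau = \{x \in \sigma^\vee \mid \langle x, u \rangle = 0\}$.

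Next I would compare linear spans. Let $V_\tau$ and $V_\alpha$ be the linear subspaces of $M_\R$ generated by $\tau$ and $\alpha$, so that $\dim \tau = \dim V_\tau$ and $\dim \alpha = \dim V_\alpha$ by definition. The inclusion $\tau \subseteq \alpha$ gives $V_\tau \subseteq V_\alpha$, and the hypothesis $\dim \tau = \dim \alpha$ then forces $V_\tau = V_\alpha$. Since $H_u$ is a linear subspace containing $\tau$, I also have $V_\tau \subseteq H_u$.

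Combining these observations yields $\alpha \subseteq V_\alpha = V_\tau \subseteq H_u$, so $\langle x, u \rangle = 0$ for every $x \in \alpha$. As $\alpha \subseteq \sigma^\vee$, this gives $\alpha \subseteq \{x \in \sigma^\vee \mid \langle x, u \rangle = 0\} = \tau$; together with $\tau \subseteq \alpha$ this yields $\tau = \alpha$.

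I do not expect a serious obstacle here, as the statement is a standard convexity fact. The one point requiring care is the correct transcription of the face description to $\sigma^\vee$ through the biduality $(\sigma^\vee)^\vee = \sigma$: this guarantees that the functional $u$ cutting out $\tau$ is nonnegative on all of $\sigma^\vee$, and hence on $\alpha$, rather than merely on $\tau$ itself.
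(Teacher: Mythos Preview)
Your argument is correct and essentially identical to the paper's: both write $\tau = \sigma^\vee \cap H_u$ for some $u \in (\sigma^\vee)^\vee = \sigma$, use $\dim\tau = \dim\alpha$ together with $\tau \subseteq \alpha$ to conclude that the linear spans coincide, and then deduce $\alpha \subseteq H_u \cap \sigma^\vee = \tau$. The only difference is notational ($V_\tau$ versus $\operatorname{span}\tau$).
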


\begin{proof}
Let $t \in (\sigma^\vee)^\vee$ such that $\tau = \sigma^\vee \cap H_t$. As $\dim \, \tau = \dim \, \alpha$, one has $\textup{span} \, \tau = \textup{span} \, \alpha$. As $H_t$ is a linear subspace, one has $\alpha \subseteq H_t$ if, and only if, $\textup{span} \, \alpha \subseteq H_t$. Note that $\tau \subseteq \alpha \subseteq \textup{span} \, \alpha \cap \sigma^\vee = \textup{span} \, \tau \cap \sigma^\vee \subseteq H_t \cap \sigma^\vee = \tau$. Thus $\tau = \alpha$.
\end{proof}

\begin{lemma}
\label{faces for all dimensions}
Let $\sigma$ be a rational polyhedral cone and $\tau \subseteq \alpha$ faces of $\sigma^\vee$. Let $\ell$ be an integer such that $\dim \, \tau \leq \ell \leq \dim \, \alpha$. Then there exists a face $\delta$ of $\sigma^\vee$ of dimension $\ell$ with $\tau \subseteq \delta \subseteq \alpha$.
\end{lemma}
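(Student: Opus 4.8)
The statement is a purely combinatorial assertion about the face lattice of the rational polyhedral cone $C := \sigma^\vee \subseteq M_\R$. I will freely use the standard facts that if $\tau \subseteq \beta$ are faces of $C$ then $\tau$ is a face of $\beta$, that any face of a face of $C$ is again a face of $C$, and that for a face $\tau$ of a cone $\alpha$ one has $\tau = \alpha \cap \textup{span}\,\tau$ together with the extremality property (if $x,y \in \alpha$ and $x+y \in \tau$ then $x,y \in \tau$). The plan is to build $\delta$ by raising the dimension one step at a time, starting from $\tau$. Concretely, I would prove the single-step claim: \emph{if $\beta$ is a face of $C$ with $\tau \subseteq \beta \subseteq \alpha$ and $\dim\beta < \dim\alpha$, then there is a face $\beta'$ of $C$ with $\beta \subseteq \beta' \subseteq \alpha$ and $\dim\beta' = \dim\beta + 1$.} Granting this, one constructs a chain $\tau = \delta_0 \subseteq \delta_1 \subseteq \cdots \subseteq \delta_{\ell - \dim\tau} =: \delta$ of faces contained in $\alpha$ with $\dim\delta_i = \dim\tau + i$, stopping once the dimension reaches $\ell$; since $\dim\delta_i = \dim\tau + i \le \ell - 1 < \dim\alpha$ at every stage before the last, the hypothesis needed to apply the single step is always available.

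For the single step I would fix $\beta$ as above and pass to the quotient $q \colon M_\R \to M_\R / \textup{span}\,\beta$. Because $\beta$ is a face of $\alpha$, extremality forces $\overline{\alpha} := q(\alpha)$ to be a \emph{pointed} rational polyhedral cone of dimension $\dim\alpha - \dim\beta > 0$: if $q(a) = -q(a')$ with $a,a' \in \alpha$ then $a+a' \in \alpha \cap \textup{span}\,\beta = \beta$, so $a,a' \in \beta$ and $q(a)=0$. A nonzero pointed cone possesses an extreme ray $\rho$, i.e. a one-dimensional face; I would then set $\beta' := q^{-1}(\rho) \cap \alpha$. The standard correspondence between faces of a quotient cone and faces of $\alpha$ containing $\beta$ — compatible with dimension up to the shift by $\dim\beta$ — shows that $\beta'$ is a face of $\alpha$ with $\beta \subseteq \beta'$ and $\dim\beta' = \dim\beta + 1$. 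Since $\dim\beta' > \dim\beta$, \autoref{faces with same dim are equal} guarantees $\beta \subsetneq \beta'$, so each step makes genuine progress, and $\beta'$ is again a face of $C$ because a face of the face $\alpha$ is a face of $C$.

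The main obstacle is precisely this face correspondence under $q$ together with pointedness: one must verify that the image of a polyhedral cone is again polyhedral, that $\overline{\alpha}$ is pointed (done above), and that pulling an extreme ray of $\overline\alpha$ back along $q$ and intersecting with $\alpha$ yields an honest face of $\alpha$ whose dimension is exactly $\dim\beta+1$ and which contains $\beta$. These are standard points of polyhedral geometry, and everything else is bookkeeping on dimensions, with \autoref{faces with same dim are equal} ensuring the inclusions along the chain are strict. An alternative, dual route avoiding quotients is to argue downward from $\alpha$: every proper face of a polyhedral cone is contained in a facet (a codimension-one face), so one repeatedly replaces $\alpha$ by a facet containing $\tau$ until the dimension drops to $\ell$; this relies on the equally standard fact that each proper face is the intersection of the facets containing it.
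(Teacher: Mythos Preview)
Your argument is correct. Your primary route---ascending from $\tau$ one dimension at a time by passing to the quotient $M_\R/\textup{span}\,\beta$, observing that the image of $\alpha$ is pointed, and pulling back an extreme ray---is sound: the extremality argument for pointedness is right, and the face correspondence under the quotient (faces of $\overline{\alpha}$ $\leftrightarrow$ faces of $\alpha$ containing $\beta$, with a dimension shift of $\dim\beta$) is standard and gives exactly $\dim\beta' = \dim\beta + 1$.

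The paper takes precisely your ``alternative, dual route'': it reduces to the single case $\ell = \dim\alpha - 1$ (a facet of $\alpha$ containing $\tau$) using transitivity of faces, and then cites \cite[p.~10]{Fulton93} for that step rather than proving it. So your main argument is genuinely different---bottom-up via quotients and extreme rays instead of top-down via facets---and is more self-contained, at the cost of invoking slightly more polyhedral machinery (images of cones, the quotient face correspondence). The paper's version is a two-line reduction plus a citation; yours would stand on its own without the reference. Your invocation of \autoref{faces with same dim are equal} to ensure strictness of the chain is fine but not strictly necessary, since $\dim\beta' = \dim\beta + 1 > \dim\beta$ already forces $\beta \subsetneq \beta'$.
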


\begin{proof}
If $\beta$ is a face of $\alpha$ and $\delta$ is a face of $\beta$, then $\delta$ is a face of $\alpha$. Thus it is enough to prove the result for $\ell = \dim \alpha - 1$, which is done in \cite[p. 10]{Fulton93}.
\end{proof}

\begin{lemma}
\label{torsion-free quotient by sum}
Let $G$ be a torsion-free abelian group of finite rank and $H_1, H_2$ subgroups such that $H_1 \cap H_2 = \{0\}$ and $G/H_i$ is torsion-free for $i=1, 2$. Then $G/ (H_1\oplus H_2)$ is torsion free.
\end{lemma}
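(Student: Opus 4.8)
The plan is to recast the conclusion as a statement about saturation: since $G$ is torsion-free, $G/(H_1\oplus H_2)$ is torsion-free exactly when $H_1\oplus H_2$ is saturated in $G$, i.e. when $ng\in H_1\oplus H_2$ for some $g\in G$ and $n\geq 1$ forces $g\in H_1\oplus H_2$. First I would cut the problem down to a single quotient. Because $H_1\cap H_2=\{0\}$, the composite $H_2\hookrightarrow G\twoheadrightarrow G/H_1$ is injective with image $\overline{H_2}:=(H_1\oplus H_2)/H_1$, and the third isomorphism theorem gives
\[
G/(H_1\oplus H_2)\;\cong\;(G/H_1)\big/\overline{H_2}.
\]
As $G/H_1$ is torsion-free by hypothesis, it suffices to prove that $\overline{H_2}$ is saturated in $G/H_1$.

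Next I would feed in the second hypothesis symmetrically. Writing $ng=h_1+h_2$ with $h_i\in H_i$ and reducing modulo $H_1$ gives $n\bar g=\overline{h_2}\in\overline{H_2}$, so the task is to upgrade this to $\bar g\in\overline{H_2}$. To use both hypotheses at once I would pass through the diagonal map $\psi\colon G\hookrightarrow (G/H_1)\times(G/H_2)$, whose kernel is $H_1\cap H_2=\{0\}$ and whose target is torsion-free; here $\psi(H_1\oplus H_2)=\overline{H_2}\times\overline{H_1}$, with $\overline{H_1}$ the image of $H_1$ in $G/H_2$. The two reductions of $ng=h_1+h_2$ then say precisely that $n\,\psi(g)\in\overline{H_2}\times\overline{H_1}$, so it remains to show this product subgroup is saturated in $(G/H_1)\times(G/H_2)$ and to transport the conclusion back through $\psi$.

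The main obstacle, and the step on which the whole argument turns, is this saturation claim. Passing to rational spans, with $V_i:=H_i\otimes_\Z\Q$ inside $G\otimes_\Z\Q$, the two torsion-free hypotheses read $H_i=V_i\cap G$, and what must be established is the lattice identity
\[
(V_1\cap G)+(V_2\cap G)\;=\;(V_1+V_2)\cap G .
\]
This is exactly the assertion that the two individual saturations glue to a saturated sum: the inclusion ``$\subseteq$'' is automatic, while ``$\supseteq$'' carries all the content and does not follow from torsion-freeness of either quotient taken alone. I expect to concentrate the effort here, using that $G$ has finite rank to pass to a minimal putative torsion class: choosing $g$ of least order $n>1$ modulo $H_1\oplus H_2$ and playing the congruence $n\bar g=\overline{h_2}$ in $G/H_1$ against $n\bar g=\overline{h_1}$ in $G/H_2$, I would attempt to strip a prime factor from $n$ and contradict minimality. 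Controlling this interaction between the two quotient lattices is the delicate heart of the proof, and is where I anticipate the real difficulty to lie.
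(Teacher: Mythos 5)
Your reductions are correct as far as they go: the third-isomorphism step $G/(H_1\oplus H_2)\cong (G/H_1)/\overline{H_2}$, the translation of torsion-freeness of $G/H_i$ into saturation $H_i=V_i\cap G$, and the identification of the crux as the inclusion $(V_1+V_2)\cap G\subseteq (V_1\cap G)+(V_2\cap G)$ are all sound. But the step you flagged as the ``delicate heart'' is not just delicate --- it is false, and so is the lemma as stated. Take $G=\Z^2$, $H_1=\Z\cdot(1,0)$ and $H_2=\Z\cdot(1,2)$. Then $H_1\cap H_2=\{0\}$ and both quotients $G/H_i\cong\Z$ are torsion-free, since each generator is a primitive vector; yet $H_1\oplus H_2=\{(x,y)\in\Z^2\mid y \text{ even}\}$ has index $2$, so $G/(H_1\oplus H_2)\cong\Z/2\Z$ has torsion. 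In your notation, $(V_1+V_2)\cap G=\Z^2\supsetneq H_1+H_2$: the two individual saturations genuinely fail to glue, and the minimal-counterexample, prime-stripping argument you planned cannot succeed. (Concretely, $(0,1)$ witnesses the failure: $2\cdot(0,1)\in H_1\oplus H_2$ but $(0,1)\notin H_1\oplus H_2$, and in $G/H_1\cong\Z$ the image $\overline{H_2}=2\Z$ is not saturated.)

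For comparison, the paper's own proof founders on the same rock. It chooses a splitting $\theta$ of $\pi_1:G\to G/H_1$ (already using ``torsion-free of finite rank $\Rightarrow$ free,'' which requires finite generation --- $\Q$ is a rank-one counterexample, though this is harmless in the paper's application) and asserts $\ker(\pi_2\circ\theta)=(H_1\oplus H_2)/H_1$. Only the inclusion $\ker(\pi_2\circ\theta)\subseteq (H_1\oplus H_2)/H_1$ holds: for $g=h_1+h_2$ one gets $\theta(g+H_1)=h_2+h'$ with $h'\in H_1$, and this lies in $H_2$ only if $h'=0$, which an arbitrary section does not guarantee; in the example above every section has $\ker(\pi_2\circ\theta)=0\subsetneq 2\Z$. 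So your proposal stalls exactly where the published argument contains its error, and your instinct that the claim ``does not follow from torsion-freeness of either quotient taken alone'' was the right one --- it does not follow at all. Any repair must strengthen the hypotheses (e.g., demand that the image of $H_2$ in $G/H_1$ be saturated, which by your own reduction is equivalent to the conclusion, or extract extra structure from the specific groups $H_{\mathfrak{s}}$ and $S^\vee_{\Lambda_{\mathfrak{s}'}}$ at the point where the lemma is invoked in the proof of the affine dimension theorem), rather than find a cleverer proof of the statement as written.
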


\begin{proof}
Let $\pi_i: G \rightarrow G/H_i$ be the natural projection for $i = 1,2$. As $G / H_1$ is torsion-free, it is also free, thus there exists a morphism $\theta: G/H_1 \rightarrow G$ such that $\pi_1 \circ \theta = \textup{id}_{G/H_1}$. For each $g \in G$ there exists $h_g \in H_1$ satisfying $\theta(g + H_1) = g + h_g$, thus the kernel of $\pi_2 \circ \theta$ is $(H_1 \oplus H_2) / H_1$.

As $G/H_2$ is torsion-free and one has the injection $(G/H_1) / \ker (\pi_2 \circ \theta) \hookrightarrow G/H_2$ and the isomorphism 
\[
(G/H_1) / \big( (H_1 \oplus H_2) / H_1 \big) \simeq G / (H_1\oplus H_2), 
\]
the result follows.
\end{proof}

\begin{thm}
\label{dimension - affine case}
Let $\F$ be an algebraically closed pointed group and $\sigma \subseteq N_\R$ a strongly convex rational polyhedral cone. Then $\SCong_\F \F[S_\sigma]$ is a catenary space of Krull dimension $n$ (the rank of $N$). 
\end{thm}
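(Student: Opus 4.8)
The plan is to use sobriety to convert the Krull dimension into a combinatorial count of specialization chains, and then to give an explicit description of the points of $\SCong_\F \F[S_\sigma]$ together with the inclusion order among them. First I would record that $\F[S_\sigma]$ is a finitely generated $\F$-algebra and that $\F$ is a domain (being algebraically closed), so \autoref{strong congruence space is sober} shows that $\SCong_\F \F[S_\sigma]$ is sober. Its Krull dimension is therefore the supremum of lengths of chains $\mathfrak{c}_0 \subsetneq \cdots \subsetneq \mathfrak{c}_d$ of strong $\F$-congruences, because the closure of a point $\mathfrak{c}$ is exactly $\{\mathfrak{c}' \mid \mathfrak{c} \subseteq \mathfrak{c}'\}$, i.e. specialization is inclusion. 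To each $\mathfrak{c}$ I attach the invariant $d(\mathfrak{c}) := \textup{rank}\big(\kappa(\mathfrak{c})^\times/\F^\times\big)$, the rank over $\F$ of its residue pointed group, and the entire argument is organized around proving that $d$ is a strictly decreasing, drop-by-one rank function bounded by $n$.

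Second, I would describe the points. The null ideal of $\mathfrak{c}$ lies in $\MSpec_\F \F[S_\sigma]$, so by \autoref{bijection primes-faces} it is $\mathfrak{p}_\tau$ for a unique face $\tau$ of $\sigma^\vee$, and one checks $\F[S_\sigma]/\mathfrak{p}_\tau \cong \F[\tau\cap M]$, which is a domain by \autoref{intrinsic domains} (it is integral, and the root condition holds because $\F$ is algebraically closed and $M$ is torsion-free). By \autoref{k-localization} the congruences with null ideal $\mathfrak{p}_\tau$ are the strong $\F$-congruences of the pointed group $\F[M_\tau]$, where $M_\tau := \langle \tau\cap M\rangle$; passing to Grothendieck groups and using that $\F^\times$ is divisible, these are classified by pairs $(\bar K,\psi)$ with $\bar K \subseteq M_\tau$ a \emph{saturated} subgroup (so that $\F[M_\tau/\bar K]$ meets the root condition) and $\psi\colon\bar K\to\F^\times$ a character. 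In these coordinates $d(\mathfrak{c}) = \dim\tau - \textup{rank}\,\bar K$, and a direct computation yields that $(\tau,\bar K,\psi)\subseteq(\tau',\bar K',\psi')$ holds iff $\tau'\subseteq\tau$, the characters agree on $\bar K\cap M_{\tau'}\subseteq\bar K'$, and — crucially — $\bar K$ is \emph{face-preserving}, meaning that for $m,m'\in\tau\cap M$ with $m-m'\in\bar K$ one has $m\in\tau'\Leftrightarrow m'\in\tau'$.

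Third, for the lower bound I would exhibit a chain of length $n$: begin at the trivial congruence, the unique generic point, where $\tau=\sigma^\vee$, $\bar K=0$ and $d=n$ (as $\sigma$ strongly convex forces $\dim\sigma^\vee=n$); descend through a maximal flag of faces of $\sigma^\vee$ down to the minimal face $\tau_{\min}$, available by \autoref{faces for all dimensions}, keeping $\bar K=0$; then inside the fiber over $\mathfrak{p}_{\tau_{\min}}$ continue along a flag of saturated subgroups $0\subsetneq\cdots\subsetneq M_{\tau_{\min}}$ of increasing rank, ending at a closed point with residue group $\F$ and $d=0$. This chain has length $(n-\dim\tau_{\min})+\dim\tau_{\min}=n$. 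Since $d(\mathfrak{c})\le\dim\tau\le n$ always, the upper bound reduces to showing that every strict specialization strictly decreases $d$. The fiber case $\tau'=\tau$ is immediate, as $\bar K\subsetneq\bar K'$ raises the rank; the main obstacle is the face-dropping case. Here I must show that $\tau'\subsetneq\tau$ forces $\textup{span}\,\tau' + \textup{span}\,\bar K \subsetneq \textup{span}\,\tau$ (hence $d'<d$), and this is exactly where the face-preserving condition is used. I would argue by contradiction: if $\textup{span}\,\tau'+\textup{span}\,\bar K=\textup{span}\,\tau$, choose a supporting functional $u\ge 0$ on $\tau$ with $\tau'=\tau\cap\ker u$; since $\tau'$ is a proper face its dimension is strictly smaller (\autoref{faces with same dim are equal}), so $\textup{span}\,\bar K\not\subseteq\ker u$, and as $\bar K$ is a full lattice in its span it meets the open cone $\textup{int}(\tau+\textup{span}\,\tau')$ on which $u>0$. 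Taking $v\in\bar K$ there, $e\in\textup{relint}\,\tau'\cap M$ and $N\gg 0$, the points $m':=Ne$ and $m:=v+m'$ lie in $\tau\cap M$ with $m'\in\tau'$ but $u(m)=u(v)>0$, so $m\notin\tau'$, contradicting the face-preserving condition.

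Granting this monotonicity, $d$ is a strictly decreasing rank bounded by $n$ and attaining $n$ at the generic point, so the Krull dimension equals $n$. Finally, catenarity follows once I verify that every covering specialization drops $d$ by exactly one. When the $d$-gap is at least two I interpolate an intermediate congruence: either I insert a face of intermediate dimension (using \autoref{faces for all dimensions}) while keeping the subgroup data compatible, or, within a fixed face, I refine the saturated subgroup by one unit of rank, where \autoref{torsion-free quotient by sum} guarantees that the intermediate subgroups remain saturated. Consequently every maximal chain between two comparable congruences $\mathfrak{c}\subseteq\mathfrak{c}'$ has length exactly $d(\mathfrak{c})-d(\mathfrak{c}')$, which is the definition of a catenary space; combined with the dimension computation this proves the theorem.
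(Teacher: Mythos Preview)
Your approach coincides with the paper's: your $d(\mathfrak{c})=\dim\tau_\mathfrak{c}-\mathrm{rk}\,\bar K$ equals $n-\big(\mathfrak N(\mathfrak c)+\mathfrak T(\mathfrak c)\big)$ in the paper's notation, your triple $(\tau,\bar K,\psi)$ is the paper's $(\tau_\mathfrak{c},H_\mathfrak{c},\text{values of }\mathfrak a_\mathfrak{c})$, and both proofs reduce to showing this rank drops by exactly one on covering relations. Your strict-monotonicity step is right in substance, though the sentence ``$\bar K$ is a full lattice in its span, so it meets $\mathrm{int}(\tau+\mathrm{span}\,\tau')$'' is not literally correct; the fix is to pass to $M_\tau/M_{\tau'}$, where the \emph{image} of $\bar K$ is full-rank by the hypothesis $\mathrm{span}\,\bar K+\mathrm{span}\,\tau'=\mathrm{span}\,\tau$ and therefore meets the interior of the full-dimensional cone $\overline\tau$, and then lift.

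The genuine gap is the catenarity step when the null ideal changes. You propose to ``insert a face of intermediate dimension while keeping the subgroup data compatible'', but face-preserving of $\bar K$ relative to $\tau'$ does \emph{not} imply face-preserving relative to an intermediate face $\tau''$: nothing prevents $\bar K$ from identifying a lattice point of $\tau''\setminus\tau'$ with one of $\tau\setminus\tau''$, since both lie outside $\tau'$. Hence the congruence at $\tau''$ need not contain $\mathfrak c$, and your interpolation can fail. The paper's Case~2 proceeds differently: for a covering $\mathfrak r\lessdot\mathfrak r'$ with $I_{\mathfrak r}\subsetneq I_{\mathfrak r'}$ it first shows $H_{\mathfrak r'}\subseteq H_\mathfrak r$ (otherwise the congruence at face $\tau_{\mathfrak r'}$ with subgroup $H_\mathfrak r\cap H_{\mathfrak r'}$ already interpolates), then quotients both congruences by $H_{\mathfrak r'}$ to reduce to $H_{\mathfrak s'}=0$, and only after this reduction uses \autoref{torsion-free quotient by sum} to see that $H_\mathfrak s$ and $M_{\tau_{\mathfrak s'}}$ sit in direct sum inside the quotient lattice $\widetilde M$; this yields a basis vector $c_m$ outside both, and adjoining $\chi^{\overline{c_m}}\sim 1$ enlarges the subgroup at the \emph{same} face to produce the interpolant. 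Your sketch names the right lemma but skips the reduction and does not explain why the correct move is to enlarge $\bar K$ rather than to shrink the face.
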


\begin{proof}
Firstly, we define two functions $\mathfrak{N}$ and $\mathfrak{T}$ from $\SCong_\F \F[S_\sigma]$ to $\N$ whose sum is analogous to the height of a prime ideal in a ring. Let $\mathfrak{c}$ be an $\F$-congruence on $\F[S_\sigma]$. By \autoref{bijection primes-faces}, there exists a unique face $\tau_\mathfrak{c}$ of $\sigma^\vee$ such that $\mathfrak{p}_{\tau_\mathfrak{c}} = I_\mathfrak{c}$. Define  $\mathfrak{N}(\mathfrak{c})$ as $n- \dim \, \tau_\mathfrak{c}$. 

Let $\mathfrak{d}_\mathfrak{c}$ be the strong $\F$-congruence on $\F[S_{\tau_\mathfrak{c}}^\vee] \simeq \F[S_\sigma] / I_\mathfrak{c}$ induced by $\mathfrak{c}$. Let $\Lambda_\mathfrak{c}$ be the linear subspace of $M_\R$ generated by $\tau_\mathfrak{c}$. As $I_{\mathfrak{d}_\mathfrak{c}} = \{0\}$ and $\F[S_{\Lambda_\mathfrak{c}}^\vee] = \textup{Frac}(\F[S_{\tau_\mathfrak{c}}^\vee])$, one has $\mathfrak{d}_\mathfrak{c} = f^*(\mathfrak{a}_\mathfrak{c})$, where $f: \F[S_{\tau_\mathfrak{c}}^\vee] \rightarrow \F[S_{\Lambda_\mathfrak{c}}^\vee]$ is the inclusion and $\mathfrak{a}_\mathfrak{c}:= f_*(\mathfrak{d}_\mathfrak{c})  \in \SCong \F[S_{\Lambda_\mathfrak{c}}^\vee]$. Let $\equiv$ be the equivalence relation on $S_{\Lambda_\mathfrak{c}}^\vee$ given by $a\equiv b$ if $\chi^a \sim_{\mathfrak{a}_\mathfrak{c}} \lambda \chi^b$ for some $\lambda \in \F^\times$ and define the subgroup $H_\mathfrak{c}:= \{a \in S_{\Lambda_\mathfrak{c}}^\vee \mid a \equiv 0\}$ of $S_{\Lambda_\mathfrak{c}}^\vee$. Note that 
\[
\mathfrak{a}_\mathfrak{c} = \langle \chi^a \sim \lambda \mid a \in H_\mathfrak{c}  \text{ and } \chi^a \sim_{\mathfrak{a}_\mathfrak{c}} \lambda \rangle.
\]
Define $\mathfrak{T}(\mathfrak{c})$ as $\textup{rk}(H_\mathfrak{c})$ and note that $\mathfrak{t}(\mathfrak{c}) \leq \textup{rk}(S_{\Lambda_\mathfrak{c}}^\vee) =  \textup{dim} \, \tau_\mathfrak{c}$.

Next we show that $S_{\Lambda_\mathfrak{c}}^\vee / H_\mathfrak{c}$ is torsion-free. Let $y \in S_{\Lambda_\mathfrak{c}}^\vee$ and $r > 0$ such that $ry \in H_\mathfrak{c}$. Then there exists $\lambda \in \F^\times$ such that $(\chi^y)^r \sim_{\mathfrak{a}_\mathfrak{c}} \lambda$. As $\F$ is algebraically closed and $\mathfrak{a}_\mathfrak{c}$ is strong, there exists $\zeta \in \F$ such that $\zeta^r = \lambda$ and $\chi^y\sim_{\mathfrak{a}_\mathfrak{c}} \zeta$, thus $y \in H_\mathfrak{c}$.

Let $\mathfrak{c}_1$ and $\mathfrak{c}_2$ be $\F$-congruences on $\F[S_\sigma]$ such that $\mathfrak{c}_1 \subseteq \mathfrak{c}_2$. Let $\mathcal{C}$ be a maximal chain in $\SCong_\F \F[S_\sigma]$ with minimum $\mathfrak{c}_1$ and maximum $\mathfrak{c}_2$. We claim that $\mathcal{C}$ is finite. Note that $\mathcal{C}_I := \{I_\mathfrak{c} \mid \mathfrak{c} \in \mathcal{C}\}$  is a chain in $\MSpec_\F \F[S_\sigma]$, thus $\# \mathcal{C}_I \leq n$, and given $J \in \mathcal{C}_I$, one has that $\mathcal{H}_J := \{ H_\mathfrak{c} \mid \mathfrak{c} \in \mathcal{C} \text{ with } I_\mathfrak{c} = J\}$ is a chain whose length is bounded by $\textup{dim} \, \tau$, where $\tau$ is the unique face of $\sigma^\vee$ satisfying $\mathfrak{p}_\tau = J$. Next we show that the length of $\mathcal{C}$ is $\big(\mathfrak{N}(\mathfrak{c}_2) + \mathfrak{T}(\mathfrak{c}_2)\big) - \big(\mathfrak{N}(\mathfrak{c}_1) + \mathfrak{T}(\mathfrak{c}_1)\big)$.

It is enough to show that if $\mathfrak{r} \subsetneq \mathfrak{r}'$ are two consecutive congruences in $\mathcal{C}$, then 
\[
\big( \mathfrak{T}(\mathfrak{r'}) - \mathfrak{T}(\mathfrak{r})\big) + \big( \mathfrak{N}(\mathfrak{r'}) - \mathfrak{N}(\mathfrak{r}) \big) = 1.
\]

Let $\pi: \F[S_\sigma] \rightarrow \F[S_{\tau_\mathfrak{r}}^\vee] \simeq \F[S_\sigma] / I_\mathfrak{r}$ be the projection. If $\mathfrak{c} \in \SCong_\F \F[S_\sigma]$ satisfies $I_\mathfrak{r} \subseteq I_\mathfrak{c}$, then $\pi_*(\mathfrak{c}) \in \SCong_\F \F[S_{\tau_\mathfrak{r}}^\vee]$, $\mathfrak{N}(\pi_*(\mathfrak{c})) = \mathfrak{N}(\mathfrak{c}) - \mathfrak{N}(\mathfrak{r})$ and $\mathfrak{T}(\pi_*(\mathfrak{c})) = \mathfrak{T}(\mathfrak{c})$, thus we can assume $I_\mathfrak{r} = \{0\}$. Therefore $\Lambda_\mathfrak{r} = M_\R$ and we have two cases to analyze:

\medskip\noindent
\textbf{Case 1:} $I_\mathfrak{r'} = \{0\}$. Then $H_\mathfrak{r} \subseteq H_\mathfrak{r'}$, which implies 
$\mathfrak{T}(\mathfrak{r}) \leq \mathfrak{T}(\mathfrak{r'})$. One has the injection $H_\mathfrak{r'} / H_\mathfrak{r} \hookrightarrow S_{\Lambda_\mathfrak{r}}^\vee / H_\mathfrak{r}$, which shows that $H_\mathfrak{r'} / H_\mathfrak{r}$ is torsion-free. If $\mathfrak{T}(\mathfrak{r}) = \mathfrak{T}(\mathfrak{r'})$, then $H_\mathfrak{r} = H_\mathfrak{r'}$ and we conclude that $\mathfrak{r} = \mathfrak{r'}$. Thus $\mathfrak{T}(\mathfrak{r}) < \mathfrak{T}(\mathfrak{r'})$. 

If $\mathfrak{T}(\mathfrak{r}) \leq \mathfrak{T}(\mathfrak{r'}) -2$, let $s := \mathfrak{T}(\mathfrak{r'})$, $\ell := \mathfrak{T}(\mathfrak{r})$ and $\{b_1, \dotsc, b_s\}$ a $\Z$-basis of $H_\mathfrak{r'}$ such that $\{b_1, \dotsc, b_\ell\}$ is a $\Z$-basis of $H_\mathfrak{r}$. Then, for each $1 \leq i \leq s$, there exists a unique $\lambda_i \in \F^\times$ such that $b_i \sim_{\mathfrak{a}_\mathfrak{r'}} \lambda_i$. As $\mathfrak{r} \subseteq \mathfrak{r'}$, one has $b_j \sim_{\mathfrak{a}_\mathfrak{r}} \lambda_j$ for $1 \leq j \leq \ell$. Let $\mathfrak{a} \in \SCong_\F \F[S_{\Lambda_\mathfrak{r}}^\vee]$ be the congruence $\langle b_i \sim \lambda_i \mid 1 \leq i \leq \ell+1 \rangle$ and define $\mathfrak{c} := g^*(\mathfrak{a})$, where $g: \F[S_\sigma] \rightarrow \F[S_{\Lambda_\mathfrak{r}}^\vee]$ is the inclusion. Thus $\mathfrak{r} \subsetneq \mathfrak{c} \subsetneq \mathfrak{r'}$, which is a contradiction. Therefore, $\mathfrak{T}(\mathfrak{r}) = \mathfrak{T}(\mathfrak{r'}) - 1$.

\medskip\noindent
\textbf{Case 2:} $I_\mathfrak{r'} \neq \{0\}$. Suppose that $H_\mathfrak{r'} \not\subset H_\mathfrak{r}$, which is equivalent to $H_\mathfrak{r} \cap H_\mathfrak{r'} \subsetneq H_\mathfrak{r'}$. One has that $H_\mathfrak{r'} / H_\mathfrak{r} \cap H_\mathfrak{r'}$ is torsion free, because $S_{\Lambda_\mathfrak{r'}}^\vee \subsetneq S_{\Lambda_\mathfrak{r}}^\vee$ and $H_\mathfrak{r'} / H_\mathfrak{r} \cap H_\mathfrak{r'} \hookrightarrow S_{\Lambda_\mathfrak{r}}^\vee / H_\mathfrak{r}$. Let $\ell := \textup{rk}(H_\mathfrak{r} \cap H_\mathfrak{r'})$ and $s:= \mathfrak{T}(\mathfrak{r'})$. Then there exists a $\Z$-basis $\{b_1, \dotsc, b_s\}$ of $H_\mathfrak{r'}$ such that $\{b_1, \dotsc, b_\ell\}$ is a $\Z$-basis of $H_\mathfrak{r} \cap H_\mathfrak{r'}$. For each $1\leq i \leq s$, let $\lambda_i \in \F^\times$ such that $b_i \sim_{\mathfrak{a}_\mathfrak{r'}} \lambda_i$. Let $\mathfrak{b} \in \SCong_\F \F[S_{\Lambda_\mathfrak{r'}}^\vee]$ be the congruence $\langle b_i \sim \lambda_i \mid 1 \leq i \leq \ell \rangle$ and define $\mathfrak{c} := p^*(\mathfrak{b})$, where $p$ is the composition
\[
\F[S_\sigma] \relbar\joinrel\twoheadrightarrow \F[S_\sigma] / I_\mathfrak{r'} \simeq \F[S_{\tau_\mathfrak{r'}}^\vee] \longhookrightarrow \F[S_{\Lambda_\mathfrak{r'}}^\vee].
\]
Note that $I_\mathfrak{c} = I_\mathfrak{r'}$ and $H_\mathfrak{c} = H_\mathfrak{r} \cap H_\mathfrak{r'}$, thus $\mathfrak{r} \subsetneq \mathfrak{c} \subsetneq \mathfrak{r'}$, which is a contradiction. Therefore $H_\mathfrak{r'} \subseteq H_\mathfrak{r}$.

Let $s:=  \mathfrak{T}(\mathfrak{r'})$, $\{x_1, \dotsc, x_n\}$ a $\Z$-basis of $S_{\Lambda_\mathfrak{r}}^\vee$ such that $\{x_1, \dotsc, x_s\}$ is a $\Z$-basis of $G:= H_\mathfrak{r'}$ and $\lambda_i \in \F^\times$ such that $x_i \sim_{\mathfrak{a}_\mathfrak{r'}} \lambda_i$ for $1\leq i \leq s$. The map of groups
\[
\begin{array}{ccl}
S_{\Lambda_\mathfrak{r}}^\vee & \longrightarrow &(\F[S_{\Lambda_\mathfrak{r}}^\vee / G])^\times\\
 & & \\
x_i &\longmapsto & \smash{\left\{\begin{array}{ll}
      \lambda_i &\text{if } 1\leq i \leq s\\
    {}\chi^{\overline{x_i}} &\text{if } s+1 \leq i \leq n
    \end{array}\right.}
\end{array}
\]
\\
\noindent
induces a surjective morphism of $\F$-algebras $\varphi: \F[S_{\Lambda_\mathfrak{r}}^\vee] \twoheadrightarrow \F[S_{\Lambda_\mathfrak{r}}^\vee / G]$ with congruence kernel $\iota_*(\mathfrak{a}_\mathfrak{r'}) \subseteq \mathfrak{a}_\mathfrak{r}$, where $\iota:  \F[S_{\Lambda_\mathfrak{r'}}^\vee] \rightarrow \F[S_{\Lambda_\mathfrak{r}}^\vee]$ is the inclusion. As $S_{\Lambda_\mathfrak{r}}^\vee / S_{\Lambda_\mathfrak{r'}}^\vee$ and $S_{\Lambda_\mathfrak{r'}}^\vee / G$ are free, one has that $S_{\Lambda_\mathfrak{r}}^\vee / G$ is free, which implies that $\overline{S_\sigma} := \big\{\overline{x} \in S_{\Lambda_\mathfrak{r}}^\vee / G\; \big| \; x \in S_\sigma\big\}$ is an affine semigroup. Note that $\varphi(\F[S_\sigma]) = \F[\overline{S_\sigma}]$ and define
\[
\begin{array}{cccc}
\psi: & \F[S_\sigma] & \longrightarrow & \F[\overline{S_\sigma}]\\
 & a & \longmapsto & \varphi(a),
\end{array}
\]
$\mathfrak{s} := \psi_*(\mathfrak{r})$ and $\mathfrak{s'} := \psi_*(\mathfrak{r'})$. Let $\widetilde{M}$ be the group $S_{\Lambda_\mathfrak{r}}^\vee / G$ and $\widetilde{N}:= \textup{Hom}(\widetilde{M}, \Z)$ its dual. Let $y_1, \dotsc, y_w \in M$ be generators of $S_\sigma$ as a monoid and $\rho:= \textup{Cone}(\overline{y_1}, \dotsc, \overline{y_w})^\vee \subseteq \widetilde{N}_\R$.  Note that $\overline{S_\sigma} = S_\rho$ and $\textup{congker}(\psi) \subseteq \mathfrak{r}$, which implies $\mathfrak{s}, \mathfrak{s'} \in \SCong_\F \F[S_\rho]$. As $H_\mathfrak{s} = H_\mathfrak{r} / G$ and $H_\mathfrak{s'} = H_\mathfrak{r'} / G = \{0\}$, it is enough to show that
\[
\mathfrak{N}(\mathfrak{s'}) - \mathfrak{T}(\mathfrak{s}) = 1.
\]

Define $m:= \textup{rk}(\widetilde{M})$, $d:= \mathfrak{T}(\mathfrak{s})$ and $e:= m - \mathfrak{N}(\mathfrak{s'})$. Let $\{u_1, \dotsc, u_m\}$ be a $\Z$-basis of $\widetilde{M}$ such that $\{u_1, \dotsc, u_d\}$ is a $\Z$-basis of $H_\mathfrak{s}$, and $\beta_i \in \F^\times$ with $\chi^{u_i} \sim_{\mathfrak{a}_\mathfrak{s}} \beta_i$ for $1\leq i \leq d$. Analogous to what is done above, there exists a surjective morphism of $\F$-algebras 
\[
\begin{array}{cccl}
\widetilde{\varphi}: & \F[\widetilde{M}] & \longrightarrow &\F[\widetilde{M} / H_\mathfrak{s}]\\
 & & \\
&\chi^{u_i} &\longmapsto & \smash{\left\{\begin{array}{ll}
      \beta_i &\text{if } 1\leq i \leq d\\
    {}\chi^{\overline{u_i}} &\text{if } d+1 \leq i \leq m
    \end{array}\right.}
\end{array}
\]
\\noindent such that $\congker (\widetilde{\varphi}) = \mathfrak{a}_\mathfrak{s}$ and $\widetilde{\varphi}(\F[S_\rho]) = \F[\overline{S_\rho}]$, where $\overline{S_\rho}$ is the affine semigroup $\{ \overline{y} \in \widetilde{M} / H_\mathfrak{s} \mid y \in S_\rho\}$. As the composition 
\[
\F[S_{\tau_\mathfrak{s'}}^\vee] \longhookrightarrow \F[S_\rho] \relbar\joinrel\twoheadrightarrow \F[S_\rho] / \mathfrak{s} \relbar\joinrel\twoheadrightarrow \F[S_\rho] / \mathfrak{s'}
\]
is an isomorphism, one has the commutative diagram
\[
\begin{tikzcd}
\F[S_{\tau_\mathfrak{s'}}^\vee]  \arrow[r, hook] \arrow[d, hook] & \F[S_\rho] / \mathfrak{s} \arrow[d, hook]\\
\F[S_{\Lambda_\mathfrak{s'}}^\vee]  \arrow[r, hook]& \F[\widetilde{M}] / \mathfrak{a}_\mathfrak{s} \simeq \F[\widetilde{M} / H_\mathfrak{s}],
\end{tikzcd}
\]
which shows that $S_{\Lambda_\mathfrak{s'}}^\vee \cap H_\mathfrak{s} = \{0\}$. By \autoref{torsion-free quotient by sum}, there exists a $\Z$-basis $\{c_1, \dotsc, c_m\}$ of $\widetilde{M}$ such that $c_i = u_i$ for $1 \leq i \leq d$ and $\{c_{d+1}, \dotsc, c_{d+e}\}$ is a $\Z$-basis of $S_{\Lambda_\mathfrak{s'}}^\vee$. Therefore $d+e \leq m$, \textit{i.e.}, $\mathfrak{T}(\mathfrak{s}) \leq \mathfrak{N}(\mathfrak{s'})$.  Next we show that $\mathfrak{T}(\mathfrak{s}) = \mathfrak{N}(\mathfrak{s'}) - 1$.

Suppose that $\mathfrak{T}(\mathfrak{s}) = \mathfrak{N}(\mathfrak{s'})$. Then the map $\F[S_{\Lambda_\mathfrak{s'}}^\vee] \rightarrow \F[\widetilde{M} / H_\mathfrak{s}]$ is surjective. Let $Q$ be the submonoid of $\widetilde{M}$ generated by $S_\rho$ and $S_{\Lambda_\mathfrak{s'}}^\vee$. As $\F[Q]$ is a localization of $\F[S_\rho]$ by elements not in $I_\mathfrak{s'}$ or $I_\mathfrak{s} = \{0\}$, one has that $q_*(\mathfrak{s})$ and $q_*(\mathfrak{s'})$ are $\F$-congruences on $\F[Q]$, where $q: \F[S_\rho] \rightarrow \F[Q]$ is the inclusion map. Note that $\textup{congker}(\widetilde{\varphi}|_{\F[Q]}) = q_*(\mathfrak{s})$. As the diagram
\[
\begin{tikzcd}
\F[Q] \arrow[rdd, "\widetilde{\varphi}"] \arrow[dd, two heads] &\\
&&\\
\F[Q] / q_*(\mathfrak{s}) \arrow[dd, two heads] \arrow[r, hook]&  \F[\widetilde{M} / H_\mathfrak{s}] \\
&&\\
\F[Q] / q_*(\mathfrak{s'}) \simeq \F[S_{\Lambda_\mathfrak{s'}}^\vee] \arrow[uur, two heads] \arrow[r, hook] & \F[Q] \arrow[uu, "\widetilde{\varphi}", swap]
\end{tikzcd}
\]
commutes, one has $q_*(\mathfrak{s'}) = \textup{congker}(\widetilde{\varphi}|_{\F[Q]}) = q_*(\mathfrak{s})$, which is a contradiction.

Suppose that $\mathfrak{T}(\mathfrak{s}) \leq \mathfrak{N}(\mathfrak{s'}) - 2$. Let $\widetilde{\psi} := \widetilde{\varphi}|_{\F[S_\rho]}$ and $\mathfrak{u} := \langle \overline{c_m} \sim 1 \rangle$ on $\F[\widetilde{M} / H_\mathfrak{s}]$. As $\mathfrak{u}$ is an $\F$-congruence, one has that $\mathfrak{v} := \widetilde{\psi}^*(\mathfrak{u})$ is an $\F$-congruence on $\F[S_\rho]$ satisfying $\mathfrak{s} \subsetneq \mathfrak{v} \subsetneq \mathfrak{s'}$, which is a contradiction.

The topological dimension of $\SCong_\F \F[S_\sigma]$ is $n$ because for any maximal congruence $\mathfrak{c}$ in $\SCong_\F \F[S_\sigma]$, one has $\F[S_\sigma] / \mathfrak{c} \simeq \F$ and 
\[
\mathfrak{T}(\mathfrak{c}) = \textup{rk}(H_\mathfrak{c}) = \textup{rk}(S_{\Lambda_\mathfrak{c}}^\vee) = \dim \tau_\mathfrak{c} = n - \mathfrak{N}(\mathfrak{c}).
\]
\end{proof}

\begin{df}
Let $k$ be a pointed monoid and $X$ a $k$-scheme. We define the \textit{$k$-dimension} of $X$ as the Krull dimension of the topological space $\SCong_k X$, and denote it by $\textup{dim}_k X$.
\end{df}

\begin{thm}
\label{thm-toric dimension}
Let $\F$ be an algebraically closed pointed group, $\Sigma$ a fan and 
\[
X := X(\Sigma) \times_{\F_1}\MSpec \F
\]
the toric $\F$-scheme associated to $\Sigma$. Then $\SCong_\F X$ is a catenary space of dimension $\dim_\F X = \dim \Sigma$.
\end{thm}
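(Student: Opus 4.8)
The plan is to reduce everything to the affine statement \autoref{dimension - affine case} and then to argue that both ``catenary'' and ``has Krull dimension $n$'' are properties that can be checked on an open cover. We may assume $\Sigma \neq \emptyset$ (otherwise $X$ is empty). Recall that $X = X(\Sigma)\times_{\F_1}\MSpec\F$ carries the affine open covering $\{\MSpec\F[S_\sigma]\}_{\sigma\in\Sigma}$, so by \autoref{strong covering} the family $\{\SCong_\F\F[S_\sigma]\}_{\sigma\in\Sigma}$ is an open covering of $\SCong_\F X$. By \autoref{dimension - affine case} each member of this covering is a catenary space of Krull dimension $n = \dim\Sigma$, the rank of $N$, and this value does not depend on $\sigma$ because every $\sigma$ is strongly convex. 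It therefore remains to transfer these two properties from the covering to $\SCong_\F X$.

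For the dimension, first I would record the purely topological fact that for any space $Y$ with open covering $\{Y_i\}$ one has $\dim Y = \sup_i \dim Y_i$. The inequality $\dim Y \geq \dim Y_i$ follows by taking closures in $Y$ of a chain of irreducible closed subsets of $Y_i$: closures of irreducibles are irreducible, and they stay distinct because intersecting back with $Y_i$ recovers the original chain. For the reverse inequality, given a chain $Z_0 \subsetneq \cdots \subsetneq Z_d$ of irreducible closed subsets of $Y$, I would choose $Y_i$ meeting the smallest member $Z_0$; then each $Z_j \cap Y_i$ is a nonempty open subset of the irreducible $Z_j$, hence irreducible and dense in $Z_j$, so the $Z_j \cap Y_i$ form a strictly increasing chain in $Y_i$ and $d \leq \dim Y_i$. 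Applying this to $\SCong_\F X$ and the covering above gives $\dim \SCong_\F X = \sup_\sigma \dim \SCong_\F\F[S_\sigma] = n = \dim\Sigma$, which is the required value $\dim_\F X$.

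For catenarity, the key observation is the same closure/restriction correspondence. Given irreducible closed subsets $Y \subseteq Y'$ of $\SCong_\F X$, I would fix one open $U = \SCong_\F\F[S_\sigma]$ of the covering meeting $Y$. Intersection with $U$ carries a chain of irreducible closed subsets of $\SCong_\F X$ lying between $Y$ and $Y'$ to a strictly increasing chain between $Y \cap U$ and $Y' \cap U$, using $Y \cap U \neq \emptyset$ and the density argument above, and taking closures in $\SCong_\F X$ is inverse to this; so the correspondence is an inclusion- and length-preserving bijection. It moreover respects maximality in both directions, since a refinement on one side closes up, respectively restricts, to a refinement on the other. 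As $U$ is catenary, all maximal chains between $Y \cap U$ and $Y' \cap U$ have the same finite length; transporting through the bijection shows the same for all maximal chains between $Y$ and $Y'$. Hence $\SCong_\F X$ is catenary.

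The substantive mathematical content is entirely in the affine case \autoref{dimension - affine case}, which is already proved; the main point to be careful about here is the gluing, namely verifying that restriction to a single affine piece and closure in $\SCong_\F X$ really do set up an inclusion- and length-preserving bijection between maximal chains, and in particular that maximality is preserved in both directions. Once that topological bookkeeping is in place, both assertions follow formally from the affine case.
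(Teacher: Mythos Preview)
Your proposal is correct and follows the same route as the paper: cover $\SCong_\F X$ by the affine pieces $\SCong_\F\F[S_\sigma]$ via \autoref{strong covering}, apply \autoref{dimension - affine case} to each piece, and then use that Krull dimension and catenarity are local on open covers. The only difference is cosmetic: the paper cites \cite[Lemma~5.11.5]{stacks-project} for the catenary step, whereas you spell out the closure/restriction bijection between chains explicitly; your argument is exactly a proof of that lemma.
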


\begin{proof}
By \autoref{strong covering}, $\{ \SCong_\F \F[S_\sigma] \mid \sigma \in \Sigma\}$ is an open covering of $\SCong_\F X$. By \autoref{dimension - affine case}, each $\SCong_\F \F[S_\sigma]$ is catenary of dimension $n$, which implies $\dim_\F X = n$. By \cite[Lemma 5.11.5]{stacks-project}, $X$ is catenary. 
\end{proof}

\section{Examples: The torus and affine space}

Throughout this section, we fix an algebraically closed pointed group $\F$ and give an explicit description of the strong congruence space of the torus and affine space over $\F$.

\begin{notation}
\textup{For $y = (y_1, \dotsc, y_n) \in \Z^n$, we use $X^y$ to denote the monomial $X_1^{y_1} \cdots X_n^{y_n}$.}
\end{notation}

\subsection{The torus} The \textit{$n$-torus over $\F$} is the affine monoid scheme 
\[
\T_\F^n := \MSpec \F[X_1^\pm, \dotsc, X_n^\pm].
\]
Let $\{z_1, \dotsc, z_c\}$ be part of a $\Z$-basis of $\Z^n$ and $\lambda_1, \dotsc, \lambda_c \in \F^\times$. Define the congruence
\[
\mathfrak{c} := \langle X^{z_i} \sim \lambda_i \mid i = 1, \dotsc, c \rangle
\]
on $\F[X_1^\pm, \dotsc, X_n^\pm]$. Let $z_{c+1}, \dotsc, z_n \in \Z^n$ such that $\{z_1, \dotsc, z_n\}$ is a $\Z$-basis. The map
 \[
  \begin{array}{cccc}
  \varphi: & \F[X_1^\pm, \dotsc, X_n^\pm] & \longrightarrow & \F[X_1^\pm, \dotsc, X_n^\pm] \\
           & X_i & \longmapsto & X^{z_i}
  \end{array}
 \]
defines an isomorphism of $\F$-algebras. As
\[
\varphi^*(\mathfrak{c}) = \langle X_i \sim \lambda_i \mid i = 1, \dotsc, c \rangle,
\]
one has $\F[X_1^\pm, \dotsc, X_n^\pm] / \mathfrak{c} \simeq \F[X_1^\pm, \dotsc, X_n^\pm] / \varphi^*(\mathfrak{c}) \simeq \F[X_{c+1}^\pm, \dotsc, X_n^\pm]$. Therefore $\mathfrak{c}$ is an $\F$-congruence. 

The next result shows that every $\F$-congruence on $\F[X_1^\pm, \dotsc, X_n^\pm]$ has this form.

\begin{prop}
\label{characterization torus}
Let $\mathfrak{c} \in \SCong_\F \T_\F^n$. Then there exists an integer $0\leq c \leq n$, constants $\lambda_1, \dotsc, \lambda_c \in \F^\times$ and $z_1, \dotsc, z_c \in \Z^n$ part of a $\Z$-basis such that 
\[
\mathfrak{c} = \langle X^{z_i} \sim \lambda_i \mid i = 1, \dotsc, c \rangle.
\]
\end{prop}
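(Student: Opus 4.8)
The plan is to exploit that $\F[X_1^\pm,\dots,X_n^\pm]$ is a \emph{pointed group}: every nonzero monomial $\lambda X^m$ is invertible, so its group of units is $G := \F^\times \times \Z^n$ (with $(\lambda,m)$ corresponding to $\lambda X^m$) and its only proper ideal is $\{0\}$. First I would record that any prime congruence $\mathfrak c$ on a pointed group has trivial null ideal: $I_{\mathfrak c}$ is a prime ideal, hence equals $\{0\}$, so no nonzero element is $\mathfrak c$-equivalent to $0$. Consequently $\mathfrak c$ restricts to a congruence on the group $G$, and congruences on an abelian group are the same as subgroups; thus $\mathfrak c$ is completely determined by $H := \{g \in G \mid g \sim_{\mathfrak c} 1\}$, and $\F[X_1^\pm,\dots,X_n^\pm]/\mathfrak c = (G/H)\cup\{0\}$ as a pointed group.

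Next I would use the two defining properties of an $\F$-congruence to pin down $H$. Let $p\colon G \to \Z^n$ be the projection and $L := p(H)$. Injectivity of $\F \to \F[X_1^\pm,\dots,X_n^\pm]/\mathfrak c$ is exactly the statement $H \cap (\F^\times \times \{0\}) = \{1\}$, so $p|_H\colon H \to L$ is an isomorphism and $H = \{(\phi(z),z) \mid z \in L\}$ is the graph of a homomorphism $\phi\colon L \to \F^\times$. In particular there is a short exact sequence $1 \to \F^\times \to G/H \to \Z^n/L \to 1$.

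The heart of the argument, and the step I expect to be the main obstacle, is to show that strongness forces $\Z^n/L$ to be torsion-free. By \autoref{intrinsic domains}, the pointed group $(G/H)\cup\{0\}$ is a domain if and only if $\#\{x \in G/H \mid x^\ell = \alpha\} \le \ell$ for all $\alpha$ and all $\ell \ge 1$; since such a root set, when nonempty, is a coset of the $\ell$-torsion subgroup $(G/H)[\ell]$, and $\alpha = 1$ always admits a root, this is equivalent to $\#(G/H)[\ell] \le \ell$ for all $\ell$. Suppose $\Z^n/L$ had an element $\bar m$ of order $k>1$; lift it to $m \in \Z^n$ with $km \in L$. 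Writing $\bar g$ for the class of $X^m$, one gets $\bar g^{k} = \alpha$ for $\alpha := \phi(km)^{-1} \in \F^\times$. Because $\F$ is algebraically closed (\autoref{algebraically closed}), $\alpha$ has a $k$-th root $\nu \in \F^\times$ and $\#\F^\times[k] = k$; but $\bar g\,\nu^{-1}$ also lies in $(G/H)[k]$ and is not in $\F^\times$ (its image in $\Z^n/L$ is $\bar m \neq 0$), so $(G/H)[k]$ strictly contains $\F^\times[k]$ and has more than $k$ elements, contradicting that $(G/H)\cup\{0\}$ is a domain. Hence $\Z^n/L$ is torsion-free.

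Finally, torsion-freeness of $\Z^n/L$ makes $L$ a saturated subgroup, so I can choose a $\Z$-basis $z_1,\dots,z_c$ of $L$ that extends to a $\Z$-basis of $\Z^n$; this supplies the required ``part of a basis.'' Setting $\lambda_i := \phi(z_i)^{-1}$, the subgroup $H$, being the graph of the homomorphism $\phi$ over the free group $L$, is generated by the elements $(\phi(z_i),z_i)$, equivalently by the relations $X^{z_i} \sim \lambda_i$. It then remains to check that the congruence generated by these finitely many relations has exactly $H$ as its associated subgroup (its null ideal is again $\{0\}$ because no generator involves $0$), whence $\mathfrak c = \langle X^{z_i} \sim \lambda_i \mid i = 1,\dots,c\rangle$; this last bookkeeping matches the computation preceding the statement, where congruences of this shape were already shown to be $\F$-congruences via the change of variables $X_i \mapsto X^{z_i}$.
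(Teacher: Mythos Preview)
Your proof is correct and follows essentially the same approach as the paper: both reduce to the subgroup $L=\{z\in\Z^n\mid X^z\sim_{\mathfrak c}\lambda\text{ for some }\lambda\in\F^\times\}$, use algebraic closedness of $\F$ together with the root-counting bound for domains to prove $\Z^n/L$ is torsion-free, then pick a basis of $L$ extendable to $\Z^n$ and check that the relations $X^{z_i}\sim\lambda_i$ generate $\mathfrak c$. Your packaging is slightly more group-theoretic (working with $H\subset\F^\times\times\Z^n$ as the graph of a homomorphism and counting $k$-torsion in $G/H$), whereas the paper argues directly that $[X^z]$ must equal one of the $m$ roots $\{[\zeta^i\alpha]\}$ coming from $\F$, but the content is the same.
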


\begin{proof}
As $\F[X_1^\pm, \dotsc, X_n^\pm]$ is a pointed group, one has $I_\mathfrak{c} = \{0\}$. Let 
\[H := \{z \in \Z^n \mid X^z \sim_{\mathfrak{c}} \lambda \text{ for some } \lambda \in \F^\times\}. 
\]
Note that $H$ is a subgroup of $\Z^n$. We show that $\Z^n / H$ is torsion-free. Let $z \in \Z^n$ and $m > 0$ such that $mz \in H$. Then there exists $\lambda \in \F^\times$ such that $(X^z)^m \sim_\mathfrak{c} \lambda$. Let $\alpha \in \F$ such that $\alpha^m = \lambda$ and $\zeta \in \F$ be a primitive $m^{\textup{th}}$-root of $1$, \textit{i.e.}, a generator of the group $\mu_m := \{w \in \F \mid w^m = 1\}$. As $\mathfrak{c}$ is an $\F$-congruence, the sets
\[
\big\{x \in \F[X_1^\pm, \dotsc, X_n^\pm] / \mathfrak{c} \; \big| \; x^m = \lambda\big\} \quad \text{and} \quad \big\{[\zeta^i \alpha] \; \big| \; i = 0, \dotsc, m-1\big\}
\]
coincide. As $[X^z]^m = \lambda$, there exists $0 \leq j \leq m-1$ such that $X^z \sim_\mathfrak{c} \zeta^j \alpha$. Thus $z \in H$. Then there exists a $\Z$-basis $\{z_1, \dotsc, z_c\}$ of $H$ that can be completed to a $\Z$-basis of $\Z^n$. 

As $\mathfrak{c}$ is an $\F$-congruence and $\F$ is a pointed group, there exist unique $\lambda_1, \dotsc, \lambda_c \in \F^\times$ such that $X^{z_i} \sim_\mathfrak{c} \lambda_i$ for $i = 1, \dotsc, c$. Next we show that $\mathfrak{c}$ is equal to
\[
\mathfrak{d} := \langle X^{z_i} \sim \lambda_i \mid i = 1, \dotsc, c \rangle.
\]
Let $g, h \in (\F[X_1^\pm, \dotsc, X_n^\pm])^\times$ such that $g \sim_\mathfrak{c} h$. Then $gh^{-1} \sim_\mathfrak{c} 1$. Let $\omega \in \F^\times$ and $z \in \Z^n$ such that $gh^{-1} = \omega X^z$. As $X^z \sim_\mathfrak{c} \omega^{-1}$, one has $X^z \in H$. Thus there exist $m_1, \dotsc, m_c \in \Z$ such that $z = \sum m_i z_i$, which implies $\omega^{-1} \sim_\mathfrak{c} X^z \sim_\mathfrak{c} \prod \lambda_i^{m_i}$. As $\mathfrak{c}$ is an $\F$-congruence, one has $\omega^{-1} = \prod \lambda_i^{m_i}$. Note that $X^z \sim_\mathfrak{d} \prod \lambda_i^{m_i}$, which implies
\[
g \sim_\mathfrak{d} gh^{-1} h \sim_\mathfrak{d} \omega X^z h \sim_\mathfrak{d} \omega \omega^{-1} h \sim_\mathfrak{d} h.
\]
Therefore $g \sim_\mathfrak{d} h$.
\end{proof}

\subsection{Affine space}
The \textit{affine $n$-space over $\F$} is the affine monoid scheme 
\[
\A_\F^n := \MSpec \F[X_1, \dotsc, X_n].
\]
We show the analogue of \autoref{characterization torus} in the case of the affine space. For a subset $L \subseteq \{1, \dotsc, n\}$, let $\mathfrak{p}_L$ be the prime ideal $\langle X_i \mid i \in L \rangle \in \A_\F^n$. Note that 
\[
\F[X_1, \dotsc, X_n] / \mathfrak{p}_L \simeq \F[X_i \mid i \notin L]. 
\]
Thus, for an $\F$-congruence $\mathfrak{d}$ on $\F[X_i^\pm \mid i \notin L] = \textup{Frac}(\F[X_i \mid i \notin L])$, one has that $\varphi^*(\mathfrak{d})$ is an $\F$-congruence on $\F[X_1, \dotsc, X_n]$, where $\varphi$ is the composition
\[
\F[X_1, \dotsc, X_n]  \relbar\joinrel\twoheadrightarrow \F[X_1, \dotsc, X_n] / \mathfrak{p}_L \simeq \F[X_i \mid i \notin L] \longhookrightarrow \F[X_i^\pm \mid i \notin L].
\]

By \autoref{characterization torus}, $\mathfrak{d} = \langle X^{z_i} \sim \lambda_i \mid i = 1, \dotsc, c \rangle$, where $\{z_1, \dotsc, z_c\} \subset \Z^{n-\#L}$ is part of a $\Z$-basis, which implies that 
\[
\varphi^*(\mathfrak{d}) = \langle \{X_i \sim 0 \mid i \in L \} \cup \{a \sim b \mid a, b \in \F[X_i \mid i \notin L] \text{ with } a \sim_\mathfrak{d} b\} \rangle.
\]
The next result shows that every $\mathfrak{c} \in \SCong_\F \A_\F^n$ has this form.

\begin{prop}
\label{characterization affine}
Let $\mathfrak{c} \in \SCong_\F \A_\F^n$. Then there exists a subset $L \subseteq \{1, \dotsc, n\}$, an integer $0\leq c \leq n-\#L$, constants $\lambda_1, \dotsc, \lambda_c \in \F^\times$ and $\{z_1, \dotsc, z_c\} \subseteq \Z^{n-\#L}$ part of a $\Z$-basis such that
\[
\mathfrak{c} = \langle \{X_i \sim 0 \mid i \in L \} \cup \{a \sim b \mid a, b \in \F[X_i \mid i \notin L] \text{ with } a \sim_\mathfrak{d} b\} \rangle,
\]
where $\mathfrak{d}$ is the congruence $\langle X^{z_i} \sim \lambda_i \mid i = 1, \dotsc, c \rangle$ on $\F[X_i^\pm \mid i \notin L]$.
\end{prop}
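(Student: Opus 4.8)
The plan is to reduce the affine case to the torus case (\autoref{characterization torus}) in two steps: first strip off the null ideal of $\mathfrak{c}$, which pins down the set $L$, and then pass to the group of fractions of the surviving coordinate monoid, which is precisely a torus. Since \autoref{characterization torus} already carries the essential content (the torsion-freeness argument requiring $\F$ algebraically closed), the work here is mainly to organize two bijections established earlier in the text.

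First I would identify the null ideal. As $\mathfrak{c}$ is a strong $\F$-congruence, $I_\mathfrak{c}$ is an $\F$-prime of $\F[X_1, \dotsc, X_n]$. Writing $\F[X_1, \dotsc, X_n] = \F[S_\sigma]$ for the self-dual cone with $\sigma^\vee = \R_{\geq 0}^n$ (so that $S_\sigma = \N^n$), \autoref{bijection primes-faces} identifies $\MSpec_\F \F[S_\sigma]$, inclusion-reversingly, with the faces of $\sigma^\vee$. The faces of $\R_{\geq 0}^n$ are exactly the coordinate faces indexed by subsets $L \subseteq \{1, \dotsc, n\}$, so $I_\mathfrak{c} = \mathfrak{p}_L$ for $L := \{i \mid X_i \sim_\mathfrak{c} 0\}$. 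I would then note that $X_i \sim_\mathfrak{c} 0$ for $i \in L$ forces the congruence $\langle X_i \sim 0 \mid i \in L \rangle$ to lie inside $\mathfrak{c}$; this congruence is the ideal-congruence attached to $\mathfrak{p}_L$, with quotient $\F[X_i \mid i \notin L]$ as recorded just before the statement.

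Next I would factor $\mathfrak{c}$ through this quotient and then localize. The lemma giving the bijection between $\{\mathfrak{c} \in \SCong_\F A \mid \langle X_i \sim 0 \mid i \in L\rangle \subseteq \mathfrak{c}\}$ and $\SCong_\F\!\big(\F[X_i \mid i \notin L]\big)$ transports $\mathfrak{c}$ to a strong $\F$-congruence $\overline{\mathfrak{c}} := \pi_*(\mathfrak{c})$ on $\F[X_i \mid i \notin L]$, now with trivial null ideal. Since no remaining variable lies in $I_{\overline{\mathfrak{c}}} = \{0\}$, the multiplicative set $S := \{X^m \mid m \in \N^{\,n - \#L}\}$ meets $I_{\overline{\mathfrak{c}}}$ only outside itself, so \autoref{k-localization} yields a strong $\F$-congruence $\mathfrak{d} := S^{-1}\overline{\mathfrak{c}}$ on the torus $\F[X_i^\pm \mid i \notin L] = S^{-1}\F[X_i \mid i \notin L]$. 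Both steps are bijections with inverses $\pi^*$ and $\iota_S^*$, so for the composition $\varphi$ (projection followed by localization, as in the paragraph preceding the statement) one gets $\varphi^*(\mathfrak{d}) = \pi^*\big(\iota_S^*(\mathfrak{d})\big) = \pi^*(\overline{\mathfrak{c}}) = \mathfrak{c}$.

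Finally I would invoke \autoref{characterization torus} for $\mathfrak{d}$, producing $0 \leq c \leq n - \#L$, constants $\lambda_1, \dotsc, \lambda_c \in \F^\times$, and $z_1, \dotsc, z_c \in \Z^{\,n - \#L}$ part of a $\Z$-basis with $\mathfrak{d} = \langle X^{z_i} \sim \lambda_i \mid i = 1, \dotsc, c\rangle$. The explicit computation of $\varphi^*(\mathfrak{d})$ carried out in the discussion before the statement then delivers exactly the asserted presentation of $\mathfrak{c}$. I expect the only genuinely substantive point to be the first step, namely confirming that $I_\mathfrak{c}$ is forced to be one of the coordinate primes $\mathfrak{p}_L$ and that $\langle X_i \sim 0 \mid i \in L\rangle$ coincides with the ideal-congruence of $\mathfrak{p}_L$; everything afterward is a formal splicing of the bijections from \autoref{k-localization}, the quotient lemma, and \autoref{characterization torus}.
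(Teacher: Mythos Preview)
Your proposal is correct and follows essentially the same route as the paper: strip off the null ideal to get $L$, push $\mathfrak{c}$ forward along the quotient $\pi$, localize via $\iota$ to land in the torus, apply \autoref{characterization torus}, and pull back. The only cosmetic difference is that the paper identifies $I_\mathfrak{c}=\mathfrak{p}_L$ directly from the elementary fact that every prime of $\F[X_1,\dotsc,X_n]$ is a coordinate prime, rather than invoking \autoref{bijection primes-faces}.
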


\begin{proof}
As $I_\mathfrak{c} \in \A_\F^n$, there exists $L \subseteq \{1, \dotsc, n\}$ such that $I_\mathfrak{c} = \mathfrak{p}_L$. Let 
\[\pi: \F[X_1, \dotsc, X_n]  \relbar\joinrel\twoheadrightarrow \F[X_1, \dotsc, X_n] / \mathfrak{p}_L \simeq \F[X_i \mid i \notin L]
\]
be the projection, 
\[
\iota: \F[X_i \mid i \notin L] \longhookrightarrow \F[X_i^\pm \mid i \notin L]
\]
the localization map, $\mathfrak{r}:= \pi_*(\mathfrak{c})$ and $\mathfrak{d} := \iota_*(\pi_*(\mathfrak{c}))$. Note that $\mathfrak{r} \in \SCong_\F \A_\F^{n-\#L}$ and $\mathfrak{c} = \pi^*(\mathfrak{r})$. As $\mathfrak{r}$ satisfies $I_\mathfrak{r} = \{0\}$, by \autoref{k-localization}, one has $\mathfrak{d} \in \SCong_\F \T_\F^{n-\#L}$ and $\mathfrak{r} = \iota^*(\mathfrak{d})$. Thus  the result follows from the discussion above, because $\mathfrak{c} = (\iota \circ \pi)^*(\mathfrak{d})$.
\end{proof}


\bibliographystyle{alpha}
\bibliography{dimension}

\end{document}